\newcommand{\ca}{\c c\~a}
\newcommand{\RR}{{\mathbf R}}
\newcommand{\Rn}{\RR^n}
\newcommand{\xy}{(x,y)}
\newcommand{\va}{\varphi}
\newcommand{\mm}{<\!\!<}
\DeclareMathOperator{\Fix}{Fix}
\DeclareMathOperator{\tr}{tr}
\newcounter{lixo}
\newtheorem{theorem}{Theorem}[section]
\newtheorem{lemma}[theorem]{Lemma}
\newtheorem{proposition}[theorem]{Proposition}
\newtheorem{corollary}[theorem]{Corollary}
\newtheorem{definition}[theorem]{Definition}
\theoremstyle{definition}
\title[Synchrony and canards in coupled FitzHugh--Nagumo ]{ Synchrony and canards in \\ two coupled FitzHugh--Nagumo equations\\ \today}  
\author[B.F.F. Gon\c calves]{Bruno F. F. Gon\c calves}
\address{ B.F.F. Gon\c calves --- Centro de Matem\'{a}tica da
   Universidade do Porto   \\
   Rua do Campo Alegre, 687\\
   4169-007 Porto\\
Portugal}
\email{B.F.F. Gon\c calves --- brunoffg9@gmail.com}  
\author[I.S. Labouriau]{Isabel S. Labouriau}
\address{I.S. Labouriau --- Centro de Matem\'{a}tica da
   Universidade do Porto   \\
   Rua do Campo Alegre, 687\\
   4169-007 Porto\\
Portugal}
\email{I.S. Labouriau --- islabour@fc.up.pt}  
\author[A.A. P. Rodrigues]{Alexandre A. P. Rodrigues}
\address{A.A. P. Rodrigues --- Lisbon School of Economics \& Management\\
Centro de Matem\'atica Aplicada \`a\ Previs\~ao e Decis\~ao Econ\'omica\\
 Rua do Quelhas 6\\ 1200-781, Lisboa\\
  Portugal}
\email{A.A. P. Rodrigues --- arodrigues@iseg.ulisboa.pt}
\begin{document}

\begin{abstract}
 We describe the fast-slow dynamics of two FitzHugh--Nagumo equations coupled symmetrically through the slow equations.
   We use symmetry arguments to find a non-empty open set of parameter values for which the two equations synchronise, and another set with antisynchrony --- where the solution of one equation is minus the solution of the other.
   By combining the dynamics within the synchrony  and antisynchrony subspaces, we  also obtain bistability --- where these two types of solution coexist as hyperbolic attractors. They persist under small perturbation of the parameters.
   Canards are shown to give rise to mixed-mode oscillations.
   They also  initiate  small amplitude transient oscillations before the onset of large amplitude relaxation oscillations. We also discuss briefly the effect of asymmetric coupling, with periodic forcing of one of the equations by the other. We illustrate our results with numerical simulations.
\end{abstract}

\maketitle

\begin{center}
   \parbox[t][][t]{.3\linewidth}{
   Bruno F. F. Gon\c calves\\
   {\em corresponding author}\\
   Centro de Matem\'{a}tica\\
      Universidade do Porto   \\
      Rua do Campo Alegre, 687\\
      4169-007 Porto\\
   Portugal}
   \ 
   \parbox[t][][t]{.3\linewidth}{
   Isabel S. Labouriau\\
   Centro de Matem\'{a}tica \\
      Universidade do Porto   \\
      Rua do Campo Alegre, 687\\
      4169-007 Porto\\
   Portugal}
   \ 
   \parbox[t][][t]{.3\linewidth}{
   Alexandre A. P. Rodrigues\\
   Lisbon School of Economics \& Management\\
   Centro de Matem\'atica Aplicada\\
   \`a\ Previs\~ao e Decis\~ao Econ\'omica\\
   Rua do Quelhas 6\\ 1200-781, Lisboa\\
   Portugal}
\end{center}
\bigbreak

\textbf{Keywords: }{FitzHugh--Nagumo; Symmetric Coupling; Synchrony;  Canards; Mixed-mode Oscillations.}

\textbf{2020 Mathematics Subject Classification. Primary:} 34E15
 
\textbf{Secondary:} 34C60, 34E17,  34C15, 37G15

\section{Introduction}\label{sec:introduction}
The FitzHugh--Nagumo equations (FHN)  have  been intensively studied in their own right for purely mathematical reasons, as they provide a very simple example of equations that show complex and varied dynamics. 
This is the point of view of the present article.
The equations have been introduced by FitzHugh \cite{Fitzhugh1,Fitzhugh2} as a simplified model for nerve impulse, but we do {\bf not} address this aspect here. 
They have also been presented by Nagumo {\sl et al.} \cite{Nagumo} as a model for experiments in an electrical circuit.

Several different formulations of  the FitzHugh--Nagumo equations\footnote{Nagumo {\sl et al.} \cite{Nagumo} also quote it as Bonhoeffer--van der Pol equations.} \cite{Fitzhugh1,Fitzhugh2,Nagumo} occur in the literature both as reaction-diffusion equations and as ordinary differential equations.
Reviews of the behaviour of a single FHN can be found  for instance in Cebri\'an-Lacasa {\sl et al.} \cite{CPRG2024} focusing on mathematical aspects of the dynamics and, with emphasis on neurobiology,  
in Dmitrichev {\sl et al.} \cite{Dmitrichev}.
Here we see it as the following fast-slow system of ordinary differential equations
\begin{equation}\label{eq:FHN}
   \left\{
   \begin{array}{rcl}
      \varepsilon\dot x&=&4x-x^3-y=f\xy\\
      \dot y&=&x-by-c=g\xy
   \end{array}
   \right.
   \qquad b,c\in \RR\quad 0<\varepsilon\mm 1
\end{equation}  
where $x$ is the fast variable and $y$ is the slow one.
Its dynamics was described in Gon\c calves {\sl et al.} \cite{GLR2024}.
In this article we explore the consequences of coupling two identical FHN.
 
Coupling two FHN has been addressed in many places in the literature. 
For instance, the effect of having the fast variable of a periodic solution of a FHN coupled to the fast equation of another FHN and thus forcing it has been explored numerically by several authors.
It was described as experiments in an electrical circuit by Binczack {\sl et al.} \cite{BinczakEtal2006}, through self-coupling by Desroches {\sl et al.} \cite{DKO2008} and also numerically by Hoff {\sl et al.} \cite{HoffEtAl2014}.
Other authors have used the slow coordinate of a periodic solution of one FHN to force the fast coordinate of another, as in Doss-Bachelet {\sl et al.} \cite{DFP2003} and Krupa {\sl et al.} \cite{Krupa2012}, the latter in a model with three time scales. 
Dmitrichev {\sl et al.} \cite{Dmitrichev} have used the fast coordinate of one FHN to force the slow coordinate of another FHN.
 
Coupling FHN symmetrically through the fast equations has been addressed both numerically and analytically in Pedersen {\sl et al.} \cite{PedersenEtal2022} and Kristiansen and Pedersen \cite{KristiansenPedersen}. 
The synchronisation of FHN with different parameter values coupled in this way was also treated analytically in Plotnikov and Fradkov \cite{Plotnikov}. 
Their dynamics was described numerically by Hoff {\sl et al.} \cite{HoffEtAl2014}.
Both symmetric and asymmetric linear coupling of two FHN through the fast equations was studied analytically by Campbell and Waite \cite{CampbellWaite} and numerically by Santana {\sl et al.} \cite{SantanaEtal2021}.
This type of coupling, but with a delay was studied numerically in Saha and Freudel \cite{Saha2017} and both numerically and experimentally in Ponomarenko {\sl et al.} \cite{Ponomarenko} and in Kulminskiy {\sl et al.} \cite{Kulminskiy} for synchronisation.
Experimental and numerical results on both symmetric and asymmetric sigmoid coupling were described in Egorov {\sl et al.} \cite{Egorov}. 
Two different coupling constants, one for the fast variables the other for the slow ones are explored by Kawato {\sl et al.} \cite{KawatoEtAl1979} and by Krupa {\sl et al.} in 2014 \cite{Krupa2014}.
Synchronisation of two coupled equations of Hodgkin-Huxley type (models for nerve impulse) has been studied by Labouriau and Rodrigues \cite{LabouriauR2003}, but it does not cover FHN.
  
Many of these authors explored the fast-slow structure of the equations focusing on several different aspects of the dynamics: synchrony, periodic and chaotic mixed-mode oscillations, canards, and bursts.
We will discuss their findings in more detail in the final section of this article.
\bigbreak

Here we couple two identical FHN symmetrically and linearly through the slow equations.
To the best of our knowledge this has not been done before, probably because this type of coupling does not have a natural interpretation for nerve cells.
However, this coupling could be realised, for instance, between two Nagumo's circuits, as is suggested in Nagumo {\sl et al.} \cite[Figures 5 and 6]{Nagumo}, though the authors explore the connection in a different way.
It allows us to compare the different types of coupling, as discussed at the end of this article.

\subsection*{Structure of the article}
We explore analytically the two different time scales that arise in \eqref{eq:FHN} for small values of $\varepsilon$ in order to obtain dynamical information on the coupled system.
The fast-slow structure allows us to locate a region in phase space where all the recurring dynamics takes place.
After describing the coupled equations in Section~\ref{sec:FHN} we introduce some of the techniques of fast-slow systems in Section~\ref{sec:critical}, in particular describing an important object in this type of study, the critical manifold.
Conditions under which the solutions of the two equations synchronise are obtained in Section~\ref{sec:synchro}.
A type of global dynamical behaviour called canards that gives rise to mixed-mode oscillations and small amplitude transients, is treated in Section~\ref{sec:MMOs}.
Finally our results are discussed in Section~\ref{sec:discussion} comparing them to the findings by the above mentioned authors, with pointers to future work, followed by a brief excursion into the possibly chaotic mixed-mode oscillations that arise in one-directional coupling where one FHN is used for periodically forcing the other.

We have endeavoured to make a self contained exposition bringing together all related topics. 
We have provided illustrative figures and intuitive descriptions of the mathematical concepts and techniques to make the paper easily readable.
For the reader not familiar with fast-slow systems we suggest reading our paper \cite{GLR2024} where the method is described with the equations \eqref{eq:FHN} as an example.
All figures in this article were created through numerical simulations conducted in \emph{Matlab}, using integration functions such as \emph{ode15s} or \emph{ode23s}, except for non-numerical figures drawn by the authors.

\section{The object of study}\label{sec:FHN}
In this section, we introduce the coupled system under analysis, as well as some terminology and notation that will be used throughout the article.
More precisely, we study the dynamics of two FitzHugh--Nagumo equations coupled linearly through the slow equations:
\begin{equation}\label{eq:2FHN}
   \left\{
   \begin{array}{rcl}
      \varepsilon\dot{x}_1 &=& -y_1+\va(x_1)\\
      \varepsilon\dot{x}_2 &=&-y_2+\va(x_2)\\
      \dot{y}_1 &=&x_1-by_1-c-k(y_1-y_2)\\
      \dot{y}_2 &=& x_2-by_2-c-k(y_2-y_1)	
   \end{array}
   \right.
   \qquad
   \va(x)=4x-x^3
   \qquad b,c,k\in \RR\quad k\ne 0 \quad 0<\varepsilon\mm 1 \ .
\end{equation}
For $i\in \{1,2\}$, $\dot{x_i}, \dot{y_i}$ represent the usual derivative with respect to $t$.
We think of \eqref{eq:2FHN} as two coupled FHN systems, that we will abuse language calling cells, each one with dynamics represented by $(x_1,y_1)$ and $(x_2,y_2)$.
The parameters $b$ and $c$ modulate the dynamics of each cell as discussed in \cite{GLR2024} and in \cite{CPRG2024}.
The parameter $k\ne 0$ is the coupling strength.

When $k=0$, the two cells are independent and the dynamics of \eqref{eq:2FHN} is characterised by the cartesian product of two FitzHugh--Nagumo equations.
As described in \cite{GLR2024}, the dynamics of a single FitzHugh--Nagumo model is characterised by the existence of at least one and at most three equilibrium states, that may be either stable, unstable or saddles.
For some parameter values there is also a type of periodic solution called \emph{relaxation oscillation} whose speed has two times scales.
This implies that for $k=0$ (no coupling) the dynamics of \eqref{eq:2FHN} is characterised by the existence of periodic orbits (cartesian product with one of the equilibria) and by a resonant normally hyperbolic\footnote{See discussion of this concept in Section~\ref{sec:critical} below.} torus foliated by periodic solutions of rotation number 1. 
Hopf bifurcations and \emph{canard explosions} have been found in the particular cases $b=0$ (cf. \cite[Example 6.1]{GLR2024}) and $c=0$ (cf. \cite[Example 6.2]{GLR2024}). 
For $k\neq 0$ the dynamics of each oscillator interferes on the other. 
The normally hyperbolic torus (when it exists) persists for $k\ne 0$ small, as will be discussed in Section~\ref{sec:critical} below, where the concept of {\em normal hyperbolicity} is addressed. 

System \eqref{eq:2FHN} is formulated as a fast-slow system with two fast and two slow equations that are, respectively, the equations for $\dot x_i$ and for $\dot y_i$, $i=1,2$. 
There is a huge literature on this type of system, we refer the reader to the book \cite{Kuehn}, whose treatment we follow. 

A fast-slow system of differential equations is one in which some variables have their derivatives with larger magnitude than others. 
This leads to a system with two time scales. 
The general approach to this type of systems starts by grouping the variables in two disjoint sets: fast variables and slow variables. 
This separation is introduced in system \eqref{eq:2FHN} by the parameter $\varepsilon$. 
Accordingly, we call $X=(x_1,x_2)$ the {\em fast variables} whose dynamics is governed by the {\em fast equations} $\varepsilon\dot X=F(X,Y)$ with  $Y=(y_1,y_2)$ and
$$
F(x_1,x_2,y_1,y_2)=\left(-y_1+\va(x_1),-y_2+\va(x_2)\right)\ .
$$ 
Similarly, the {\em slow variables} $Y=(y_1,y_2)$ obey the {\em slow equations} $\dot Y=G(X,Y)$ with 
$$
G(x_1,x_2,y_1,y_2)=\left(x_1-by_1-c-k(y_1-y_2),x_2-by_2-c-k(y_2-y_1)\right) .
$$

\section{The critical manifold}\label{sec:critical}
An important concept in the dynamics of fast-slow systems is the {\em critical manifold} $C_0$, defined as the set of equilibria of the fast equations (that in general are {\bf not} equilibria of the full system).  
In the present case it is the surface 
$$
C_0=\left\{(x_1,x_2,y_1,y_2)\in \RR^4 :\ y_i=\va(x_i),\ i=1,2\right\}.
$$
In the singular case $\varepsilon=0$ the equations \eqref{eq:2FHN} become a system of two differential equations in the variables $y_i$ subject to the constraint $(x_1,x_2,y_1,y_2)\in C_0$.
The role of $C_0$ in the non singular case $\varepsilon>0$ will be discussed below, but before this we need to introduce another concept.

The critical manifold $C_0$ contains the {\em fold lines} (see Figure~\ref{fig:synchro1}) where its projection into the slow variables $y_1$, $y_2$ is singular.
In the present case this happens when either $\va'(x_1)=0$ or $\va'(x_2)=0$, hence the set of fold lines is given by
\begin{equation}
   \label{def:Sigma}
   \Sigma=\left\{(x_1,x_2,y_1,y_2)\in C_0:\ x_1=\pm 2/\sqrt{3}\quad\mbox{or}\quad x_2=\pm 2/\sqrt{3}
   \right\}.
\end{equation}

The degenerate points $(x_1,x_2,\va(x_1),\va(x_2))\in\Sigma$ with both $x_1=\pm x_2$ and $x_1=\pm 2/\sqrt{3}$ are \emph{double folds}, that correspond to the transverse crossing of two fold lines in $C_0$. 
They are illustrated as red points in Figures~\ref{fig:VarCrit} and \ref{fig:synchro1}.
The sets $C_0$ and $\Sigma$ have the structure of smooth manifolds.

\begin{figure}
   \includegraphics[width=0.7\linewidth]{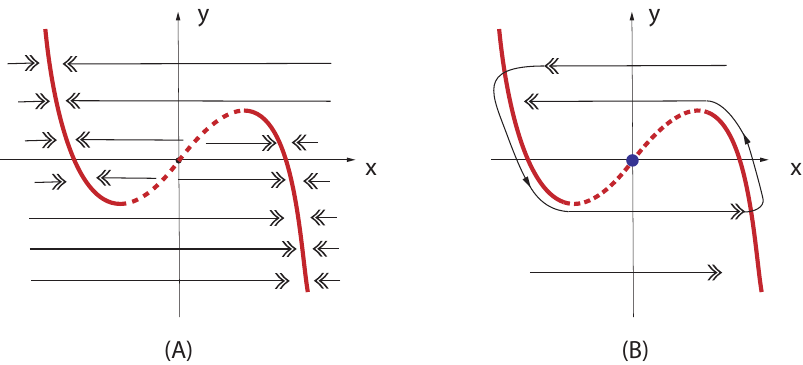}
   \caption{\small Qualitative behaviour of a fast-slow system illustrated for the FitzHugh--Nagumo equations \eqref{eq:FHN} with $b=c=0$ (the van der Pol equations).
            (A) - Solutions move rapidly to the critical manifold $C_0$ (thick red curve),
            then -- as shown in (B) -- they move slowly along the attracting part of  $C_0$ (the solid thick red curve) under the slow equation until they reach a fold point, where they jump out of $C_0$ and move quickly to another attracting part of the critical manifold.
            The dashed curve in (B) is the repelling part of $C_0$ and in a two-dimensional system the folds are isolated points, in this case they are the local extremes of $y=\va(x)$.
         }
   \label{fig:SlowFast}
\end{figure}

An intuitive description of the dynamics in the non singular case $0<\varepsilon\mm 1$ is as follows.
The coordinates $x_1(t)$ and $x_2(t)$ of a solution move a lot faster than the slow coordinates $y_1(t)$ and $y_2(t)$, so we may imagine that the $y_i(t)$ remain constant, while $(x_1(t),x_2(t))$ move fast to an attracting equilibrium of the fast equation, i.e., a point in $C_0$ where $\va'(x_i)<0$, as shown in Figure~\ref{fig:SlowFast} (A) for a single FHN \eqref{eq:FHN}.
Once in the critical manifold, the $x_i$ remain constant and the solutions evolve slowly in $C_0$, governed by the slow equations, and remain in $C_0$ until they reach a point in the fold lines.
At these points one of the $\va'(x_i)$ is zero, so the equilibrium of the fast equation stops being attracting and the solution jumps out of $C_0$ as in Figure~\ref{fig:SlowFast} (B).
Away from $C_0$ the fast equation takes over and the solution moves quickly to another attracting equilibrium of the fast equation.

A rigorous version of the description above was initially obtained in 1952 by Tikhonov \cite{Tikhonov} (quoted, for instance, in \cite{Wasow}) and later extended by Fenichel \cite{Fenichel}. 
Further extensions are described in \cite{Kuehn}.
In order to apply these results we need to define some concepts, valid for any vector field.

\begin{figure}
   \includegraphics[width=0.5\linewidth]{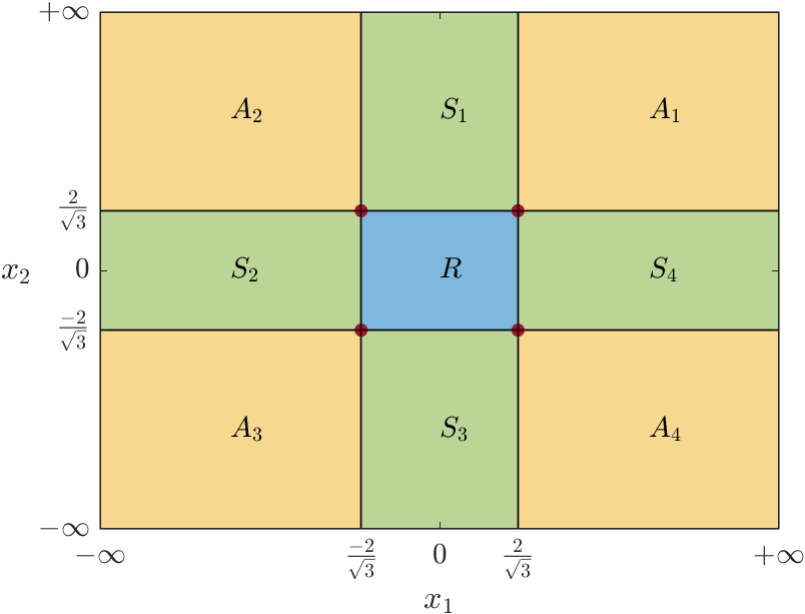}
   \caption{\small The critical manifold $C_0$ projected into the $(x_1,x_2)$ plane as in Lemma~\ref{lema:critManif} with four attracting regions $A_1\cup A_2\cup A_3\cup A_4=A$ (yellow), four saddle regions $S_1\cup S_2\cup S_3\cup S_4=S$ (green) one repelling region $R$ (blue), four fold lines (black) and four double fold points (red dots).
         }
   \label{fig:VarCrit}
\end{figure}

\begin{definition}\label{def:invariant}
   Let $v(\xi)$ be a smooth vector field in $\RR^n$, $S\subset \Rn$ and $M$ a smooth submanifold of $\Rn$. 
   \begin{enumerate}
      \item The set $S$ is {\em locally flow-invariant} under $v$ if for every point $x_0\in S$ there is a $t_0>0$ such that for every $t$ with $|t|<t_0$ the solution $x(t)$ of $\dot x=v(x)$ with initial condition $x(0)=x_0$ lies in $S$.
      \item The set $S$ is {\em flow-invariant} under $v$ if for every point $x_0\in S$ and every $t\in \RR$, the solution $x(t)$ of $\dot x=v(x)$ with initial condition $x(0)=x_0$ lies in $S$.
      \item The manifold $M\subset\RR^n$ is {\em normally hyperbolic} if the component of the vector field $v$ transverse to $M$ is hyperbolic. 
      More precisely, if there is a smooth splitting $\left.T\RR^n\right|_M=TM\oplus N$ such that for all $\xi\in M$ the eigenvalues of $\left.Dv(\xi)\right|_{N}$ have non-zero real part where $T$ represents the tangent space.
   \end{enumerate}
\end{definition}

Note that with these definitions a locally flow-invariant set may not be flow-invariant and a normally hyperbolic manifold is not hyperbolic.
This may be confusing, but these are standard mathematical concepts, in line with the mathematicians' practice of using adverbs.
A result by Fenichel \cite{Fenichel} shows that a manifold that is normally hyperbolic persists under small perturbations of the differential equations.
We will be using this result throughout the article.
In order to explore this feature we establish in the next result that the critical manifold has this property and classify the Lyapunov stability of the equilibria of the fast equations.
Normal hyperbolicity will also be used in the study of synchrony in Section~\ref{sec:synchro} below. 

\begin{lemma}\label{lema:critManif}
   The critical manifold $C_0$ for \eqref{eq:2FHN} is normally hyperbolic with respect to the fast equations everywhere except at the fold points in $\Sigma$.
   The complement $C_0\backslash \Sigma$ has 9 connected components (see Figure~\ref{fig:VarCrit}) corresponding to the different types of stability of equilibria of the fast equations:
   \begin{itemize}
      \item four regions with attracting equilibria, their union is\\
      $A=\left\{(x_1,x_2,y_1,y_2):\  |x_i|>2/\sqrt{3}\quad y_i=\va(x_i)\quad i=1,2\right\}$\\
      where $\partial F/\partial X$ has two  eigenvalues with negative real part;
      \item four regions with saddle equilibria, their union is\\
      $S=\left\{(x_1,x_2,y_1,y_2):\  |x_i|-2/\sqrt{3}\ \mbox{have opposite signs and}\quad y_i=\va(x_i)\quad i=1,2\ \right\}$\\
      where $\partial F/\partial X$ has two  eigenvalues of opposite signs;
      \item one region with  repelling equilibria\\
      $R=\left\{(x_1,x_2,y_1,y_2):\  |x_i|<2/\sqrt{3}\quad y_i=\va(x_i)\quad i=1,2\right\}$\\ where $\partial F/\partial X$ has two eigenvalues with positive real part.
   \end{itemize}
\end{lemma}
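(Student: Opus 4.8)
The plan is to exploit the fact that the coupling in \eqref{eq:2FHN} enters only through the slow equations, so that the fast (layer) subsystem decouples into two independent scalar equations. Passing to the fast time $\tau=t/\varepsilon$ and setting $\varepsilon=0$ freezes the slow variables, and the layer equations $x_1'=-y_1+\va(x_1)$, $x_2'=-y_2+\va(x_2)$ have equilibrium set exactly $C_0$. Along $C_0$ the Jacobian of the fast field with respect to the fast variables $X=(x_1,x_2)$ is
\[
\frac{\partial F}{\partial X}=\begin{pmatrix}\va'(x_1)&0\\0&\va'(x_2)\end{pmatrix},
\]
diagonal precisely because there is no fast coupling, with eigenvalues $\va'(x_1),\va'(x_2)$ where $\va'(x)=4-3x^2$.

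Next I would verify normal hyperbolicity in the sense of Definition~\ref{def:invariant}. I take the normal bundle $N$ to be spanned by the two fast coordinate directions; a direct check shows $N$ is invariant under the full layer Jacobian, that $\left.Dv\right|_N$ has eigenvalues $\va'(x_1),\va'(x_2)$, and that $\left.T\RR^4\right|_{C_0}=TC_0\oplus N$ holds exactly when both derivatives are non-zero. Since $\va'(x)=4-3x^2$ vanishes iff $x=\pm 2/\sqrt3$, on the complement of $\Sigma$ both eigenvalues have non-zero real part and the splitting is genuine, yielding normal hyperbolicity. On $\Sigma$ at least one $\va'(x_i)$ vanishes; the corresponding fast direction then lies in $TC_0$, the layer Jacobian acquires an additional zero eigenvalue, and no two-dimensional invariant complement with non-zero-real-part eigenvalues can exist, so normal hyperbolicity fails precisely on $\Sigma$.

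Then I would classify and count the components of $C_0\setminus\Sigma$. Off $\Sigma$ both eigenvalues are real and non-zero, with $\va'(x_i)=4-3x_i^2<0$ for $|x_i|>2/\sqrt3$ and $>0$ for $|x_i|<2/\sqrt3$. Two negative eigenvalues give an attracting point, two positive a repelling point, and opposite signs a saddle, matching $A$, $R$ and $S$. For the count I would use the product structure of the projection to the $(x_1,x_2)$ plane: each coordinate lies in one of the three intervals $(-\infty,-2/\sqrt3)$, $(-2/\sqrt3,2/\sqrt3)$, $(2/\sqrt3,\infty)$ determined by the fold values, producing a $3\times3$ grid of open rectangles. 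Four of these have both coordinates in an outer interval (region $A$), one has both in the inner interval (region $R$), and the four mixed rectangles form $S$, giving the stated nine connected components.

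The computations are elementary; the only points requiring attention are confirming that normal hyperbolicity degenerates exactly on $\Sigma$ (and not merely that the eigenvalue condition fails for the naive splitting) and keeping the component bookkeeping straight. I anticipate no serious obstacle, the essential simplification being that slow-only coupling leaves the fast Jacobian diagonal, reducing the analysis to the single-equation computation of \cite{GLR2024}.
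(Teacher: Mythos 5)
Your proof is correct and follows essentially the same route as the paper's: both take the normal complement $N$ to be the span of the fast coordinate directions, use the fact that slow-only coupling makes $\partial F/\partial X$ diagonal with entries $\va'(x_1),\va'(x_2)$, and read off normal hyperbolicity and the stability classification from the signs of these eigenvalues. Your explicit check that the splitting degenerates exactly on $\Sigma$ (so that normal hyperbolicity genuinely fails there, not merely for the naive splitting) and the $3\times 3$ grid count of the nine components are details the paper leaves implicit, but the argument is the same.
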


The sets $A$ and $S$ of the previous result may be written as the disjoint union described in the caption of Figure \ref{fig:VarCrit}. 
The letters $A$, $S$ and $R$ are suggestive of the stability character of the regions, $A$ for attracting, $S$ for saddle and $R$ for repelling.

\begin{proof}
   At a point $\xi=\left(x_1,x_2,\va(x_1),\va(x_2)\right)\in C_0$ the tangent space $T_\xi C_0$ is generated by $\left(1,0,\va'(x_1),0\right)$ and $\left(0,1,0,\va'(x_2)\right)$.
   If $\xi\notin\Sigma$ then $\va'(x_1)\ne 0$ and $\va'(x_2)\ne 0$, so we may take $N$ to be the plane generated by $\left(1,0,0,0\right)$ and $\left(0,1,0,0\right)$.
   For the vector field $v$ associated to the equations \eqref{eq:2FHN} then $\left.Dv(\xi)\right|_{N}$ is the derivative of the fast equations $F(X,Y)$ with respect to the fast variables $X$.
   Then normal hyperbolicity is determined by the eigenvalues of the matrix 
   $$
   \left.Dv(\xi)\right|_{N}=\dfrac{\partial F}{\partial X}{(x_1, x_2,y_1,y_2)}=
   \begin{pmatrix}\va'(x_1)&0\\0&\va'(x_2)\end{pmatrix}
   \qquad\mbox{where}\qquad \va'(x)=4-3x^2\ .
   $$
   The result follows by inspection of the sign of $\va'$.
\end{proof}

The independence of the time scales when $\varepsilon = 0$ makes the dynamics of \eqref{eq:2FHN} simpler to understand. 
The next result follows directly from Lemma~\ref{lema:critManif} and says that the dynamics of the perturbed system \eqref{eq:2FHN} is similar to that of the singular system, with a deviation of $\mathcal{O}(\varepsilon)$, where $\mathcal{O}$ stands for the usual Landau notation. 
In that result the expression ``close'' refers to the Haussdorf topology.

\begin{corollary}\label{cor:centreManif}
   With respect to \eqref{eq:2FHN}, for every $r\geq 2$ and for sufficiently small $\varepsilon>0$, there is a locally flow-invariant manifold $C_\varepsilon$ of class $C^r$ which is $\mathcal{O}(\varepsilon)$ close to the normally hyperbolic part of $C_0$. 
   Moreover, close to the set $A$ where $C_0$ attracts the fast flow, the manifold $C_\varepsilon$ is also locally attracting for the fast equations in \eqref{eq:2FHN} and there is a locally attracting and locally flow-invariant manifold $A_\varepsilon$.
\end{corollary}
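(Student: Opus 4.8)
The plan is to obtain this as a direct consequence of Fenichel's geometric singular perturbation theory, for which the single hypothesis --- normal hyperbolicity of the unperturbed critical manifold --- has already been verified in Lemma~\ref{lema:critManif}. First I would keep \eqref{eq:2FHN} in the standard slow-fast form $\varepsilon\dot X=F(X,Y)$, $\dot Y=G(X,Y)$, so that $C_0=\{F(X,Y)=0\}$ is exactly the set described in Section~\ref{sec:critical}. Lemma~\ref{lema:critManif} supplies the smooth splitting $\left.T\RR^4\right|_{C_0}=TC_0\oplus N$ away from $\Sigma$, together with the explicit normal linearisation $\partial F/\partial X=\mathrm{diag}(\va'(x_1),\va'(x_2))$, whose eigenvalues have non-zero real part on $C_0\backslash\Sigma$. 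This is precisely the normal hyperbolicity required to invoke Fenichel's invariant-manifold theorem (see \cite{Fenichel,Kuehn}).

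Fenichel's theorem then yields, for every compact normally hyperbolic subset of $C_0\backslash\Sigma$ and every $r\geq 2$, a $C^r$ locally flow-invariant manifold $C_\varepsilon$ that is $\mathcal{O}(\varepsilon)$-close to $C_0$ in the Hausdorff topology, for all sufficiently small $\varepsilon>0$ (with the admissible range of $\varepsilon$ allowed to shrink as $r$ grows, which is consistent with the ``for every $r$ and for sufficiently small $\varepsilon$'' phrasing of the statement). The invariance is only local because $C_\varepsilon$ inherits a boundary from the fold lines $\Sigma$, near which the slow flow may carry trajectories across the edge; this is exactly the distinction recorded in Definition~\ref{def:invariant}.

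For the second assertion I would restrict attention to the attracting region $A$. There $\partial F/\partial X$ has two eigenvalues with negative real part, so the normal bundle $N$ is entirely contracting. Since the persistence of normally hyperbolic manifolds respects the stable/unstable splitting of the normal bundle, the piece of $C_\varepsilon$ lying $\mathcal{O}(\varepsilon)$-close to $A$ is itself locally attracting for the fast flow; calling this piece $A_\varepsilon$ gives the claimed locally attracting, locally flow-invariant manifold.

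The one genuine obstacle is that $C_0$, $A$ and $S$ are non-compact, whereas Fenichel's theorem is classically stated for compact manifolds, possibly with boundary. I would resolve this by exhausting the normally hyperbolic part of $C_0$ by compact pieces and using the fact --- already exploited elsewhere in this article --- that the recurrent dynamics of \eqref{eq:2FHN} is confined to a bounded region of phase space. On such a region the eigenvalues $\va'(x_i)=4-3x_i^2$ are uniformly bounded away from zero within each hyperbolicity type, so the hyperbolicity estimates are uniform and the theorem applies on the relevant compact pieces, which is all that the statement requires.
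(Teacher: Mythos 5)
Your proposal is correct and takes essentially the same route as the paper: both deduce the result by applying Fenichel's theorem \cite{Fenichel} to the normal hyperbolicity of $C_0\backslash\Sigma$ established in Lemma~\ref{lema:critManif}. The paper's proof is a one-line appeal to Fenichel, whereas you additionally spell out the technical points (the compact-exhaustion needed for non-compact $C_0$, and why the contracting normal splitting over $A$ persists to give an attracting $A_\varepsilon$) that the paper leaves implicit.
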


\begin{proof}
   Since the critical manifold $C_0$ is normally hyperbolic everywhere except at the fold points in $\Sigma$, the result follows by Fenichel's Theorem \cite{Fenichel} that guarantees that for $0 < \varepsilon \ll1$, there exists a set $C_\varepsilon$ very close to $C_0$ exhibiting the same dynamical behaviour as $C_0$.
\end{proof}

The manifold $C_\varepsilon$ is called the \emph{slow manifold} associated to \eqref{eq:2FHN}.

For small $\varepsilon$ a typical solution of \eqref{eq:2FHN} behaves as follows: a trajectory starting away from $C_\varepsilon$ moves fast to the attracting part, $A_\varepsilon$, of $C_\varepsilon$ and remains near this sheet, with the dynamics close to that of the slow equations in $C_0$, until it runs into the fold  lines $\Sigma$. 
At $\Sigma$ it will typically jump out of $C_\varepsilon$ and move fast to another attracting component of $C_\varepsilon$ close to $C_0\backslash \Sigma$.
Therefore, for small $\varepsilon>0$, solutions of \eqref{eq:2FHN} will be close to {\em singular solutions}, defined as trajectories that move with the fast equation into the attracting part of $C_0$ and move on $C_0$ following the slow equations $\dot Y=G(X,Y)$.

In special situations a trajectory may cross $\Sigma$ and remain in the unstable region of $C_\varepsilon$ for some time. 
Such a trajectory is called a {\em canard} and will be discussed in Definition \ref{def:canard} below.

\section{Synchrony and bistability} \label{sec:synchro}
In this section we use symmetries to obtain conditions on the parameters of \eqref{eq:2FHN} under which its solutions are in {\em synchrony}: they behave like a single FitzHugh--Nagumo system.
We show that the set of such synchronous solutions contains a subset that attracts all solutions that start near it.
We also obtain conditions for {\em antisynchrony} where one of the ``cells'' in  \eqref{eq:2FHN} is exactly minus the other, that also contains an attracting subset.
We show that  the two situations are compatible, giving rise to bistability in the equations, and we obtain conditions where these properties hold in an approximate sense when the two coupled equations are not identical.
The proofs will involve normal hyperbolicity and rely on Fenichel's Theorem \cite{Fenichel}.
We start by a rigorous description of the concepts. 

\begin{definition}\label{def:synchro}
   Let $\xi(t)=\left(x_1(t),x_2(t),y_1(t),y_2(t)\right)$, $t\in \RR$, be a solution of \eqref{eq:2FHN}. 
   Then we define:
   \begin{itemize}
      \item $\xi(t)$ is in {\em synchrony} if we have $x_1(t)=x_2(t)$ and $y_1(t)=y_2(t)$ for all $t\in\RR$;
      accordingly the plane ${\mathcal S}=\left\{(x,x,y,y):\quad x,y\in\RR\right\}\subset\RR^4$ is called the {\em synchrony plane};
      \item $\xi(t)$ is in {\em antisynchrony} if we have $x_1(t)=-x_2(t)$ and $y_1(t)=-y_2(t)$ for all $t\in\RR$; accordingly the plane ${\mathcal AS}=\left\{(x,-x,y,-y):\quad x,y\in\RR\right\}\subset\RR^4$ is called the {\em antisynchrony plane}.
   \end{itemize}
\end{definition}

We start by addressing synchrony.
\begin{theorem}\label{teo:synchro}
   The synchrony plane ${\mathcal S}$ is flow-invariant under \eqref{eq:2FHN}.

   For small $\varepsilon> 0$ the intersection ${\mathcal S}\cap A_\varepsilon$ of the synchrony plane with the attracting part, $A_\varepsilon$, of the slow manifold $C_\varepsilon$ is locally flow-invariant under \eqref{eq:2FHN}. 
   It is normally hyperbolic and locally attracting if $k> -b/2$.
\end{theorem}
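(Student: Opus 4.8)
The theorem has three assertions: flow-invariance of $\mathcal S$, local flow-invariance of $\mathcal S\cap A_\varepsilon$, and normal hyperbolicity/local attractivity of this intersection under the condition $k>-b/2$. I would organize the proof around the observation that $\mathcal S$ is a fixed-point subspace of a symmetry, which forces invariance for free.

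The plan is to first exhibit the symmetry. I would introduce the involution $\gamma(x_1,x_2,y_1,y_2)=(x_2,x_1,y_2,y_1)$ that swaps the two cells, and verify directly that the vector field of \eqref{eq:2FHN} is $\gamma$-equivariant; this is immediate from the symmetric way the coupling term $k(y_i-y_j)$ and the identical functions $\va$ enter. The synchrony plane is exactly the fixed-point set $\Fix(\gamma)=\mathcal S$. A standard fact in equivariant dynamics is that the fixed-point subspace of a symmetry is flow-invariant: if $\xi(0)\in\mathcal S$ then $\gamma\xi(t)$ and $\xi(t)$ solve the same equation with the same initial condition, so by uniqueness $\gamma\xi(t)=\xi(t)$ for all $t$, i.e. $\xi(t)\in\mathcal S$. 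This gives the first sentence of the theorem.

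For the second assertion, I would note that $A_\varepsilon$ is itself flow-invariant under the fast equations only locally (Corollary~\ref{cor:centreManif}), but more to the point $\gamma$ maps $C_0$ to itself and preserves the attracting region $A$, so by uniqueness in Fenichel's construction $\gamma$ also maps the slow manifold $C_\varepsilon$ (and $A_\varepsilon$) to itself. Hence $\mathcal S\cap A_\varepsilon=\Fix(\gamma)\cap A_\varepsilon$ is the intersection of two $\gamma$-invariant locally flow-invariant sets, and is itself locally flow-invariant. For the normal hyperbolicity claim I would work in coordinates adapted to the splitting $\RR^4=\mathcal S\oplus\mathcal S^{\perp}$, writing $u=(x_1+x_2, y_1+y_2)/2$ for the symmetric (tangential) part and $w=(x_1-x_2, y_1-y_2)/2$ for the antisymmetric (transverse) part. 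On $\mathcal S$ one has $w=0$, and I would compute the linearization of the $w$-equation restricted to the reduced slow flow on $\mathcal S\cap A_\varepsilon$. The coupling term contributes $-2k$ to the transverse $y$-dynamics while the self-decay contributes $-b$, so the transverse slow eigenvalue involves the combination $b+2k$; together with the fast transverse direction (which is strongly attracting on $A_\varepsilon$, where $\va'(x_i)<0$), the condition for the transverse eigenvalues to have negative real part reduces to $k>-b/2$, giving both normal hyperbolicity and local attractivity.

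The main obstacle will be the normal-hyperbolicity computation on $A_\varepsilon$ rather than on $C_0$: one must argue that the transverse eigenvalues of the \emph{reduced} (slow) flow, computed in the singular limit $\varepsilon=0$, persist with the same sign pattern for small $\varepsilon>0$. I would handle this by computing the transverse linearization of the slow equations within $C_0\cap\mathcal S$, showing the relevant eigenvalue crosses zero exactly at $k=-b/2$, and then invoking Fenichel's Theorem (which the paper allows) to transfer the $\mathcal O(\varepsilon)$-close sign condition to $A_\varepsilon$. Care is needed to keep the fast transverse contraction (automatic on $A_\varepsilon$) separate from the slow transverse direction that produces the threshold $k>-b/2$.
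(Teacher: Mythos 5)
Your proposal is correct in outline and follows essentially the same route as the paper: the paper's proof also identifies $\mathcal S=\Fix(\gamma)$ for the swap $\gamma(x_1,x_2,y_1,y_2)=(x_2,x_1,y_2,y_1)$, passes to the sum/difference coordinates $z_1=x_1+x_2$, $z_2=y_1+y_2$, $z_3=x_1-x_2$, $z_4=y_1-y_2$ (Lemma~\ref{prop:synchro}), and then transfers the conclusion from $A$ to $A_\varepsilon$ by $\mathcal O(\varepsilon)$-closeness (Lemma~\ref{cor:synchro}). The only organizational difference is that the paper does not split the transverse directions into fast and slow eigenvalues in the singular limit: it computes the full $\varepsilon$-dependent transverse block $N_\gamma(x)=\left(\begin{smallmatrix}\va'(x)/\varepsilon&-1/\varepsilon\\ 1&-(b+2k)\end{smallmatrix}\right)$ and reads off normal hyperbolicity and attraction directly from $\det N_\gamma(x)=-\frac{1}{\varepsilon}\left((b+2k)\va'(x)-1\right)>0$ and $\tr N_\gamma(x)=\va'(x)/\varepsilon-(b+2k)<0$, which holds for every small $\varepsilon>0$ at once, with no separate perturbation step for the eigenvalues.

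Two details of your plan need repair when written out. First, Fenichel slow manifolds are not unique (they are determined only up to errors exponentially small in $\varepsilon$), so ``$\gamma(A_\varepsilon)=A_\varepsilon$ by uniqueness of Fenichel's construction'' is not a valid step; either argue that a $\gamma$-invariant slow manifold can be chosen (since $\gamma(C_\varepsilon)$ is again a slow manifold), or --- as the paper does --- avoid the issue entirely: $\Fix(\gamma)$ is globally flow-invariant and $A_\varepsilon$ is locally flow-invariant, so their intersection is automatically locally flow-invariant. Second, the transverse slow eigenvalue is not $-(b+2k)$ alone: differentiating the constraint $y_i=\va(x_i)$ shows that in the reduced flow it equals $1/\va'(x)-(b+2k)$, which at a fixed point of the attracting branch crosses zero at $b+2k=1/\va'(x)$, not at $k=-b/2$. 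The threshold $k=-b/2$ emerges only because $\va'$ takes every value in $(-\infty,0)$ on the unbounded attracting branch --- this is exactly how the paper proves that $k\ge -b/2$ is necessary for normal hyperbolicity --- so your phrase ``the relevant eigenvalue crosses zero exactly at $k=-b/2$'' must be replaced by a uniformity-over-the-branch argument.
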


\begin{proof}
   Equations \eqref{eq:2FHN} have the symmetry $\gamma(x_1,x_2,y_1,y_2)=(x_2,x_1,y_2,y_1)$, therefore the synchrony plane ${\mathcal S}$ is the set of fixed points of this symmetry ${\mathcal S}=\Fix(\gamma)$ and hence it is flow-invariant.  
   The rest of the proof is divided in two lemmas stated and proved below.

   In Lemma~\ref{prop:synchro} we obtain conditions for the intersection of ${\mathcal S}=\Fix(\gamma)$ with the attracting part, $A$, of $C_0$ to be attracting. 
   Then, in Lemma~\ref{cor:synchro} we deal with the intersection with $A_\varepsilon$.
\end{proof}

\begin{figure}
  \parbox{0.35\linewidth}{
  \begin{center}
      \includegraphics[width=\linewidth]{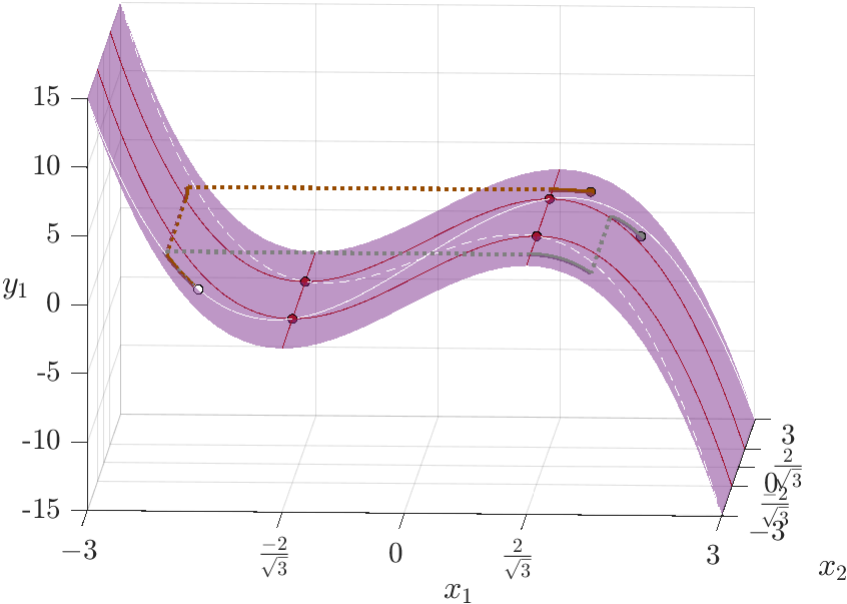}\\
      (A)            
   \end{center}
   }
   \quad
   \parbox{0.35\linewidth}{
   \begin{center}
      \includegraphics[width=\linewidth]{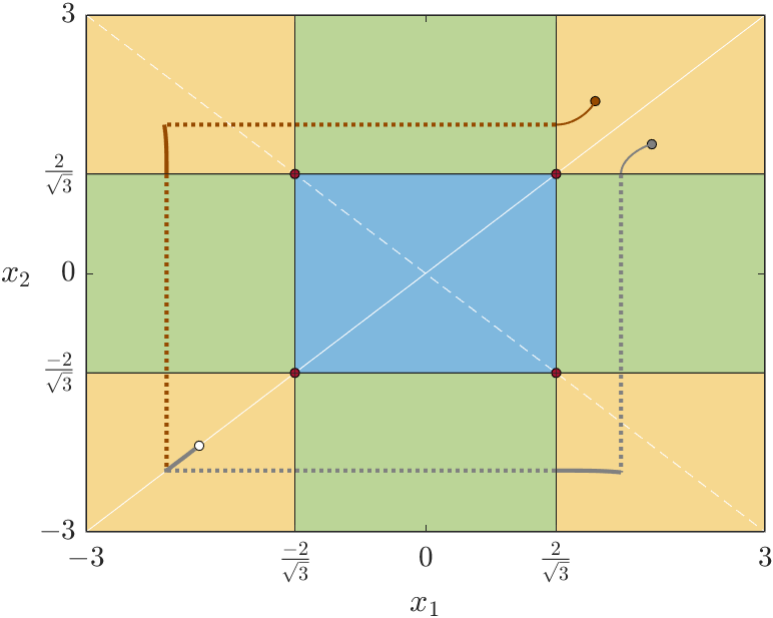}\\
      (B)            
   \end{center}
   } 
   \caption{\small Illustration of Lemma~\ref{prop:synchro}. 
            Two singular solutions of \eqref{eq:2FHN} attracted to the synchrony plane shown in two different projections, parameters $b=0$, $c=-2$ and $k=1$.
            Conventions: trajectories with initial conditions at the brown and gray dots, slow part in brown/gray solid lines, fast part dotted, stable equilibrium (white dot), fold lines (red/black), double fold points (red dots), intersection with synchrony plane white line. 
            (A) - Projection of $C_0$ into the subspace $\left(x_1,x_2,y_1\right)$ (purple) and trajectories. 
            (B) - Same trajectories as in (A) projected into the plane $\left(x_1,x_2\right)$. 
         }
   \label{fig:synchro1}
\end{figure}
        
\begin{figure}
  \parbox{0.35\linewidth}{
  \begin{center}
      \includegraphics[width=\linewidth]{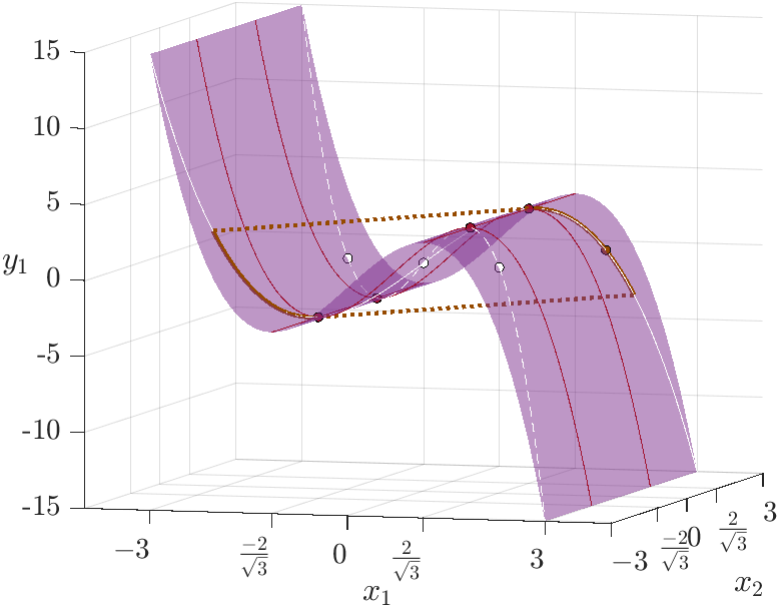}\\
      (A)            
   \end{center}
   }
   \quad
   \parbox{0.35\linewidth}{
   \begin{center}
      \includegraphics[width=\linewidth]{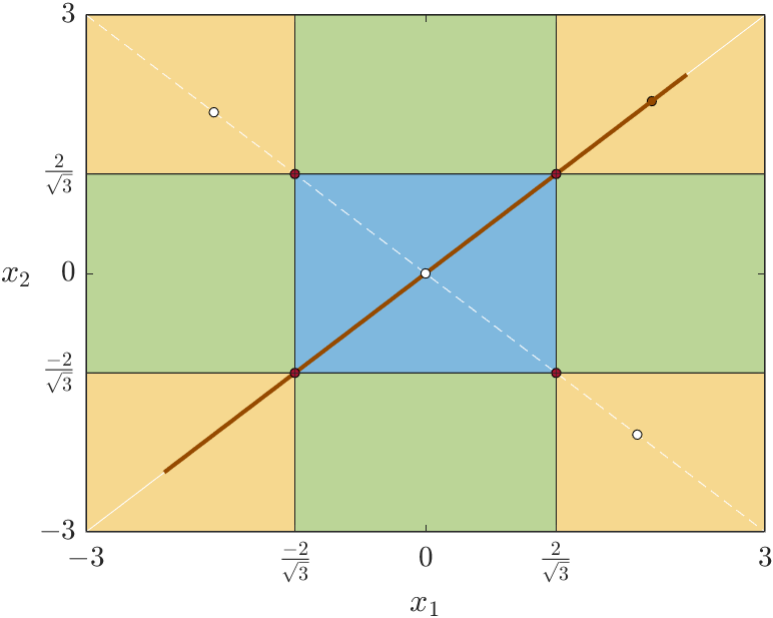}\\
      (B)            
   \end{center}
   }
   \caption{\small Illustration of Theorem~\ref{teo:synchro}. 
            Singular solution of \eqref{eq:2FHN} on the synchrony plane shown in two different projections, parameters  $b=0=c$ and $k=1$.
            Conventions: trajectories with initial conditions at the brown dot, slow part in brown solid lines, fast part dotted, equilibria (white dots), fold lines (red/black), degenerate fold points (red dots), intersection with synchrony plane (white line). 
            (A) - Projection $C_0$ into the subspace $\left(x_1,x_2,y_1\right)$ (purple). 
            (B) - Same trajectories as in (A) projected into the plane $\left(x_1,x_2\right)$.  
         }
   \label{fig:synchro3}
\end{figure}

In the next result we use properties of linear maps on $\RR^4$ to analyse the stability. 
The expressions $\tr$ and $\det$ denote the usual trace and determinant operators. 

\begin{lemma}\label{prop:synchro}
   The intersection $\Fix(\gamma)\cap A$ is
   normally hyperbolic if and only if $k\ge -b/2$.
   Moreover, in this case it is locally attracting.
\end{lemma}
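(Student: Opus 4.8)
The plan is to analyze the normal hyperbolicity of $\Fix(\gamma)\cap A$ by restricting the linearization of the vector field $v$ associated to \eqref{eq:2FHN} to the normal bundle of this two-dimensional invariant manifold inside $\RR^4$ and determining when the normal eigenvalues have non-zero real part. Since $\mathcal{S}=\Fix(\gamma)$ is flow-invariant and two-dimensional, and $A$ is the attracting part of the critical manifold, the set $\Fix(\gamma)\cap A$ is a curve sitting inside the synchrony plane. First I would parametrize this curve: on $\mathcal{S}$ we set $x_1=x_2=x$ and $y_1=y_2=y$ with $y=\va(x)$ and $|x|>2/\sqrt{3}$, so it is genuinely a one-dimensional object within the slow dynamics on the synchrony plane.

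The key step is to split the tangent space $T_\xi\RR^4$ appropriately. Because $\gamma$ is a linear involution, $\RR^4$ decomposes $\gamma$-equivariantly into the symmetric subspace $\mathcal{S}$ (the $+1$ eigenspace) and the antisymmetric subspace (the $-1$ eigenspace, spanned by $(1,-1,0,0)$ and $(0,0,1,-1)$). Since the equations are $\gamma$-equivariant, $Dv(\xi)$ at a point $\xi\in\mathcal{S}$ block-diagonalizes with respect to this splitting, and the antisymmetric block governs the dynamics transverse to $\mathcal{S}$. I would compute this $2\times 2$ antisymmetric block in the antisymmetric slow/fast coordinates. For the normal bundle of $\Fix(\gamma)\cap A$ one must combine the one-dimensional normal direction inside the critical manifold (already controlled by Lemma~\ref{lema:critManif} through $\va'(x)<0$ on $A$) with the two transverse-to-$\mathcal{S}$ directions. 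The trace and determinant of the relevant $2\times 2$ matrix then control the sign of the real parts of its eigenvalues: normal hyperbolicity fails exactly when an eigenvalue crosses the imaginary axis, which is detected by $\det=0$ or by the trace vanishing with negative determinant. I expect the condition $k\ge -b/2$ to emerge from requiring the determinant to stay away from zero (equivalently, the product of normal eigenvalues to keep a definite sign), with the threshold $k=-b/2$ being where the transverse eigenvalue in the slow direction hits zero.

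For the attracting claim I would then check the sign of the trace of the transverse block: on $A$ the fast eigenvalue $\va'(x)<0$ gives a strongly negative contribution of order $1/\varepsilon$ in the rescaled time, and the slow transverse eigenvalue, coming from the coupling term, contributes $-b-2k$; together with $k> -b/2$ these force both real parts negative, so the intersection is locally attracting. Here I would be careful to work at $\varepsilon=0$ first, as the statement concerns $A=C_0\cap\mathcal{S}$ rather than $A_\varepsilon$, so the fast transverse direction is governed directly by $\va'(x)$ and the slow transverse direction by the linearization of the antisymmetric slow equation, whose coefficient is $-(b+2k)$ once the coupling terms $-k(y_1-y_2)$ and $-k(y_2-y_1)$ are expressed in the antisymmetric variable $y_1-y_2$.

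The main obstacle I anticipate is handling the borderline case and the direction of the ``if and only if'' carefully. Showing that $k> -b/2$ gives normal hyperbolicity is the easy algebraic direction; the subtlety is the boundary $k=-b/2$, where the antisymmetric slow eigenvalue is exactly zero, yet the statement still asserts normal hyperbolicity holds (the condition is $k\ge -b/2$, not strict). I would need to argue that at $k=-b/2$ the zero eigenvalue is compensated or reinterpreted — most plausibly because normal hyperbolicity of the curve $\Fix(\gamma)\cap A$ only requires the genuinely normal directions to be hyperbolic, and one of the antisymmetric directions may lie tangent to a larger invariant manifold (the attracting slow manifold) rather than normal to the curve, so the relevant normal eigenvalue is the nonzero fast one. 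Disentangling which eigenvalues count as ``normal'' to the one-dimensional set versus ``tangent along the invariant slow flow'' is the delicate bookkeeping that determines whether the threshold is strict or not, and that is where I would focus the rigorous argument.
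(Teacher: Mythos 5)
Your overall strategy coincides with the paper's: use the $\gamma$-equivariant splitting of $\RR^4$ into symmetric and antisymmetric subspaces, block-diagonalize the linearization at points of $\Fix(\gamma)$, and read off normal hyperbolicity and attraction from the trace and determinant of the $2\times 2$ antisymmetric block, whose slow diagonal entry is indeed $-(b+2k)$. However, the place where you announce you would "focus the rigorous argument" rests on a genuine misconception. The antisymmetric block is not triangular, so its eigenvalues are \emph{not} $\va'(x)/\varepsilon$ and $-(b+2k)$ separately: in the coordinates $z_3=x_1-x_2$, $z_4=y_1-y_2$ one gets
\[
N_\gamma(x)=\begin{pmatrix}\va'(x)/\varepsilon&-1/\varepsilon\\ 1&-(b+2k)\end{pmatrix},
\qquad
\det N_\gamma(x)=\frac{1-(b+2k)\va'(x)}{\varepsilon},
\qquad
\tr N_\gamma(x)=\frac{\va'(x)}{\varepsilon}-(b+2k).
\]
At the boundary value $k=-b/2$ this gives $\det N_\gamma(x)=1/\varepsilon>0$ and $\tr N_\gamma(x)=\va'(x)/\varepsilon<0$ on $A$: no eigenvalue vanishes, both real parts are negative, and the non-strict inequality in the statement needs no special treatment. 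The "zero antisymmetric slow eigenvalue" you anticipate at $k=-b/2$ does not exist --- it is an artefact of ignoring the off-diagonal entries $-1/\varepsilon$ and $1$ --- and your proposed resolution (reclassifying which directions are normal, invoking tangency to the slow manifold) is incorrect and would derail the proof. A small related slip: pure imaginary eigenvalues of a $2\times2$ matrix occur when the trace vanishes with \emph{positive} determinant, not negative.

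The second gap is the necessity direction, which your sketch never establishes: you attribute the threshold to a transverse eigenvalue hitting zero \emph{at} $k=-b/2$, but that is not the mechanism. The paper's argument is pointwise in $x$: on $\Fix(\gamma)\cap A$ the derivative $\va'(x)=4-3x^2$ takes every value in $(-\infty,0)$, so for any $k$ with $b+2k<0$ there is a point of $\Fix(\gamma)\cap A$ where $\va'(x)=1/(b+2k)$, hence $\det N_\gamma(x)=0$ and the normal block has a zero eigenvalue there. Since normal hyperbolicity must hold at every point of the set, this single degeneracy destroys it for each fixed $k<-b/2$ (and at nearby points the determinant changes sign, so attraction also fails). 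With these two corrections --- drop the spurious boundary-case worry, and add the surjectivity argument for the "only if" --- your computation of the block together with the observation that $b+2k\ge 0$ and $\va'(x)<0$ force $\det N_\gamma>0$ and $\tr N_\gamma<0$ on all of $A$ completes the proof exactly as in the paper.
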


\begin{proof}
   The normal hyperbolicity of the invariant plane $\Fix(\gamma)$ is determined by the eigenvalues of the component $N$ of the derivative of the vector field $V(X,Y)=(F(X,Y),G(X,Y))$ transverse to $\Fix(\gamma)$. 
   We change coordinates to $z_1=x_1+x_2$, $z_2=y_1+y_2$ in $\Fix(\gamma)$ and $z_3=x_1-x_2$, $z_4=y_1-y_2$ in $\Fix(\gamma)^\perp$, the orthogonal complement of $\Fix(\gamma)$.
   This means that:
   $$
   x_1= \frac{z_1+z_3}{2}, \quad x_2= \frac{z_1-z_3}{2}, \quad y_1= \frac{z_2+z_4}{2}\quad \text{and}\quad y_1= \frac{z_2-z_4}{2}.
   $$
   Let $\widehat{V}(Z)$ be the expression of the vector field $V(X,Y)$ in the new coordinates $Z=(z_1,z_2,z_3,z_4)$, with associated equations given by
   \begin{equation}\label{eq:changed}
      \left\{\begin{array}{lcl}
         \varepsilon \dot{z}_1&=&-z_2 + \varphi\left(\dfrac{z_1+z_3}{2}\right)+\varphi\left(\dfrac{z_1-z_3}{2}\right)\\
         \dot{z}_2&=& z_1 -bz_2 -2c\\
         \varepsilon \dot{z}_3&=&  -z_4 + \varphi\left(\dfrac{z_1+z_3}{2}\right)-\varphi\left(\dfrac{z_1-z_3}{2}\right)\\
         \dot{z}_4&=& z_3 -bz_4 -2k z_4 .
      \end{array}\right.
   \end{equation}
   
   Computing $D\widehat{V}(Z)$ at the plane $\Fix(\gamma)$, given by $x_1=x_2=x$, $y_1=y_2=y$ where $z_1=2x$, $z_2=2y$, $z_3=z_4=0$ we obtain:
   \begin{equation}\label{eq:DVchanged}
      \left.D{ \widehat{V}}(z_1,z_2,z_3,z_4)\right|_{(2x,2y,0,0)}=\begin{pmatrix}
         \va'(x) /\varepsilon&-1/\varepsilon&0&0\\
         1&-b&0&0\\
         0&0&\va'(x)/\varepsilon&-1/\varepsilon\\
         0&0&1&-(b+2k)
         \end{pmatrix}
         \qquad
         N_\gamma(x)=\begin{pmatrix}\va'(x)/\varepsilon&-1 /\varepsilon\\1&-(b+2k)
      \end{pmatrix}
   \end{equation}
   where $N_\gamma(x)$ is the component of $D\widehat{V}(Z)$ in the directions perpendicular to $\Fix(\gamma)$, evaluated at $Z=(2x,2y,0,0)\in \Fix(\gamma)$, with
   $$
   \det(N_\gamma(x))=- \dfrac{1}{\varepsilon}\left( (b+2k)\va'(x)-1\right)
   \quad \text{and}\quad \tr(N_\gamma(x))=\dfrac{\va'(x)}{\varepsilon}-(b+2k).
   $$
   The synchrony plane is normally hyperbolic at $x\in\Fix(\gamma)$ if $\det(N_\gamma(x))\ne 0$ and $\tr(N_\gamma(x))\ne 0$.
   It is attracting at $x\in\Fix(\gamma)$ if $\det(N_\gamma(x))>0$ and $\tr(N_\gamma(x))<0$.
   
   As we saw in Section~\ref{sec:critical}, points in the critical manifold attract the fast flow if $\va'(x_i)<0$ for $i=1,2$.
   So, if $k\ge -b/2$ then $\Fix(\gamma)\cap A$ is normally hyperbolic and attracting, since $\det (N_\gamma(x)) >0$ and $\tr(N_\gamma(x))<0$.

   Finally, to see that the condition $k\ge -b/2$ is necessary for $\Fix(\gamma)$ to be normally hyperbolic, note that in $\Fix(\gamma)\cap A$ the derivative $\va'(x)=4-3x^2$ takes all values in the interval $\left(-\infty,0\right)$.
   Therefore, if $b+2k<0$ there will be some point in $\Fix(\gamma)\cap A_\varepsilon$ where $\va'(x)=1/(b+2k)$, hence $\det(N_\gamma(x))=0$. 
   At this point $0$ is an eigenvalue of $N_\gamma$, so normal hyperbolicity fails.
   Moreover, there will also be some points where either $\tr(N_\gamma(x))>0$, or where $\det(N_\gamma(x))<0$, so some trajectories contained in this set may be attracting, but not all of them.
\end{proof}

\begin{figure}
   \parbox{0.35\linewidth}{
   \begin{center}
      \includegraphics[width=\linewidth]{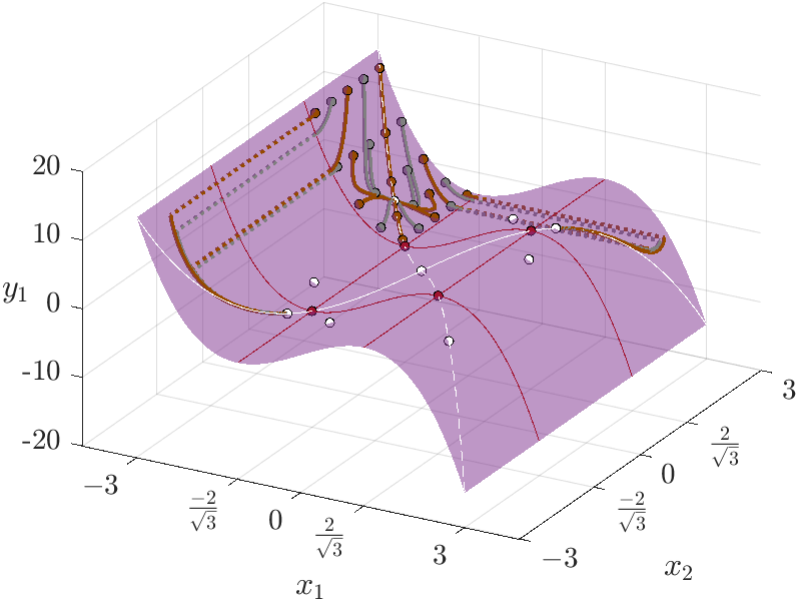}\\
      (A)            
   \end{center}
   }
   \quad
   \parbox{0.35\linewidth}{
   \begin{center}
      \includegraphics[width=\linewidth]{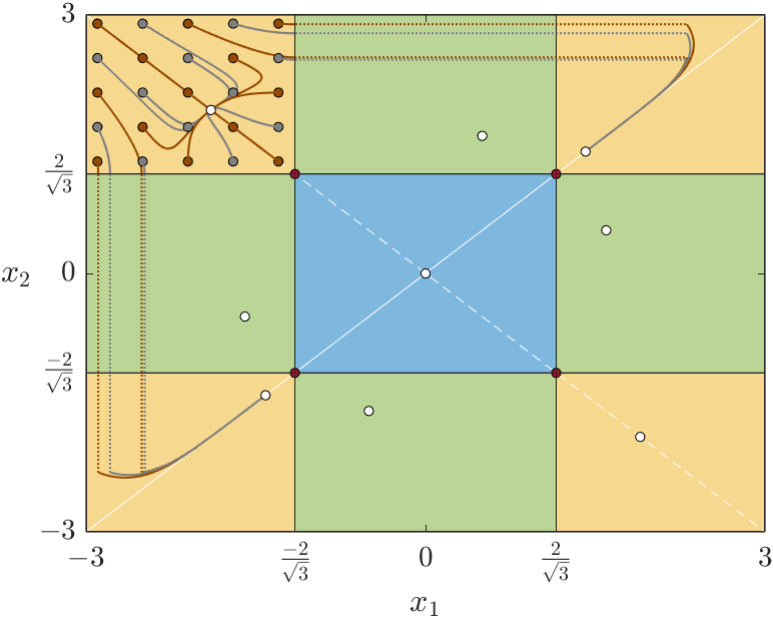}\\
      (B)            
   \end{center}
   }
   \caption{\small  Illustration of Theorems~\ref{teo:synchro} and \ref{teo:antisynchro}. 
            Several singular solutions of \eqref{eq:2FHN} starting in region $A_2$ shown in two different projections, parameters $b=0.5$, $c=0$ and $k=1$.
            Trajectories starting close to the fold line are attracted to the synchrony plane; other trajectories go to $\Fix(\delta)$.
            Conventions: trajectories with initial conditions at the brown and gray dots, slow part in brown/gray solid lines, fast part dotted lines, equilibria white dots, fold lines (red/black), double fold points red dots, intersection with synchrony/anti-synchrony planes (solid/dotted lines), respectively.  
            (A) - Projection of $C_0$ into the subspace $\left(x_1,x_2,y_1\right)$ (purple) and trajectories. 
            (B) - Same trajectories as in (A) projected into the plane $\left(x_1,x_2\right)$.
         }
   \label{fig:synchro2}
\end{figure}
        
\begin{figure}
   \parbox{0.35\linewidth}{
   \begin{center}
      \includegraphics[width=\linewidth]{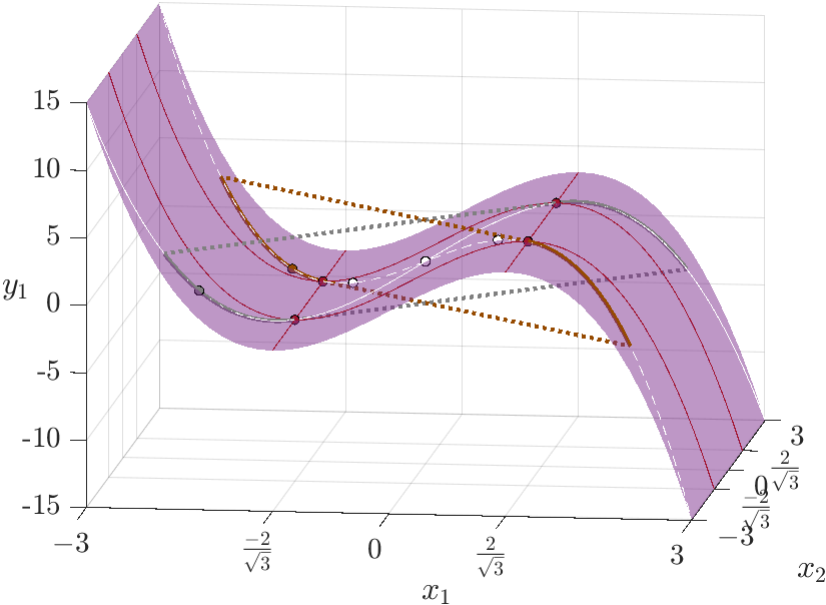}\\
      (A)            
   \end{center}
   }
   \quad
   \parbox{0.35\linewidth}{
   \begin{center}
      \includegraphics[width=\linewidth]{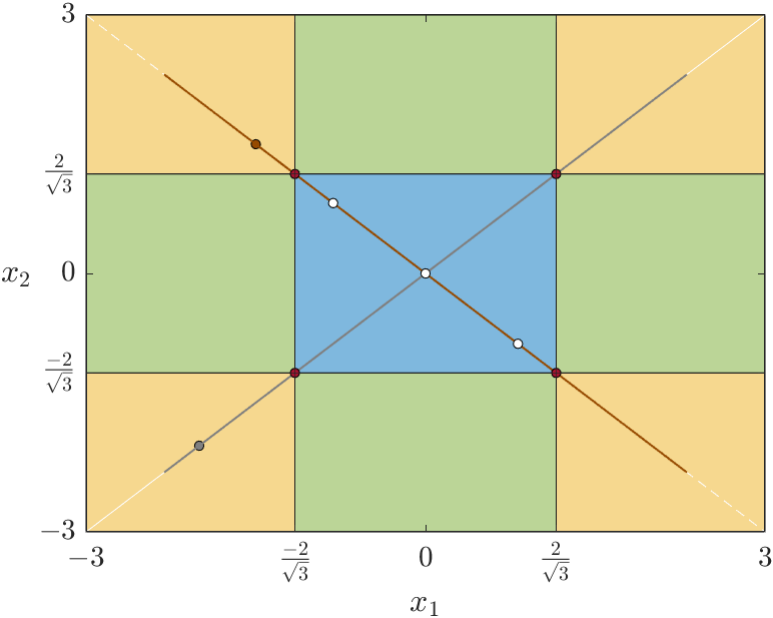}\\
      (B)            
   \end{center}
   }
   \caption{\small Illustration of Theorems~\ref{teo:synchro} and \ref{cor:synchro}. 
            Two singular solutions of \eqref{eq:2FHN} starting in  $\Fix(\gamma)\cap A_2$ and in $\Fix(\delta)\cap A_3$ shown in two different projections, parameters $b=0.1$, $c=0$ and $k=0.1$. 
            Two distinct stable solutions - bistability.
            Conventions: trajectories with initial conditions at the brown and gray dots, slow part in brown/gray solid lines, fast part dotted lines, equilibria white dots, fold lines (red/black), double fold points red dots, intersection with synchrony/anti-synchrony planes (solid/dotted lines), respectively.
            (A) - Projection of $C_0$ into the subspace $\left(x_1,x_2,y_1\right)$ (purple) and trajectories. 
            (B) - Same trajectories as in (A) projected into the plane $\left(x_1,x_2\right)$.
         }
   \label{fig:nova}
\end{figure}        

It remains to see whether the intersection of the synchrony plane with the attracting part, $A_\varepsilon$, of $C_\varepsilon$ is also attracting.  

\begin{lemma}\label{cor:synchro}
  For small $\varepsilon> 0$ the intersection $\Fix(\gamma)\cap A_\varepsilon$ is locally flow-invariant under \eqref{eq:2FHN}. 
  It is normally hyperbolic and locally attracting if $k>-b/2$.
\end{lemma}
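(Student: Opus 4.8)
The plan is to lift the normal hyperbolicity of Lemma~\ref{prop:synchro}, established for the critical manifold, to the slow manifold $A_\varepsilon$ by exploiting the block structure of the linearisation at $\Fix(\gamma)$ in the coordinates $Z=(z_1,z_2,z_3,z_4)$ of \eqref{eq:changed}, where $\Fix(\gamma)=\{z_3=z_4=0\}$ and the transverse directions $(z_3,z_4)$ decouple from the tangential ones $(z_1,z_2)$. Local flow-invariance is immediate: $\Fix(\gamma)=\mathcal{S}$ is flow-invariant by Theorem~\ref{teo:synchro} and $A_\varepsilon$ is locally flow-invariant by Corollary~\ref{cor:centreManif}, so their intersection is locally flow-invariant. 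Since \eqref{eq:2FHN} is $\gamma$-equivariant, Fenichel's Theorem allows $A_\varepsilon$ to be chosen $\gamma$-invariant; then $L_\varepsilon:=\Fix(\gamma)\cap A_\varepsilon$ is a one-dimensional manifold, $\mathcal{O}(\varepsilon)$-close to the curve $L_0=\Fix(\gamma)\cap A$, and it coincides with the attracting slow manifold of the reduced single-FHN system obtained by setting $z_3=z_4=0$ in \eqref{eq:changed}.

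To establish normal hyperbolicity of $L_\varepsilon$ in $\RR^4$, I would split its three-dimensional normal bundle as the two fast directions normal to $A_\varepsilon$ plus the one slow direction that is tangent to $A_\varepsilon$ but transverse to $L_\varepsilon$. The two fast directions are strongly attracting: by Corollary~\ref{cor:centreManif} the manifold $A_\varepsilon$ is locally attracting for the fast flow, the associated eigenvalues being $\mathcal{O}(1/\varepsilon)$ and close to $\varphi'(x_i)/\varepsilon<0$ on $A$. One of these corresponds to the $z_1$-direction inside $\mathcal{S}$ and the other to the $z_3$-direction transverse to $\mathcal{S}$; both are governed, to leading order, by $\varphi'(x)/\varepsilon$.

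For the remaining slow normal direction, the reduced vector field on $A_\varepsilon$ is $\mathcal{O}(\varepsilon)$-close to the reduced slow flow on $A$. Along $L_0$ the latter contracts the direction transverse to $\mathcal{S}$ with eigenvalue $\lambda_4=1/\varphi'(x)-(b+2k)$, obtained by slaving $z_3$ to $z_4$ in \eqref{eq:changed}, which to leading order gives $z_4=\varphi'(x)\,z_3$ and hence $\dot z_4=\lambda_4 z_4$. This is exactly the bounded eigenvalue of $N_\gamma(x)$ from Lemma~\ref{prop:synchro}. For $k>-b/2$ one has $b+2k>0$, while $\varphi'(x)<0$ on $A$, so $\lambda_4\le -(b+2k)<0$ uniformly; the slow normal direction is therefore attracting with a spectral gap bounded away from zero. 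This is the eigenvalue in which the condition $k>-b/2$ is encoded.

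Combining the two contributions, all three normal Lyapunov exponents of $L_\varepsilon$ are negative and bounded away from zero for small $\varepsilon>0$, so $L_\varepsilon$ is normally hyperbolic and locally attracting. The delicate point is the persistence of these spectral conditions under the singular perturbation: one cannot simply argue by continuity in $\varepsilon$, since one normal eigenvalue diverges as $\varepsilon\to 0$. The block-triangular structure at $\Fix(\gamma)$ is what resolves this, separating the two diverging fast eigenvalues --- handled by the Fenichel attraction of $A_\varepsilon$ in Corollary~\ref{cor:centreManif} --- from the single bounded slow eigenvalue $\lambda_4$, handled by the persistence of normal hyperbolicity of $L_0$ within the reduced slow flow. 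The strict inequality $k>-b/2$ provides precisely the uniform gap that this persistence argument requires.
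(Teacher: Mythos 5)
Your proposal is correct, and it reaches the same conclusion as the paper but handles the key step --- persistence of normal hyperbolicity for $\varepsilon>0$ --- by a genuinely different decomposition. The flow-invariance argument is identical (intersection of the flow-invariant plane $\Fix(\gamma)$ with the locally flow-invariant $A_\varepsilon$ from Corollary~\ref{cor:centreManif}). For the attraction, the paper does not split the normal bundle of the curve at all: it simply re-evaluates the same $2\times 2$ transverse block $N_\gamma(\tilde x)$ of Lemma~\ref{prop:synchro} at points $\tilde Z\in\Fix(\gamma)\cap A_\varepsilon$, and uses the $\mathcal{O}(\varepsilon)$-closeness of $A_\varepsilon$ to $A$ to conclude that $\va'(\tilde x)$ still lies in the range where $\det N_\gamma(\tilde x)>0$ and $\tr N_\gamma(\tilde x)<0$; this is a direct finite-$\varepsilon$ sign check that treats the fast ($z_3$) and slow ($z_4$) transverse directions simultaneously. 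You instead split the three-dimensional normal bundle of the curve $L_\varepsilon$ into two fast directions transverse to $A_\varepsilon$ (handled by the Fenichel attraction in Corollary~\ref{cor:centreManif}) and one slow direction inside $A_\varepsilon$, whose rate you compute by slaving $z_3$ to $z_4$, obtaining $\lambda_4=1/\va'(x)-(b+2k)$; note that $\lambda_4$ is precisely the bounded eigenvalue of $N_\gamma(x)$ in the limit $\varepsilon\to 0$, so the two arguments encode the same spectral information. What your route buys is an explicit uniform spectral gap ($\lambda_4<-(b+2k)<0$) showing exactly where the hypothesis $k>-b/2$ enters, and a clean answer to the legitimate worry that one normal eigenvalue blows up as $\varepsilon\to 0$ --- a point the paper's proof passes over silently because its sign check is performed at fixed $\varepsilon$. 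What the paper's route buys is brevity: it needs neither the fast-slow splitting of the normal bundle nor your (correct, but unproven) auxiliary claim that the $\gamma$-equivariance of \eqref{eq:2FHN} allows $A_\varepsilon$ to be chosen $\gamma$-invariant so that $L_\varepsilon$ is a genuine curve; if you keep your version, that equivariant choice of slow manifold is the one assertion you should justify (e.g.\ by applying Fenichel's construction equivariantly, or by applying it to the restriction of the system to $\Fix(\gamma)$ and identifying the resulting curve with the intersection up to exponentially small error).
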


\begin{proof} 
   Since $\gamma$ is a symmetry of \eqref{eq:2FHN} then the synchrony plane $\Fix(\gamma)$ is flow-invariant.
   By Corollary~\ref{cor:centreManif} the attracting part $A_\varepsilon$ of the manifold $C_\varepsilon$ is locally flow-invariant.
   Since both $\Fix(\gamma)$ and $C_\varepsilon$ are locally flow-invariant sets, their intersection has the same property.

   In the proof of Lemma~\ref{prop:synchro} it is shown that if $k\ge -b/2$ then, at any point $Z\in\Fix(\gamma)\cap A$, the component $N_\gamma(x)$ of $D\hat{V}(Z)$ perpendicular to $\Fix(\gamma)$ satisfies $\det \left(N_\gamma(x)\right)>0$ and $\tr \left(N_\gamma(x)\right)<0$.
   From Corollary~\ref{cor:centreManif} we know that $A_\varepsilon$ is ${\mathcal O}(\varepsilon)$ close to the synchrony plane $A$, where $\va'(x)<0$.
   Therefore, since $k\ge -b/2$, for small enough $\varepsilon$ and if $\tilde{Z}=(\tilde x,\tilde x,\tilde y,\tilde y)\in\Fix(\gamma)\cap A_\varepsilon$, then for sufficiently small $\varepsilon>0$ we have $\va(\tilde x)<1/(b+2k)$. 
   Therefore $\det \left(N_\gamma(\tilde x)\right)>0$ and $\tr \left(N_\gamma(\tilde x)\right)<0$ and the second statement follows.
\end{proof}

Equilibria in the synchrony plane satisfy both $y=\va(x)=4x-x^3$ and $x-b\va(x)-c=0$.
Therefore in $\Fix(\gamma)$ there is always at least one equilibrium and there are at most three equilibria --- details in Section 3 of \cite{GLR2024}.
By the B\'ezout Theorem there are at most 9 equilibria of \eqref{eq:2FHN} so there may be up to 3 symmetry related pairs of equilibria outside the synchrony plane.
The next result is an example of the dynamics in the synchrony plane. 

\begin{corollary}\label{cor:Relax}
   For small $\varepsilon>0$ if $b>1/4$ and $k>-1/8$ then for small values of $c$ the equations \eqref{eq:2FHN} have a synchronous asymptotically stable relaxation oscillation.
\end{corollary}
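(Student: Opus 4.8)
The plan is to exploit the fact that, \emph{on the synchrony plane, the coupled system reduces to a single FitzHugh--Nagumo equation}, and then to lift the known planar relaxation oscillation to the full four-dimensional system using normal hyperbolicity. First I would restrict \eqref{eq:2FHN} to the flow-invariant plane ${\mathcal S}=\Fix(\gamma)$. Setting $x_1=x_2=x$ and $y_1=y_2=y$ makes the coupling term $k(y_1-y_2)$ vanish, so the restricted equations are exactly \eqref{eq:FHN},
\[
   \varepsilon\dot x=4x-x^3-y,\qquad \dot y=x-by-c .
\]
Hence any relaxation oscillation of \eqref{eq:FHN} embeds as a \emph{synchronous} periodic solution of \eqref{eq:2FHN} contained in ${\mathcal S}$.

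Next I would invoke the description of the single FitzHugh--Nagumo dynamics in \cite{GLR2024}: for $b>1/4$ and $|c|$ small the planar system \eqref{eq:FHN} possesses an asymptotically stable relaxation oscillation $\Gamma$. Embedded in ${\mathcal S}$, the curve $\Gamma$ is a periodic orbit of \eqref{eq:2FHN} whose nontrivial Floquet multiplier \emph{within} ${\mathcal S}$ has modulus smaller than one, since $\Gamma$ is a stable limit cycle of the reduced planar flow. It then remains only to control the two Floquet multipliers in the directions transverse to ${\mathcal S}$.

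For the transverse directions I would use the block structure of $D\widehat V$ exhibited in \eqref{eq:DVchanged}: along $\Gamma$ the transverse variational equation is $\dot w=N_\gamma(x(t))\,w$, with $N_\gamma$ as in Lemma~\ref{prop:synchro}. The hypotheses combine cleanly here, since $b>1/4$ and $k>-1/8$ give $-b/2<-1/8<k$, hence $b+2k>0$. On the attracting sheets of ${\mathcal S}$, where $\varphi'(x)<0$, Lemma~\ref{prop:synchro} then yields $\det N_\gamma(x)>0$ and $\tr N_\gamma(x)<0$, so the transverse flow contracts there; slaving the fast transverse variable $z_3$ (Fenichel reduction, $z_3\approx z_4/\varphi'(x)$) reduces the transverse slow motion to $\dot z_4\approx\bigl(1/\varphi'(x)-(b+2k)\bigr)z_4$ with a strictly negative coefficient. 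Because $\Gamma$ is a genuine relaxation oscillation it spends ${\mathcal O}(1)$ time on the attracting sheets and only ${\mathcal O}(\varepsilon|\ln\varepsilon|)$ time near the folds, so the (exponentially strong) contraction there dominates; both transverse multipliers then have modulus below one for small $\varepsilon$, making $\Gamma$ normally hyperbolic and attracting in $\RR^4$, i.e. an asymptotically stable synchronous relaxation oscillation.

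The main obstacle is this last step: making the transverse Floquet estimate rigorous uniformly along $\Gamma$, in particular justifying the Fenichel reduction of the transverse variational system through the fold and jump regions, and controlling the at-most-polynomial expansion incurred there against the contraction accumulated on the slow attracting segments. Because $\Gamma$ crosses the fold lines $\Sigma$ transversally --- it is a relaxation oscillation, not a canard --- the standard fast--slow estimates of \cite{Kuehn} apply; alternatively, one can build an attracting invariant tube around $\Gamma$ directly from the normal hyperbolicity of ${\mathcal S}\cap A_\varepsilon$ established in Theorem~\ref{teo:synchro}, combined with an exchange-lemma argument near the folds.
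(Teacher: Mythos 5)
Your proposal follows essentially the same route as the paper: restrict to the flow-invariant synchrony plane, where the dynamics is exactly a single FHN, invoke \cite{GLR2024} for the planar asymptotically stable relaxation oscillation, and obtain transverse attraction from $k>-1/8>-b/2$ via the matrix $N_\gamma$ of Lemma~\ref{prop:synchro}, i.e. via Theorem~\ref{teo:synchro}. The only difference is one of detail: the paper simply cites the local attractivity of ${\mathcal S}\cap A_\varepsilon$ and concludes, whereas you additionally sketch the transverse Floquet estimate through the folds and jumps --- a point the paper leaves implicit.
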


\begin{proof}
   The result follows from the fact that under these conditions on the parameters $b$ and $c$ the single FHN \eqref{eq:FHN} has an asymptotically stable relaxation oscillation as shown in \cite{GLR2024}.
   This follows by the Poincar\'e-Bendixson Theorem, because for small values of the parameter $c$ and $b>1/4$ there is only one equilibrium of \eqref{eq:FHN} and it lies on the repelling part of the critical manifold. 

   The dynamics in the synchrony plane is that of a single FHN, hence \eqref{eq:2FHN} has a synchronous relaxation oscillation that, in the restriction to the synchrony plane, is asymptotically stable. 
   If $k>-1/8$ then Theorem~\ref{teo:synchro} holds, so the intersection of the synchrony plane with $A_\varepsilon$ is attracting, therefore the relaxation oscillation is asymptotically stable. 
\end{proof}

Theorem~\ref{teo:synchro} is illustrated by singular solutions shown in Figures~\ref{fig:synchro1},~\ref{fig:synchro3}, \ref{fig:synchro2} and \ref{fig:nova}.
In the example of Figure~\ref{fig:synchro3} we have $b=c=0$, so this is the interaction of two van der Pol equations (see, for instance \cite[Section~1.3]{Kuehn}).
Figures~\ref{fig:synchro2} and \ref{fig:nova} illustrate the meaning of locally attracting: the synchrony plane attracts trajectories in an open set around it, but not all trajectories. 
In the examples of the figures we have $c=0$, hence \eqref{eq:2FHN} has additional symmetry and this is explored in the next result.

\begin{theorem}\label{teo:antisynchro}
   If $c=0$ then the antisynchrony plane ${\mathcal AS}$ is flow-invariant under \eqref{eq:2FHN}. 
   For small $\varepsilon> 0$ if $c=0$ then the intersection ${\mathcal AS}\cap A_\varepsilon$ of the antisynchrony plane with the attracting part of the slow manifold $C_\varepsilon$, $A_\varepsilon$, is locally flow-invariant under \eqref{eq:2FHN}. 
   It is normally hyperbolic and locally attracting if $b>0$.
\end{theorem}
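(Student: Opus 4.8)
The plan is to mirror the proof of Theorem~\ref{teo:synchro}, replacing the swap $\gamma$ by a symmetry whose fixed-point set is the antisynchrony plane. Since $\va(x)=4x-x^3$ is odd, the reflection $\kappa(x_1,x_2,y_1,y_2)=(-x_1,-x_2,-y_1,-y_2)$ is a symmetry of \eqref{eq:2FHN} exactly when $c=0$: oddness of $\va$ makes the fast equations equivariant, and in the slow equations the linear and (homogeneous) coupling terms reverse sign consistently, leaving only the constant $c$, which is compatible iff $c=0$. As $\gamma$ is always a symmetry, the composition $\delta=\gamma\kappa$, with $\delta(x_1,x_2,y_1,y_2)=(-x_2,-x_1,-y_2,-y_1)$, is a symmetry precisely for $c=0$, and $\Fix(\delta)=\mathcal{AS}$. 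Hence, for $c=0$, the antisynchrony plane is flow-invariant, which is the first assertion. The local flow-invariance of $\mathcal{AS}\cap A_\varepsilon$ then follows verbatim as in Lemma~\ref{cor:synchro}: both $\Fix(\delta)$ and $A_\varepsilon$ (the latter by Corollary~\ref{cor:centreManif}) are locally flow-invariant, so their intersection is too.

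For normal hyperbolicity and attraction I would reuse the coordinates $Z=(z_1,z_2,z_3,z_4)$ of \eqref{eq:changed}, exploiting the pleasant fact that $\mathcal{AS}=\Fix(\gamma)^\perp$, so that in these coordinates the antisynchrony plane is precisely $\{z_1=z_2=0\}$, parametrised by $(z_3,z_4)$. Setting $c=0$ and evaluating $D\widehat V$ at a point $(0,0,2x,2y)\in\Fix(\delta)$, the oddness of $\va$ (equivalently, the evenness of $\va'$) makes the two off-diagonal blocks vanish, so the derivative is again block diagonal; now, however, the component transverse to the invariant plane is the $(z_1,z_2)$-block
\[
N_\delta(x)=\begin{pmatrix}\va'(x)/\varepsilon&-1/\varepsilon\\1&-b\end{pmatrix},
\qquad
\det(N_\delta(x))=\frac{1-b\,\va'(x)}{\varepsilon},
\qquad
\tr(N_\delta(x))=\frac{\va'(x)}{\varepsilon}-b.
\]
This is exactly the synchrony block $N_\gamma(x)$ of \eqref{eq:DVchanged} with the coupling term $2k$ set to $0$, reflecting that antisynchrony removes the interaction in the transverse direction.

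The stability conclusion is then immediate. On $A$ one has $\va'(x)<0$, so for $b>0$ we get $b\,\va'(x)<0$, whence $\det(N_\delta(x))>0$ and $\tr(N_\delta(x))<0$; thus $\Fix(\delta)\cap A$ is normally hyperbolic and locally attracting, exactly as in Lemma~\ref{prop:synchro} (the same computation in fact shows $b\ge 0$ is the sharp condition on $A$). Passing to $A_\varepsilon$, Corollary~\ref{cor:centreManif} places every point $\tilde Z=(\tilde x,-\tilde x,\tilde y,-\tilde y)\in\Fix(\delta)\cap A_\varepsilon$ within $\mathcal{O}(\varepsilon)$ of $A$, where $\va'<0$. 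For $b>0$ the inequality $\det(N_\delta(\tilde x))>0$ reads $\va'(\tilde x)<1/b$, which holds for small $\varepsilon>0$ because $\va'(\tilde x)$ is close to a negative value; combined with $\tr(N_\delta(\tilde x))<0$ this yields the second assertion. This is the exact analogue of Lemma~\ref{cor:synchro} with $b+2k$ replaced by $b$, and it is the reason the hypothesis is the strict $b>0$ (so that $1/b$ is a finite positive bound).

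The matrix computation is routine sign-tracking parallel to the synchrony case, so the only genuinely delicate step, and the one I would treat most carefully, is the first: identifying the correct symmetry $\delta=\gamma\kappa$ and verifying that its equivariance forces $c=0$. The remaining novelty is merely bookkeeping: the invariant plane is now the orthogonal complement of the synchrony plane, so the roles of the two diagonal blocks of $D\widehat V$ are interchanged and the transverse block is the synchrony block with $k=0$.
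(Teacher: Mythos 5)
Your proof is correct and follows essentially the same route as the paper: the same symmetry $\delta(x_1,x_2,y_1,y_2)=(-x_2,-x_1,-y_2,-y_1)$ (which the paper introduces directly rather than as $\gamma\kappa$), the same coordinates $Z$ from Lemma~\ref{prop:synchro} with $\Fix(\delta)=\{z_1=z_2=0\}$, the identical transverse block $N_\delta(x)$ with its determinant and trace, and the same passage to $A_\varepsilon$ via the argument of Lemma~\ref{cor:synchro}. The only cosmetic differences are your derivation of $\delta$ as a composition with $-\mathrm{Id}$ and your explicit use of the evenness of $\va'$ to justify the block-diagonal structure, which the paper states more tersely by observing that $D\widehat V$ has the same expression at $(2x,2y,0,0)$ and $(0,0,2x,2y)$.
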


\begin{proof}
   If $c=0$, then the equations \eqref{eq:2FHN} have the additional symmetry 
   $$
   \delta(x_1,x_2,y_1,y_2)=(-x_2,-x_1,-y_2,-y_1).
   $$
   As in the proof of Theorem~\ref{teo:synchro} the plane $\{(x,-x,y,-y)\quad x,y\in\RR\}=\Fix(\delta)={\mathcal AS}$ of points fixed by $\delta$ is flow-invariant.
   It remains to check that it is normally hyperbolic and to see where it is attracting.
   In the coordinates $z_j$ used in the proof of Lemma~\ref{prop:synchro} the subspace $\Fix(\delta)$ is defined by the equalities $z_1=z_2=0$, $z_3=2x$ and $z_4=2y$.
   The expressions for the two matrices $\left.D{ \widehat{V}}(z_1,z_2,z_3,z_4)\right|_{(2x,2y,0,0)}$ and $\left.D{  \widehat{V}}(z_1,z_2,z_3,z_4)\right|_{(0,0,2x,2y)}$ are the same.
   Hence, the component $N_\delta(x)$ of $D{\widehat{V}}$ in the directions perpendicular to $\Fix(\delta)$ is given by:
   $$
   N_\delta(x)=\begin{pmatrix}\va'(x)/\varepsilon&-1/\varepsilon\\1&-b
   \end{pmatrix}
   $$
   and then $$\det(N_\delta(x))=-\frac{1}{\varepsilon} (b \va'(x)-1) \quad \text{and}\quad \tr(N_\delta(x))=\frac{\va'(x)}{\varepsilon}-b.$$
   Using the arguments of the proof of Lemma~\ref{prop:synchro} it follows that $\Fix(\delta)\cap A$ is normally hyperbolic if and only if $b\ge 0$ and in this case it is locally attracting.
   The last statement follows from the same arguments as Lemma~\ref{cor:synchro}.
\end{proof}

It follows from Theorems~\ref{teo:synchro} and \ref{teo:antisynchro} that if $c=0$ and both $b>0$ and $k>-b/2$ there is \emph{bistability} i.e., the system has two stable coexisting states. 
In particular, if $0<b<1/4$ and $c=0$ then the uncoupled FHN in equations \eqref{eq:FHN} have an asymptotically stable periodic solution, as established in \cite{GLR2024} for the proof of Corollary~\ref{cor:Relax}. 
In this case, provided $k> -b/2$, equations~\eqref{eq:2FHN} have two stable periodic solutions each one lying in one of the fixed-point subspaces for the two symmetries. 
Moreover, the bistability persists as shown in the next result. 

\begin{corollary}\label{cor:bistabilityPersists}
   If $b> 0$ then for sufficiently small $c\ne 0$ and for every $r>0$ there is a $C^r$ locally flow-invariant locally attracting manifold for \eqref{eq:2FHN} close to $\Fix(\delta)\cap A_\varepsilon$.
   If moreover $k>-b/2$, then there is bistability in the sense that this manifold and the locally invariant manifold $\Fix(\gamma)\cap A_\varepsilon$ are simultaneously locally attracting.
\end{corollary}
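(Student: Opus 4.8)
The plan is to treat the two coexisting attractors separately, because the synchrony side needs no perturbation argument at all. The synchrony plane $\Fix(\gamma)$ is flow-invariant for \emph{every} value of $c$, since $\gamma$ is a symmetry of \eqref{eq:2FHN} independently of $c$. Consequently Theorem~\ref{teo:synchro} applies verbatim when $c\ne 0$: as soon as $k>-b/2$, the intersection $\Fix(\gamma)\cap A_\varepsilon$ is a locally flow-invariant, normally hyperbolic and locally attracting manifold. So the only real work is to produce the second attractor near $\Fix(\delta)\cap A_\varepsilon$, which ceases to be invariant once $c\ne 0$ breaks the symmetry $\delta$.

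For the antisynchrony side I would argue by persistence of normal hyperbolicity. At $c=0$ and $b>0$, Theorem~\ref{teo:antisynchro} guarantees that $M_0=\Fix(\delta)\cap A_\varepsilon$ is a locally flow-invariant, normally hyperbolic and locally attracting manifold for \eqref{eq:2FHN}; its normal directions contract, the two transverse to $\Fix(\delta)$ being governed by $N_\delta(x)$ (with $\det>0$ and $\tr<0$ on $A$) and the remaining fast direction by $\va'(x)/\varepsilon<0$. The parameter $c$ enters the vector field of \eqref{eq:2FHN} only through the constant term $-c$ in the slow equations, hence affinely; in particular it leaves the fast equations, the critical manifold $C_0$, the fold set $\Sigma$ and the regions $A,S,R$ untouched, and for small $|c|$ the field is a small $C^r$-perturbation of the $c=0$ field. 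Fenichel's Theorem \cite{Fenichel} (the same tool used in Corollary~\ref{cor:centreManif} and Lemma~\ref{cor:synchro}) then yields, for each $r$ and each sufficiently small $c\ne 0$, a $C^r$ locally flow-invariant manifold close to $M_0$; since the normal eigenvalues depend continuously on $c$ and have strictly negative real part at $c=0$, they keep this sign for small $c$, so the perturbed manifold is again normally hyperbolic and locally attracting.

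Finally, to conclude bistability I would check that the two attractors are genuinely distinct. Because $\Fix(\gamma)\cap\Fix(\delta)=\{(0,0,0,0)\}$ and the origin lies in the repelling region $R$, the sets $\Fix(\gamma)\cap A_\varepsilon$ and $\Fix(\delta)\cap A_\varepsilon$ are disjoint; for small $|c|$ the persisted manifold stays close to the latter and hence remains disjoint from the former, so the two manifolds attract disjoint open neighbourhoods and coexist as stated. The main obstacle is technical rather than conceptual: $M_0$ is \emph{not} compact --- it is a manifold with boundary on the fold lines $\Sigma$, where normal hyperbolicity degenerates --- so the persistence theorem must be applied to compact overflowing sub-pieces bounded away from $\Sigma$, with a uniform spectral gap, exactly as is already implicit in the \emph{local} formulations used throughout Section~\ref{sec:critical} and Section~\ref{sec:synchro}. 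Keeping every statement local (local flow-invariance, local attraction) is what makes this routine, and the admissible threshold on $|c|$ will in general depend on $r$, which is consistent with the order of quantifiers in the statement.
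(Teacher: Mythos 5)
Your proposal is correct and follows essentially the same route as the paper: the paper's (very terse) proof likewise combines the normal hyperbolicity established in Theorems~\ref{teo:synchro} and \ref{teo:antisynchro} with Fenichel's persistence theorem for normally hyperbolic locally invariant manifolds \cite{Fenichel71}. Your additional observations --- that only the $\Fix(\delta)$ side needs a perturbation argument since $\gamma$ remains a symmetry for $c\ne 0$, that the two attractors are disjoint because $\Fix(\gamma)\cap\Fix(\delta)$ is the origin in $R$, and that persistence must be applied on compact pieces away from $\Sigma$ --- are correct elaborations of details the paper leaves implicit.
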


\begin{proof}
   The result follows from the normal hyperbolicity established in Theorems~\ref{teo:synchro} and \ref{teo:antisynchro} and from the persistence of normally hyperbolic locally invariant manifolds proved by Fenichel in \cite{Fenichel71} (see also \cite[Section 2.2]{Kuehn}).
\end{proof}

Similarly, from the persistence of normally hyperbolic invariant manifolds we obtain persistence of the dynamics with respect to the parameters. 
For this we need a definition.

\begin{definition}\label{def:appsynchro}
   Let $\xi(t)=\left(x_1(t),x_2(t),y_1(t),y_2(t)\right)$ be a solution of a system of two coupled equations
   $$
   \left\{\begin{array}{rcl}
   \varepsilon\dot x_i&=&f_i(x_1,x_2,y_1,y_2)\\
   \dot y_i&=&g_i(x_1,x_2,y_1,y_2)
   \end{array}\right.
   \qquad i=1,2 .
   $$
   Then we define:
   \begin{itemize}
      \item $\xi(t)$ is in {\em approximate synchrony} with precision $\Delta>0$ if there is a $t_0\in\RR$ such that for all $t>t_0$ we have $\left|x_1(t)-x_2(t)\right|<\Delta$ and $\left|y_1(t)-y_2(t)\right|<\Delta$;
      \item $\xi(t)$ is in {\em approximate antisynchrony} with precision $\Delta>0$ if there is a $t_0\in\RR$ such that for all $t>t_0$ we have $\left|x_1(t)+x_2(t)\right|<\Delta$ and $\left|y_1(t)+y_2(t)\right|<\Delta$.
   \end{itemize}
\end{definition}

For the original system \eqref{eq:2FHN} we obtain from Corollary~\ref{cor:bistabilityPersists} a result analogous to Corollary~\ref{cor:Relax}.

\begin{corollary}\label{cor:RelaxAnti}
   For small $\varepsilon>0$ if $b>1/4$ then for small values of $c$ the equations \eqref{eq:2FHN} have an asymptotically stable relaxation oscillation in approximate antisynchrony with precision $\Delta={\mathcal O}(c)$.
   If moreover $k>-1/8$ then the attracting relaxation oscillations in synchrony and antisynchrony (in ${\mathcal S}$ and ${\mathcal AS}$) coexist.
\end{corollary}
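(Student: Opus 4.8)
The plan is to treat the exactly antisynchronous case $c=0$ first and then perturb. When $c=0$, Theorem~\ref{teo:antisynchro} tells us that ${\mathcal{AS}}=\Fix(\delta)$ is flow-invariant, so I would restrict \eqref{eq:2FHN} to this plane by setting $x_2=-x_1$, $y_2=-y_1$. The coupling term $-k(y_1-y_2)$ then becomes $-2ky_1$, and the restricted system is a single FitzHugh--Nagumo equation of the form \eqref{eq:FHN} in $(x_1,y_1)$ with effective damping $b+2k$ and zero offset. The single-FHN relaxation oscillation result from \cite{GLR2024} used in the proof of Corollary~\ref{cor:Relax} then applies to this reduced equation and yields, for small $\varepsilon$, an asymptotically stable relaxation oscillation living in ${\mathcal{AS}}$. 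Since $b>1/4>0$, Theorem~\ref{teo:antisynchro} gives that ${\mathcal{AS}}\cap A_\varepsilon$ is normally hyperbolic and transversely attracting; hence this periodic orbit is a normally hyperbolic attracting periodic orbit of the full four-dimensional system \eqref{eq:2FHN}, its two transverse Floquet exponents being governed by $N_\delta$ and having negative real part.

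Next I would turn on a small $c\ne 0$. The parameter $c$ enters \eqref{eq:2FHN} linearly, through the $-c$ term in each slow equation, so passing from $c=0$ to $c\ne 0$ is an $\mathcal{O}(c)$ perturbation of the vector field. By Corollary~\ref{cor:bistabilityPersists} (itself a consequence of Fenichel's persistence theorem \cite{Fenichel}) there is, for $b>0$ and small $c\ne 0$, a $C^r$ locally attracting, locally flow-invariant manifold lying $\mathcal{O}(c)$ close to $\Fix(\delta)\cap A_\varepsilon$. Since the relaxation oscillation found above is normally hyperbolic, it persists as a nearby attracting periodic orbit carried by this manifold, displaced by only $\mathcal{O}(c)$ from ${\mathcal{AS}}$. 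Consequently, along the perturbed orbit both $|x_1+x_2|$ and $|y_1+y_2|$ are $\mathcal{O}(c)$, which is exactly approximate antisynchrony with precision $\Delta=\mathcal{O}(c)$ in the sense of Definition~\ref{def:appsynchro}. This proves the first assertion.

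For the coexistence statement I would invoke Corollary~\ref{cor:Relax}. The synchrony plane ${\mathcal S}=\Fix(\gamma)$ is flow-invariant for \emph{every} value of $c$, and on it \eqref{eq:2FHN} reduces to a single FHN with parameters $(b,c)$; the hypothesis $b>1/4$ together with $k>-1/8$ (which, since $b>1/4$, forces $k>-b/2$) places us exactly in the setting of Corollary~\ref{cor:Relax}, giving a synchronous asymptotically stable relaxation oscillation in ${\mathcal S}$. This orbit and the antisynchronous orbit lie in the distinct invariant sets ${\mathcal S}$ and the $\mathcal{O}(c)$-perturbation of ${\mathcal{AS}}$, which meet only at the origin, so the two attractors are genuinely different. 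Hence both locally attracting relaxation oscillations coexist, establishing bistability.

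The step I expect to be most delicate is the $\mathcal{O}(c)$ precision estimate: Corollary~\ref{cor:bistabilityPersists} guarantees a persistent attracting manifold, but to control the \emph{displacement} of the periodic orbit I must combine the $C^r$-smooth dependence of normally hyperbolic invariant manifolds on parameters with the fact that the perturbation in $c$ is linear, and then appeal to persistence of normally hyperbolic \emph{periodic orbits} rather than merely of the manifold. One should also check that the transverse attractivity is uniform along the whole orbit, so that the basin does not pinch, and keep in mind that the reduced relaxation-oscillation condition is naturally read on the effective damping $b+2k$ produced by the reduction rather than on $b$ alone.
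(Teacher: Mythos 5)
Your argument is, in structure, exactly the paper's: the paper offers no standalone proof of this corollary, deriving it in the single sentence that precedes it from Corollary~\ref{cor:bistabilityPersists} ``analogously to Corollary~\ref{cor:Relax}'', and your write-up is that derivation made explicit --- restriction to $\Fix(\delta)$ at $c=0$, the single-FHN result of \cite{GLR2024} inside the plane, Theorem~\ref{teo:antisynchro} for transverse attraction, Fenichel persistence for small $c\ne 0$ with an ${\mathcal O}(c)$ displacement, and Corollary~\ref{cor:Relax} (noting that $b>1/4$ and $k>-1/8$ imply $k>-b/2$) for coexistence.

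However, the caveat you defer to your last sentence is a genuine gap, not a loose end. On ${\mathcal AS}$ the reduced system is $\varepsilon\dot x=\va(x)-y$, $\dot y=x-(b+2k)y$: the effective damping is $b+2k$, and the hypothesis $b>1/4$ says nothing about it. The Poincar\'e--Bendixson argument quoted from \cite{GLR2024} needs every equilibrium of this reduced FHN to lie on the repelling branch $|x|<2/\sqrt{3}$ of its critical manifold. With offset $0$ the nontrivial equilibria satisfy $x^2=4-1/(b+2k)$, which exceeds $4/3$ as soon as $b+2k>3/8$; those equilibria then sit on the attracting branches and are asymptotically stable (trace $\va'(x)/\varepsilon-(b+2k)<0$, determinant $\left(1-(b+2k)\va'(x)\right)/\varepsilon>0$), and the slow flow on each attracting branch converges to them instead of reaching the folds. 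So for, say, $b=0.3$ and $k=1$ there is no relaxation oscillation in ${\mathcal AS}$ at all, and your step ``the single-FHN result applies to the reduced equation'' fails; no argument along these lines can close this without an additional hypothesis of the form $0\le b+2k<3/8$ (keeping $b>0$ for the transverse attraction governed by $N_\delta$). To be fair, this mismatch is inherited from the paper itself: the coupling term vanishes on ${\mathcal S}$ but contributes $-2ky_i$ on ${\mathcal AS}$, and the paper's statement and one-line derivation overlook this exactly as your main text does. What you have effectively done is reconstruct the intended proof and, in your closing remark, locate its flaw; the repair is to restate the corollary with the condition read on $b+2k$ rather than on $b$ alone.
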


If the two coupled FHN have slightly different parameter values, then synchrony and antisynchrony still hod in the approximate sense above, as in the next result.

\begin{corollary}\label{cor:synchroPersists}
   Consider a perturbation of \eqref{eq:2FHN} where the equation for $\dot y_2$ is replaced by
   $$
   \dot{y}_2 =x_2-\tilde by_2-\tilde c-\tilde k(y_2-y_1)\ .            
   $$
   Then if $k>-b/2$ and $\tilde k> -\tilde b/2$, for small $\varepsilon>0$ and for sufficiently small $|b-\tilde b|$, $|c-\tilde c|$, $|k-\tilde k|$ the perturbed system has stable solutions with approximate synchrony with precision $\Delta={\mathcal O}\left(|b-\tilde b|, |c-\tilde c|,|k-\tilde k|\right)$.

   If moreover $b> 0$ and $\tilde b>0$ then, for sufficiently small $c$ and $\tilde c$, the perturbed system has stable solutions with approximate antisynchrony with precision $\Delta={\mathcal O}\left(|b-\tilde b|, |c-\tilde c|,|k-\tilde k|\right)$, and hence it has bistability.
\end{corollary}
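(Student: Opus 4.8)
The plan is to recognise the perturbed system as a small perturbation of one of the two symmetric systems already analysed --- the one carrying the symmetry $\gamma$ for synchrony and the one carrying $\delta$ for antisynchrony --- and then to transfer the conclusions of Theorems~\ref{teo:synchro} and \ref{teo:antisynchro} across the perturbation by the persistence of normally hyperbolic locally invariant manifolds (Fenichel \cite{Fenichel71}), exactly as in Corollary~\ref{cor:bistabilityPersists}. The point is that normal hyperbolicity together with local attraction are open conditions, and the symmetry-breaking introduced by replacing $(b,c,k)$ by $(\tilde b,\tilde c,\tilde k)$ in the second slow equation is a perturbation of the vector field that is affine in the slow variables with coefficients $b-\tilde b$, $c-\tilde c$ and $k-\tilde k$; on the bounded region of $\RR^4$ that contains all the recurrent dynamics this perturbation, and all its derivatives, are therefore $\mathcal{O}\!\left(|b-\tilde b|,|c-\tilde c|,|k-\tilde k|\right)$ in the $C^r$ sense.

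For the synchrony statement I would introduce the averaged parameters $\bar b=(b+\tilde b)/2$, $\bar c=(c+\tilde c)/2$, $\bar k=(k+\tilde k)/2$ and take as reference the symmetric system \eqref{eq:2FHN} in which both cells carry $\bar b,\bar c,\bar k$. The perturbed field differs from this reference only in the two slow equations, each by an affine term with coefficients $\pm\tfrac12(b-\tilde b)$, $\pm\tfrac12(c-\tilde c)$, $\pm\tfrac12(k-\tilde k)$, so it is $C^r$-close to the reference. The key observation, and the reason \emph{both} coupling hypotheses appear, is that adding the strict inequalities $k>-b/2$ and $\tilde k>-\tilde b/2$ gives $\bar k>-\bar b/2$; hence by Theorem~\ref{teo:synchro} the set $\Fix(\gamma)\cap A_\varepsilon$ is normally hyperbolic and locally attracting for the averaged reference. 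Fenichel's theorem then yields, for fixed small $\varepsilon$ and sufficiently small $|b-\tilde b|,|c-\tilde c|,|k-\tilde k|$, a $C^r$ locally flow-invariant, locally attracting manifold $\widetilde M$ for the perturbed system lying $\mathcal{O}\!\left(|b-\tilde b|,|c-\tilde c|,|k-\tilde k|\right)$-close to $\Fix(\gamma)\cap A_\varepsilon$ in the Hausdorff metric. Since $x_1-x_2=y_1-y_2=0$ identically on $\Fix(\gamma)=\{x_1=x_2,\ y_1=y_2\}$, on $\widetilde M$ one has $|x_1-x_2|,\,|y_1-y_2|=\mathcal{O}\!\left(|b-\tilde b|,|c-\tilde c|,|k-\tilde k|\right)$, and any trajectory attracted to $\widetilde M$ eventually satisfies the inequalities of Definition~\ref{def:appsynchro} with $\Delta$ of that order, which is approximate synchrony.

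The antisynchrony statement is entirely parallel, the only change being the reference. Here the relevant symmetry $\delta$ is exact for a two-cell system precisely when the cells share $b$ and $k$ and have opposite constants; the cleanest choice is the symmetric reference with averaged $\bar b,\bar k$ and $c=0$ in both cells, which falls directly under Theorem~\ref{teo:antisynchro}. The hypotheses $b>0$ and $\tilde b>0$ give $\bar b>0$, so $\Fix(\delta)\cap A_\varepsilon$ is normally hyperbolic and locally attracting for the reference, and the smallness of $c$ and $\tilde c$ keeps the reduced one-cell dynamics on the repelling branch so that this attracting object carries the genuine stable solution. Persistence then produces a nearby attracting manifold close to $\Fix(\delta)=\{x_1=-x_2,\ y_1=-y_2\}$, giving approximate antisynchrony; combining it with the synchrony manifold of the previous paragraph, both simultaneously locally attracting under the joint hypotheses, yields bistability.

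The main obstacle I anticipate is not the persistence machinery, which is immediate once the symmetric reference and the $C^r$-smallness of the perturbation are in place, but the honest bookkeeping of the precision $\Delta$. One has to be careful that the antisynchrony perturbation is measured against a genuinely $\delta$-symmetric reference: with the $c=\tilde c=0$ reference the deviation is $\mathcal{O}\!\left(|b-\tilde b|,|k-\tilde k|,|c|,|\tilde c|\right)$, whereas a reference with opposite nonzero constants $\pm\tfrac12(c-\tilde c)$ produces the tighter $\mathcal{O}\!\left(|b-\tilde b|,|k-\tilde k|,|c+\tilde c|\right)$ --- in either case small because $c,\tilde c$ are small, but not literally the $|c-\tilde c|$ one might read off by analogy with the synchrony case, so this dependence should be stated with care. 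One must also confirm that the Fenichel closeness constant can be taken uniform in the parameters over the relevant range for the fixed small $\varepsilon$, and that the bounded confining region is common to the reference and all nearby perturbed systems so that the affine perturbation is uniformly $C^r$-small there.
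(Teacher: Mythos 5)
Your proposal is correct and follows essentially the same route as the paper: both rest on the normal hyperbolicity and local attraction of $\Fix(\gamma)\cap A_\varepsilon$ and $\Fix(\delta)\cap A_\varepsilon$ established in Theorems~\ref{teo:synchro} and \ref{teo:antisynchro}, combined with Fenichel's persistence theorem \cite{Fenichel71} applied to the symmetry-breaking perturbation, which yields a nearby attracting locally invariant manifold whose Hausdorff distance to the fixed-point subspace bounds the precision $\Delta$. Your averaged reference system and your observation that the antisynchrony precision genuinely depends on $|c|,|\tilde c|$ rather than just $|c-\tilde c|$ (consistent with Corollary~\ref{cor:RelaxAnti}) are careful refinements that the paper's terse proof does not spell out, but they do not alter the underlying argument.
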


\begin{proof}
   From the normal hyperbolicity established in Theorems~\ref{teo:synchro} and \ref{teo:antisynchro} and from Fenichel's theorem \cite{Fenichel71} it follows that for sufficiently small $|b-\tilde b|$, $|c-\tilde c|$ and $|k-\tilde k|$ if $k> -b/2$ and $\tilde k> -\tilde b/2$ there is an attracting normally hyperbolic locally invariant manifold for the perturbed system close to $\Fix(\gamma)\cap A_\varepsilon$.
   Also if $b> 0$ and $\tilde b> 0$ then, for sufficiently small $c$ and $\tilde c$, the perturbed system has an attracting normally hyperbolic locally invariant manifold close to $\Fix(\delta)\cap A_\varepsilon$.
\end{proof}

\section{Canards and mixed-mode oscillations}\label{sec:MMOs}
In Section~\ref{sec:FHN} we stated that, in general, solutions of a fast-slow system evolve close to the slow manifold under the slow equations and jump out of it when they reach the fold points, where the slow manifold becomes unstable. 
However, as remarked in Section~\ref{sec:critical} there are special situations when trajectories called {\em canards} may follow the unstable part of the slow manifold for a considerable amount of time, as drawn in Figure~\ref{fig:Canard2d}.
In geometric terms a canard solution corresponds to the intersection of an attracting and a repelling slow manifold near a non-hyperbolic point of $\Sigma$, the set of fold lines defined in \eqref{def:Sigma}.
In this section, we establish some hypotheses ensuring the existence of canards for \eqref{eq:2FHN} and explore some of the consequences for the dynamics.
A description of the dynamics of mixed-mode oscillations associated to canards in general can be found in Desroches {\sl et al.} \cite{Desroches2012} and for the forced van der Pol equations in Guckenheimer \cite{Guckenheimer2025}.

\begin{figure}
   \includegraphics[width=0.25\linewidth]{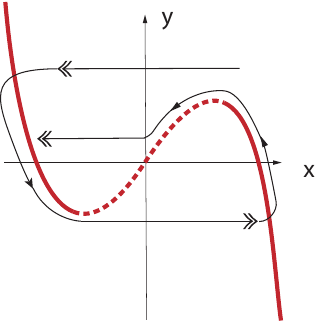}
   \caption{\small Schematic representation of a {\em canard}: a trajectory that follows the unstable part of the slow manifold for some time, before jumping out of it.
            Here the critical manifold is the thick red curve, solid in the attracting part and dashed in the repelling part.
         }
   \label{fig:Canard2d}
\end{figure}

\begin{definition}
   \label{def:canard}
   A trajectory that, after starting $\mathcal{O}(\varepsilon)$ close to the attracting region of the slow manifold, remains $\mathcal{O}(\varepsilon)$ close to the non-attracting region of the slow manifold for a time of order $\mathcal{O}(1)$ is called a {\em canard}.
\end{definition}

Intuitively, we are looking at a trajectory that evolves near the attracting part of the critical manifold.
Then it reaches a fold point, where the critical manifold changes from attracting to repelling with respect to the fast equations.
A canard happens if around this point the fast equations not only have an equilibrium but  are specially slow.
In this way the slow equation dominates the dynamics and the trajectory continues to follow close to the critical manifold for some time, near its unstable part, like in Figure~\ref{fig:Canard2d}.
In a more rigorous setting, Szmolyan and Wechselberger \cite{SW01}, and Krupa {\sl et al.} \cite{Krupa2014} have established that, for one-dimensional and two-dimensional slow equations, canards appear around some points where, after a rescaling that we will describe below, the slow equations have an equilibrium at a fold point.

\subsection{An open set of parameters where canards occur}\label{sec:foldedEq}
In order to find the canards we start by rewriting the slow equations in terms of the fast variables by differentiating implicitly the condition $y_i=\va(x_i)$, $i\in \{1,2\}$ that defines the critical manifold, to yield
\begin{equation}\label{eq:slowx1}
   \va'(x_i)\dot x_i= \dot y_i=x_i-(b+k)\va(x_i)+k\va(x_j)-c
   \qquad i,j\in\{1,2\}, \quad i\ne j.
\end{equation}

A point $X^*= (x_1^*,x_2^*,y_1^*,y_2^*)$ lies on the fold lines $\Sigma$ if and only if either $\va'(x_1^*)=0$ or $\va'(x_2^*)=0$.
Without loss of generality we take $\va'(x_1^*)\ne 0$ and $\va'(x_2^*)=0$, hence $x_2^*= 2\sigma/\sqrt{3}$, $\sigma=\pm 1$, the case $\va'(x_1^*)= 0$ and $\va'(x_2^*)\ne 0$ being identical, due to the symmetry.
The case $\va'(x_1^*)=0=\va'(x_2^*)$ is treated separately in Subsection \ref{subsec:doublefolds} below.

Since $\va'(x_2^*)=0$, the equation obtained from $\dot y_2$ in \eqref{eq:slowx1} yields no dynamical information at $x_2=x_2^*$.
We overcome this by a time rescaling of $\tau=t/\va'(x_2)$, that is singular at $x_2^*$ since $\va'(x_2^*)=0$, as mentioned above. 
Writing $dx_i/d\tau=x_i'$ the equations \eqref{eq:slowx1} transform  into:
\begin{equation}\label{eq:slowx2}
   \left\{\begin{array}{lclcl}
   x_1'&=&\dfrac{\va'(x_2)}{\va'(x_1)}\left(
   x_1-(b+k)\va(x_1)+k\va(x_2)-c\right)&=&H_1(x_1,x_2)\\
   x_2'&=&x_2-(b+k)\va(x_2)+k\va(x_1)-c&=&H_2(x_1,x_2) \ .
   \end{array}\right.
\end{equation}
Let $H(x_1,x_2)=\left(H_1(x_1,x_2),H_2(x_1,x_2)\right)$ for $(x_1, x_2)\in\RR^2\backslash\{(x_1, x_2): \varphi'(x_1)=0\}$ be the vector field corresponding to \eqref{eq:slowx2}.

If the fold point $X^*$ is also an equilibrium of \eqref{eq:slowx2} then the fast equation is slow enough around it to yield a canard.
However, we need to make sure that $X^*$ lies at the boundary of the of the attracting part $A$ of the critical manifold $C_0$.
Given the choice $x_2^*= 2\sigma/\sqrt{3}$ this imposes conditions on $x_1^*$, that we deal with in the next result.

\begin{lemma}\label{lema:foldEq}
   For every $k\ne 0$ and for any choice of the parameters $b$ and $c$ there is an equilibrium $ (x_1^*,x_2^*)$ of \eqref{eq:slowx2} with $x_2^*= 2\sigma/\sqrt{3}$, $\sigma=\pm 1$ and $\left| x_1^*\right|>\dfrac{2}{\sqrt{3}}$.
   Therefore the point $X^*= (x_1^*,x_2^*,\va(x_1^*), \va(x_2^*))$ lies at the boundary of the attracting part $A$ of  the critical manifold $C_0$.
\end{lemma}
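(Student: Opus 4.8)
The plan is to exploit the fact that the choice $x_2^*=2\sigma/\sqrt{3}$ forces $\va'(x_2^*)=0$, which collapses the equilibrium condition for \eqref{eq:slowx2} to a single scalar equation in $x_1^*$. Indeed, at any point with $x_2=x_2^*$ and $\va'(x_1)\ne 0$, the factor $\va'(x_2)$ in the numerator of $H_1$ vanishes, so $H_1(x_1,x_2^*)=0$ automatically. Thus $(x_1^*,x_2^*)$ is an equilibrium of \eqref{eq:slowx2} precisely when $H_2(x_1^*,x_2^*)=0$, provided $\va'(x_1^*)\ne 0$ so that $H$ is defined there (recall $H$ is only defined off $\{\va'(x_1)=0\}$).

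First I would substitute $x_2^*=2\sigma/\sqrt{3}$ and compute $\va(x_2^*)=16\sigma/(3\sqrt{3})$ (using $\sigma^3=\sigma$), rewriting the equation $H_2(x_1^*,x_2^*)=0$ as $k\,\va(x_1^*)=V_0$, where
$$
V_0 = c-\frac{2\sigma}{\sqrt{3}}+(b+k)\frac{16\sigma}{3\sqrt{3}}
$$
is an explicit constant depending only on $b,c,k,\sigma$. Since $k\ne 0$ by hypothesis, this is equivalent to the scalar equation $\va(x_1^*)=V_0/k$, which I denote $\va(x_1^*)=v_0$.

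Next I would show that $\va(x)=4x-x^3$ attains every real value on the set $\{|x|>2/\sqrt{3}\}$. On the branch $(2/\sqrt{3},+\infty)$ the map $\va$ is strictly decreasing, since $\va'(x)=4-3x^2<0$ there, with range $(-\infty,16/(3\sqrt{3}))$; on the branch $(-\infty,-2/\sqrt{3})$ it is likewise strictly decreasing, with range $(-16/(3\sqrt{3}),+\infty)$. Because $16/(3\sqrt{3})>-16/(3\sqrt{3})$, these two half-lines overlap and their union is all of $\RR$, so for any target $v_0$ at least one branch supplies a solution $x_1^*$ with $|x_1^*|>2/\sqrt{3}$ as a strict inequality. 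This strict inequality is exactly what I need: it guarantees $\va'(x_1^*)\ne 0$, validating the reduction above, and it places $X^*$ in the interior of the $x_1$-attracting region.

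Finally, the geometric conclusion follows from Lemma~\ref{lema:critManif}: the attracting set $A$ consists of points with $|x_i|>2/\sqrt{3}$ for both $i=1,2$, whereas $|x_2^*|=2/\sqrt{3}$ puts $X^*=(x_1^*,x_2^*,\va(x_1^*),\va(x_2^*))$ exactly on a fold line in $\Sigma$. Combined with $|x_1^*|>2/\sqrt{3}$, this shows $X^*\in\overline{A}\cap\Sigma$, i.e.\ on the boundary of the attracting part of $C_0$. The argument is elementary throughout, and I expect no serious obstacle; the only point requiring care is the covering step, namely verifying that the two monotone outer branches of $\va$ jointly realise every value of $v_0$ \emph{with} the strict inequality $|x_1^*|>2/\sqrt{3}$, since that single observation secures both the well-definedness of the $H_1=0$ reduction and the boundary claim.
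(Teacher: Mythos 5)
Your proposal is correct and follows essentially the same route as the paper: at $x_2^*=2\sigma/\sqrt{3}$ the factor $\va'(x_2^*)=0$ kills $H_1$, so the equilibrium condition reduces to $H_2=0$, i.e.\ $k\va(x_1^*)=c+(b+k)\va(2\sigma/\sqrt{3})-2\sigma/\sqrt{3}$, which is solvable with $|x_1^*|>2/\sqrt{3}$ because $\va$ restricted to $\{|x|>2/\sqrt{3}\}$ is onto $\RR$. Your added care about the strict inequality guaranteeing $\va'(x_1^*)\ne 0$ (so that $H$ is defined at the equilibrium) is a detail the paper leaves implicit, but it does not change the argument.
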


\begin{proof}
   Taking into account \eqref{eq:slowx1}, at $x_2^*= 2\sigma/\sqrt{3}$ we have $\va'(x_2^*)=0$ so $x_1'=0$.
   Therefore $\left(x_1^*,2\sigma/\sqrt{3}\right)$, $\sigma=\pm 1$, is an equilibrium of \eqref{eq:slowx2} if and only if $x_2'=0$ i.e., when
   \begin{equation}\label{eq:x1*}
      k\va (x_1^*)=c+(b+k)\va\left(2\sigma/\sqrt{3}\right)-2\sigma/\sqrt{3}
   \end{equation}
   and the first part of the result follows immediately since for $|x|>2/\sqrt{3}$ the function $\va(x)$ takes all the values in $\RR$. The second part is immediate taking into account that $\left| x_1^*\right|>2/\sqrt{3}$.
\end{proof}

Note that an equilibrium $(x_1^*,x_2^*)$ of \eqref{eq:slowx2} may {\em not} correspond to an equilibrium $X^*= \left(x_1^*,x_2^*,\va(x_1^*), \va(x_2^*)\right)$ of the original equations \eqref{eq:2FHN}.
Lemma~\ref{lema:foldEq} shows that there is a non-empty open set of parameters $(k,b,c)$ of \eqref{eq:2FHN} such that there is an   equilibrium $ (x_1^*,x_2^*)$ of \eqref{eq:slowx2} for which the point $ \left(x_1^*,x_2^*,\va(x_1^*), \va(x_2^*)\right)$ lies at the boundary of the attracting set $A$.
  
The time rescaling that we have used allows us to gain enough hyperbolicity to obtain a complete analysis by standard methods from dynamical systems theory.
We call the point $ (x_1^*,x_2^*)$ obtained in Lemma~\ref{lema:foldEq} a {\em folded equilibrium} of \eqref{eq:slowx2}.
For the sake of completeness, we are going to review a way to detect canards of \eqref{eq:2FHN} from folded equilibria, critical points where the slow manifold  generically changes the stability. 
See also Guckenheimer \& Haiduc \cite{GuckenheimerHaduc} and Guckenheimer \cite{Guckenheimer}.

\begin{definition}[Definition 2.1 of \cite{SW01}, adapted]
   Solutions of the reduced problem \eqref{eq:slowx2} passing through a fold point from an attracting region to a repelling one are called singular canards.  
\end{definition}
 
In order to have a canard it is not enough to have a folded equilibrium---it is also necessary to have a trajectory that approaches it in positive time. 
This is the idea behind the next result.

\begin{lemma}[Lemma 2.3 of \cite{SW01}, adapted]
   \label{lemma_canards}
   A canard of \eqref{eq:2FHN} corresponds to, at least, one real negative eigenvalue of the matrix $DH$ at a folded equilibrium.
\end{lemma}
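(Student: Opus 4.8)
The plan is to reduce the statement to the planar folded-singularity theory of Szmolyan--Wechselberger \cite{SW01}. By Lemma~\ref{lema:foldEq} the point $(x_1^*,x_2^*)$ is a folded equilibrium: an equilibrium of the desingularised reduced flow $H$ of \eqref{eq:slowx2} that lies on the fold line $\Sigma$. Since $\left|x_1^*\right|>2/\sqrt3$ the remaining fast direction is hyperbolic, i.e. $\va'(x_1^*)\ne 0$, so in a neighbourhood of $X^*$ only the $x_2$ fast direction degenerates and \eqref{eq:2FHN} is, locally, a standard fast--slow system with a one-dimensional fold and a two-dimensional reduced flow --- exactly the setting of \cite{SW01}, with $X^*$ a folded singularity. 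The task is therefore to decide when a singular canard, i.e. (by the definition of singular canard above) a trajectory of the reduced flow crossing $\Sigma$ from the attracting region $A$ into the repelling one, passes through $X^*$.

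The key observation is that the rescaling $\tau=t/\va'(x_2)$ used to pass from \eqref{eq:slowx1} to \eqref{eq:slowx2} is singular and orientation-reversing across $\Sigma$, because $\va'(x_2)$ vanishes and changes sign at $x_2^*=2\sigma/\sqrt3$. Consequently a reduced-flow trajectory that crosses the fold transversally in finite time $t$ corresponds, in the desingularised flow, to a trajectory that limits onto the equilibrium $X^*$ in infinite rescaled time along an eigendirection of $DH(X^*)$ that is transverse to $\Sigma$, lying on the attracting sheet on one side of $X^*$ and on the repelling sheet on the other. Hence a singular canard through $X^*$ exists precisely when $DH(X^*)$ possesses such a transverse eigendirection whose (real) eigenvalue has the sign compatible with this time-asymptotics; following the orientation convention of \cite{SW01} this is a real negative eigenvalue.

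To make this effective I would compute $DH$ at $X^*$. Using $\va'(x_2^*)=0$ one obtains a $2\times2$ matrix of the form
\[
DH(X^*)=\begin{pmatrix} 0 & \dfrac{\va''(x_2^*)}{\va'(x_1^*)}\,R^* \\[2mm] k\,\va'(x_1^*) & 1 \end{pmatrix},
\qquad R^*=x_1^*-(b+k)\va(x_1^*)+k\va(x_2^*)-c ,
\]
where $R^*$ is the nonzero slow drift in the $x_1$ direction. Then $\tr\bigl(DH(X^*)\bigr)=1>0$ and $\det\bigl(DH(X^*)\bigr)=-k\,\va''(x_2^*)R^*$, so the eigenvalues are real of opposite sign (folded saddle, $\det<0$), real of equal positive sign (folded node, $0<\det<1/4$), or complex (folded focus, $\det>1/4$). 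Because $\tr\bigl(DH(X^*)\bigr)=1$, a negative eigenvalue occurs exactly in the folded-saddle case. In the focus case the desingularised trajectories spiral around $X^*$ and no monotone transverse crossing of $\Sigma$ can occur, so there is no singular canard; this is the genuine obstruction. In the real cases the fold direction $(1,0)$ is not an eigenvector, since $DH(X^*)\,(1,0)^{\top}=(0,k\va'(x_1^*))^{\top}$ with $k\va'(x_1^*)\ne0$, so both eigendirections are transverse to $\Sigma$ and the one of the appropriate sign supplies the connecting canard trajectory, yielding the criterion.

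The main difficulty is precisely the orientation bookkeeping of the second paragraph: one must track how the sign change of $\va'(x_2)$ across $\Sigma$ reverses time on one sheet, and thereby match the direction of the physical crossing (attracting $\to$ repelling) with the stable versus unstable eigendirection of $DH(X^*)$, so as to pin down the correct sign of the relevant eigenvalue and to exclude the folded focus. Once this correspondence is set up, the remaining work --- the characteristic polynomial of $DH(X^*)$ and the reading off of the eigenvalue signs --- is routine.
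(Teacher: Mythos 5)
Your proposal has two genuine gaps. The first and most serious: what you prove (at best) concerns \emph{singular} canards of the reduced problem \eqref{eq:slowx2}, i.e.\ the $\varepsilon=0$ limit, whereas the lemma is about canards of \eqref{eq:2FHN} in the sense of Definition~\ref{def:canard}: trajectories of the full system with $0<\varepsilon\ll 1$ that follow the repelling part of the slow manifold for a time of order $\mathcal{O}(1)$. The passage from a singular canard to an actual canard cannot be done by phase-plane analysis of $H$: normal hyperbolicity of $C_0$ fails on the fold lines, so Fenichel theory gives no extension of the attracting and repelling slow manifolds past $X^*$, and proving that these manifolds intersect along a canard for $\varepsilon>0$ is precisely the hard content of Szmolyan--Wechselberger's result. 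This is why the paper does not prove the lemma at all: it quotes it from \cite{SW01} and records that its proof ``uses blow-up of singularities and foliation by center manifolds as main techniques''. Your reduction to the planar one-fast/two-slow setting of \cite{SW01} (using $\va'(x_1^*)\ne 0$) and your computation of $DH(X^*)$ agree with the paper's equation \eqref{eq:DH}, but they only set the stage; the step from the linearisation of the desingularised flow to canards of the full system is delegated to the blow-up machinery, which your argument neither invokes nor replaces.

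Second, the orientation bookkeeping that you yourself flag as ``the main difficulty'' and then defer is exactly where your stated conclusion goes wrong. With the paper's rescaling $\tau=t/\va'(x_2)$, time is \emph{reversed} on the attracting sheet (there $\va'(x_2)<0$) and preserved on the repelling one; hence a singular canard corresponds to an orbit that leaves $X^*$ in rescaled time on the attracting side, i.e.\ to a \emph{positive} eigenvalue of the paper's $DH(X^*)$. Since $\tr DH(X^*)=1$, this is why the paper (proof of Proposition~\ref{cor:opensetcanard}) obtains canards when the folded equilibrium is a saddle \emph{or an unstable node}, and why Figures~\ref{fig:canard}, \ref{fig:MMO1}, \ref{fig:MMO2} and Theorem~\ref{teo:doubleFb0} exhibit canards at folded \emph{nodes}, which have no negative eigenvalue of $DH$. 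Your criterion ``canard $\iff$ folded saddle'' contradicts both. The word ``negative'' in the lemma is carried over from the convention of \cite{SW01}, whose desingularisation preserves orientation on the attracting sheet --- the opposite of the paper's --- so an argument consistent with the paper's $DH$ must either flip the sign or explicitly account for the reversal, as the paper does when it applies the lemma.
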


The proof of Lemma \ref{lemma_canards} uses blow-up of singularities and foliation by center manifolds as main techniques.
The blowing up makes the fine structure of the singular perturbation problem visible and then the full problem can be analysed by standard methods from dynamical systems theory.
We return now to our original problem.
 
\begin{lemma}\label{lema:dH}
   For $k\ne 0$, a folded equilibrium $(x_1^*,2\sigma/\sqrt{3})$, with $\sigma=\pm 1$, of the equation \eqref{eq:slowx2} is either a saddle or an unstable node or an unstable focus or a saddle-node. There exists a non-empty open set of the parameters $(k,b,c)\in \RR^3$ for which it is either a saddle or an unstable node.
\end{lemma}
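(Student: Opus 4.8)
The plan is to read off the type of the folded equilibrium directly from the trace and determinant of the Jacobian $DH$ of the reduced vector field \eqref{eq:slowx2} at the point $(x_1^*,x_2^*)$ with $x_2^*=2\sigma/\sqrt3$. First I would exploit the defining relation $\va'(x_2^*)=0$ to simplify the four partial derivatives. Writing $B(x_1,x_2)=x_1-(b+k)\va(x_1)+k\va(x_2)-c$, so that $H_1=\bigl(\va'(x_2)/\va'(x_1)\bigr)B$, the prefactor $\va'(x_2)$ forces $\partial H_1/\partial x_1=0$ at the fold, while in $\partial H_1/\partial x_2$ only the term coming from differentiating $\va'(x_2)$ survives, giving $\va''(x_2^*)\,\beta/\va'(x_1^*)$ with $\va''(x_2^*)=-4\sqrt3\,\sigma$ and $\beta:=B(x_1^*,x_2^*)$. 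Since $\partial H_2/\partial x_1=k\va'(x_1)$ and $\partial H_2/\partial x_2=1-(b+k)\va'(x_2)$, at the fold one obtains
\begin{equation*}
   DH(x_1^*,x_2^*)=\begin{pmatrix} 0 & -4\sqrt3\,\sigma\,\dfrac{\beta}{\va'(x_1^*)}\\[1ex] k\,\va'(x_1^*) & 1\end{pmatrix},
\end{equation*}
so that $\tr\bigl(DH\bigr)=1$ and $\det\bigl(DH\bigr)=4\sqrt3\,\sigma\,k\,\beta$.

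With the trace pinned at $+1>0$, the standard trace--determinant classification then yields exactly the four listed possibilities and excludes every asymptotically stable type. Writing $D=\det(DH)$, the discriminant is $1-4D$, so $D<0$ gives a saddle, $D=0$ gives eigenvalues $0$ and $1$ hence a saddle--node, $0<D<1/4$ gives two positive real eigenvalues hence an unstable node, and $D>1/4$ gives complex eigenvalues with real part $1/2>0$ hence an unstable focus. This proves the first assertion.

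For the second assertion I would show that the \emph{saddle} and \emph{unstable node} cases, i.e.\ $D=4\sqrt3\,\sigma k\beta<1/4$, occur on a non-empty open set. Using the equilibrium relation \eqref{eq:x1*} to eliminate $c$ from $\beta$ reduces it to the closed form $\beta=(x_1^*-x_2^*)-(b+2k)\bigl(\va(x_1^*)-\va(x_2^*)\bigr)$. By Lemma~\ref{lema:foldEq} and the implicit function theorem (valid because $\va'(x_1^*)\ne0$ on the branch $|x_1^*|>2/\sqrt3$), $x_1^*$, and hence $D$, depends smoothly on $(k,b,c)$, so $\{D<1/4\}$ is open. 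To see that it is non-empty I would let $x_1^*\to x_2^*=2\sigma/\sqrt3$ along the admissible branch, i.e.\ take $c$ near the value for which $\va(x_1^*)=\va(x_2^*)$: then both differences in $\beta$ vanish, so $\beta\to0$ and $D\to0<1/4$, whence for parameters in a neighbourhood the folded equilibrium is a saddle or an unstable node.

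The main obstacle I expect is the Jacobian computation, specifically getting $\det(DH)$ right: one must keep track of the $\va'(x_2)/\va'(x_1)$ prefactor in $H_1$ and recognise that at the fold the only surviving off-diagonal contribution is the one through $\va''(x_2^*)$, with $\beta$ generically nonzero (it is the value of the $\dot y_1$-component, which need not vanish at a fold of the $\dot y_2$-equation). Once this is in hand the classification is routine, and the open-set statement follows from continuity as soon as a single parameter value with $D<1/4$ is exhibited.
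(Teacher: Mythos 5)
Your proof follows essentially the same route as the paper's: the same Jacobian \eqref{eq:DH} (your factor $-4\sqrt3\,\sigma$ is exactly $\va''(x_2^*)$), the observation $\tr DH=1$ followed by the trace--determinant classification, and the substitution of the equilibrium relation \eqref{eq:x1*} into $\det DH$ (the paper keeps $c$ and writes the bracket as $x_1^*+x_2^*-b\left(\va(x_1^*)+\va(x_2^*)\right)-2c$; your form with $c$ eliminated is equivalent). The one step you must tighten is the final ``whence'': knowing $D\to 0<1/4$ does not by itself rule out the saddle--node locus $D=0$, so it does not yet exhibit a single parameter value at which the folded equilibrium is a saddle or an unstable node. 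Your own closed form fixes this in one line: since $\va'(x_2^*)=0$, we have $\va(x_1^*)-\va(x_2^*)=\mathcal{O}\left((x_1^*-x_2^*)^2\right)$ as $x_1^*\to x_2^*$, hence $\beta=(x_1^*-x_2^*)\left(1+\mathcal{O}(x_1^*-x_2^*)\right)\ne 0$ for $x_1^*$ close to but distinct from $x_2^*$, so $D=4\sqrt3\,\sigma k\beta\ne 0$ there; combined with your smooth-dependence (implicit function theorem) argument, the set $\left\{0\ne D<1/4\right\}$ is open and non-empty. With that line added your argument is complete, and in fact it is more explicit than the paper's own justification of the open-set claim, which asserts non-vanishing and the bound $1/4$ without the limiting construction.
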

 
\begin{proof} 
   The derivative $DH(x_1,x_2)$ at a folded equilibrium $(x_1^*,x_2^*)$, $x_2^*= 2\sigma/\sqrt{3}$, $\sigma=\pm 1$ is given by
   \begin{equation}\label{eq:DH}
      DH(x_1^*,x_2^*)=\begin{pmatrix}0&
      \dfrac{\va''(x_2^*)}{\va'(x_1^*)}\left(x_1^*-(b+k)\va(x_1^*)+k\va(x_2^*)-c\right)\\
      k\va'(x_1^*)&1
      \end{pmatrix} \ .
   \end{equation}
   Therefore $(x_1^*,x_2^*)$ is either a saddle or an unstable node or an unstable focus or a saddle-node for \eqref{eq:slowx2} since at least one of the eigenvalues is either positive or has positive real part because $\tr DH(x_1^*,x_2^*)=1$.

   The point $(x_1^*,x_2^*)$ is either a saddle or  an unstable node if and only if the eigenvalues of $ DH(x_1^*,x_2^*)$ are real and not zero, i.e., when $0\ne \det DH(x_1^*,x_2^*)\le 1/4$.
   From \eqref{eq:DH}, we have:
   $$
   \det DH(x_1^*,x_2^*)=-k\va''(x_2^*)\left(x_1^*-(b+k)\va(x_1^*)+k\va(x_2^*)-c\right).
   $$
   Replacing $\va(x_1^*)$ by the value in \eqref{eq:x1*} the previous equality implies
   \begin{equation}\label{eq:detDH}
      \det DH(x_1^*,x_2^*)=-k\va''(x_2^*)\left[x_1^*+x_2^*-b\left(\va(x_1^*)+\va(x_2^*)\right)-2c\right]
   \end{equation}
   which is not zero and less than 1/4 for an open set of parameters $(k,b,c)$, since $\va''(x_2^*)=-6x_2^*\neq 0$ and $k\neq 0$.
\end{proof}
 
\begin{figure}
   \parbox{0.3\linewidth}{\begin{center}
		\includegraphics[width=\linewidth]{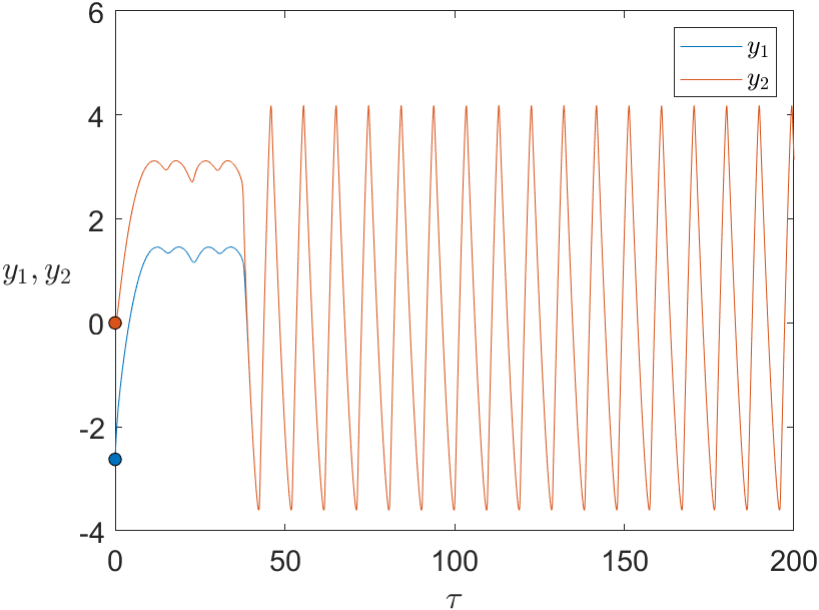}
		\\ (A)
	\end{center}}
	   \parbox{0.3\linewidth}{\begin{center}
		\includegraphics[width=\linewidth]{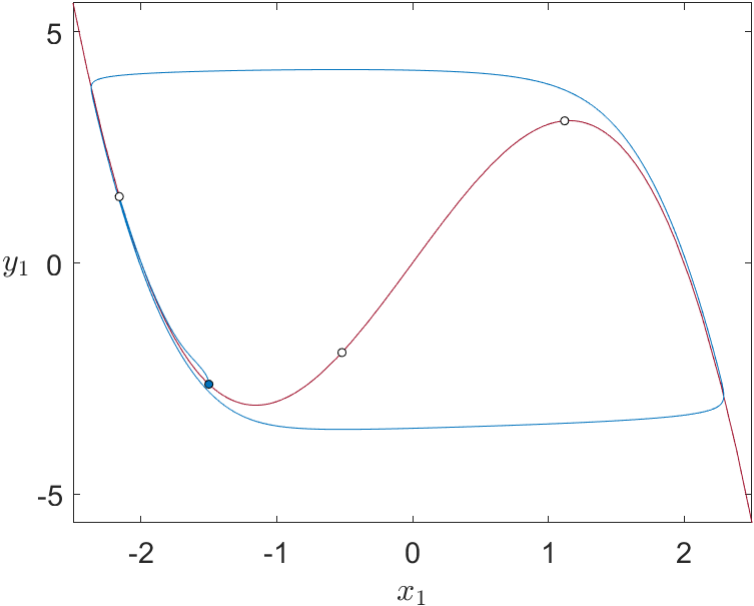}
		\\ (B)
	\end{center}}
   \parbox{0.3\linewidth}{\begin{center}
    	\includegraphics[width=\linewidth]{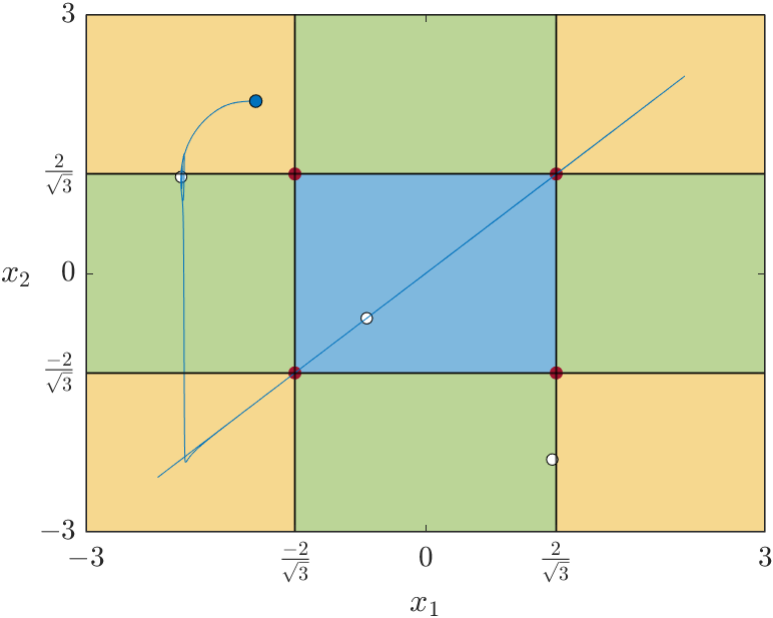}
    	\\ (C)
   \end{center}}
   \\
   \parbox{0.3\linewidth}{\begin{center}
		\includegraphics[width=\linewidth]{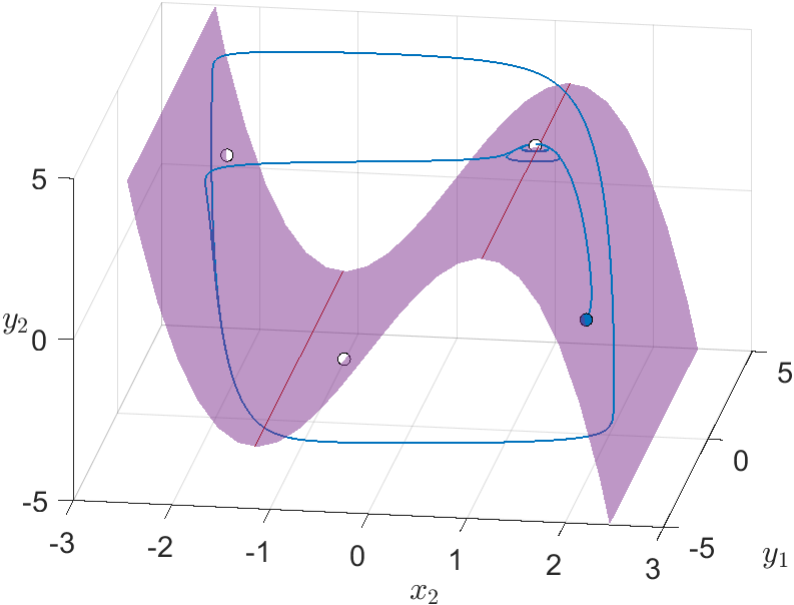}
		\\ (D)
	\end{center}}
   \parbox{0.3\linewidth}{\begin{center}
		\includegraphics[width=\linewidth]{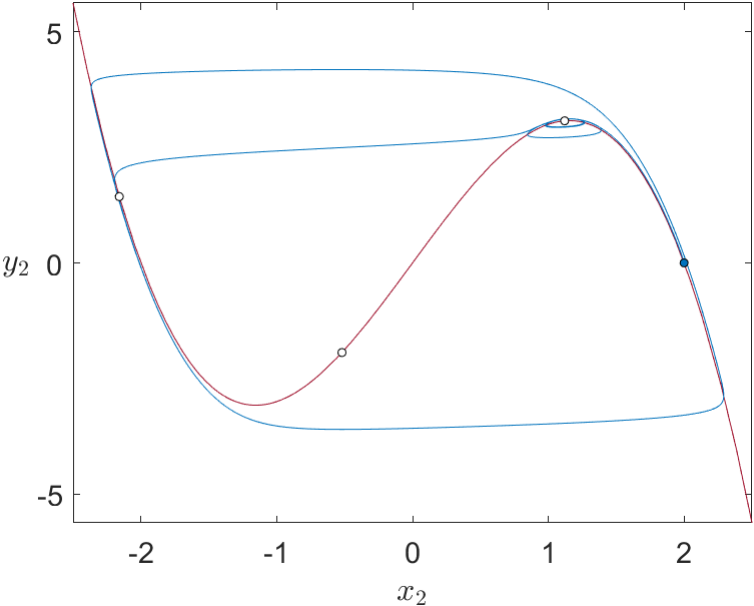}
		\\ (E)
	\end{center}}
   \parbox{0.3\linewidth}{\begin{center}
		\includegraphics[width=\linewidth]{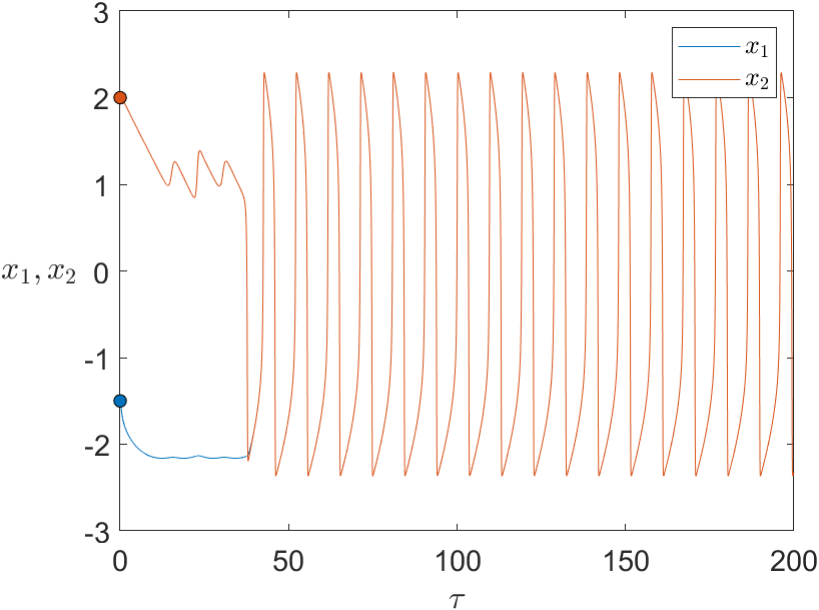}
		\\ (F)
	\end{center}}
	\caption{\small Illustration of Proposition~\ref{cor:opensetcanard}. 
            Canard transient on a solution of \eqref{eq:2FHN} near a folded node, parameters $b=0$, $c=-0.519935054$, $k=1$ and $\varepsilon=0.5$, initial condition $\left(x_{1},x_{2},y_{1},y_{2}\right)=\left(-1.5,2,{ \va(-1.5),\va(2)}\right)$. 
            (A) - Time course for $y_1(t)$ (blue) and $y_2(t)$ (orange). 
            (B) - Projection of the trajectory (blue) on the $\left(x_1,y_1\right)$ plane, initial condition on the blue dot, red critical manifold $C_0$. 
            (C) - Projection of the trajectory (red) on the $\left(x_1,x_2\right)$ plane, initial condition on the red dot, equilibria on the  white dots. 
            (D) - Projection of the trajectory (purple) on the $\left(x_2,y_1,y_2\right)$ space, initial condition on the blue dot, purple critical manifold $C_0$. 
            (E) - Projection of the trajectory (blue) on the $\left(x_2,y_2\right)$ plane, conventions as in (B) -- transient small oscillations near folded node. 
            (F) - Time course for $x_1(t)$ (blue) and $x_2(t)$ (orange).
         }
	\label{fig:canard}
\end{figure}
 
The next result is the goal of this subsection:
\begin{proposition}\label{cor:opensetcanard}
   If $\left| x_1^*\right|>\dfrac{2}{\sqrt{3}}$ and  $x_2^*= 2\sigma/\sqrt{3}$, $\sigma=\pm 1$, there exists a non-empty open set of the parameters $(k,b,c)\in \RR^3$ for which  there is at least one trajectory of  \eqref{eq:2FHN} that  goes across $X^*=\left(x_1^*,x_2^*,\va(x_1^*), \va(x_2^*)\right)$  generating a canard.
\end{proposition}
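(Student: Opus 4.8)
The plan is to assemble the three preceding lemmas into a single existence\,+\,openness statement, and then pin down an explicit open parameter regime using the determinant formula \eqref{eq:detDH}. First I would invoke Lemma~\ref{lema:foldEq}: for every $k\neq 0$ and every $b,c$ it produces a folded equilibrium $(x_1^*,x_2^*)$ of the desingularized reduced flow \eqref{eq:slowx2}, with $x_2^*=2\sigma/\sqrt3$ and $|x_1^*|>2/\sqrt3$, so that $X^*=(x_1^*,x_2^*,\va(x_1^*),\va(x_2^*))$ sits on the boundary of the attracting region $A$ of the critical manifold. This is the geometric prerequisite for a canard: at $X^*$ the fold point is simultaneously an equilibrium of the slow flow, so near $X^*$ the fast direction is slow enough for a trajectory to be dragged across the fold rather than jump off it.

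The second step is to exhibit an open set of parameters on which $DH$ at $X^*$ has a real negative eigenvalue, which by Lemma~\ref{lemma_canards} is what yields the canard. By Lemma~\ref{lema:dH} the only folded equilibria with real eigenvalues are the saddle and the unstable node; since $\tr DH=1>0$, an unstable node has two positive eigenvalues, so I would single out the saddle case, where $\det DH<0$ forces one eigenvalue of each sign and hence supplies the required negative eigenvalue. The sign of the determinant is controlled explicitly by \eqref{eq:detDH}: writing $\det DH=-k\,\va''(x_2^*)\,\Phi$ with $\Phi=x_1^*+x_2^*-b(\va(x_1^*)+\va(x_2^*))-2c$ and recalling $\va''(x_2^*)=-6x_2^*=-4\sqrt3\,\sigma\neq 0$, the saddle condition $\det DH<0$ reduces to a strict sign condition on $k\sigma\Phi$. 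Because $x_1^*$ depends continuously on $(k,b,c)$ through \eqref{eq:x1*} (on the branch $|x_1^*|>2/\sqrt3$), the quantity $\Phi$ is continuous in the parameters, so the set where $\det DH<0$ and $k\neq 0$ is open, and it is non-empty since $\Phi$ can be driven to either sign by varying $c$.

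Finally I would combine the pieces. On this open set $X^*$ is a folded saddle lying on $\partial A$, its linearization $DH(x_1^*,x_2^*)$ in \eqref{eq:DH} has a real negative eigenvalue, and Lemma~\ref{lemma_canards} then produces at least one trajectory of \eqref{eq:2FHN} crossing $X^*$ as a canard. Openness is inherited directly from the determinant condition, and non-emptiness from the explicit choice of $c$, which gives the stated non-empty open set of $(k,b,c)\in\RR^3$.

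The step I expect to be most delicate is not the algebra but the passage from the reduced, desingularized picture to the full system: a negative eigenvalue of $DH$ only produces a \emph{singular} canard of \eqref{eq:slowx2}, and one must know that this singular canard perturbs to a true canard of \eqref{eq:2FHN} for $0<\varepsilon\mm 1$. This is precisely the content of Lemma~\ref{lemma_canards} (the adaptation of Lemma~2.3 of \cite{SW01}), whose proof rests on the blow-up of the folded singularity, so I would quote it rather than reprove it. A secondary subtlety is bookkeeping the sign conventions: the desingularization $d\tau=dt/\va'(x_2)$ reverses time on the attracting sheet where $\va'(x_2)<0$, so one must confirm that the eigendirection attached to the negative eigenvalue is the one actually traversed from $A$ into the repelling region (rather than the \emph{faux} canard). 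Verifying that the adapted statement of Lemma~\ref{lemma_canards} already incorporates this orientation, so that the negative eigenvalue indeed selects the genuine canard, is the one point I would check carefully before declaring the proof complete.
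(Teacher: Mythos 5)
Your proposal is correct and rests on the same pillars as the paper's proof --- the folded equilibrium of Lemma~\ref{lema:foldEq}, the determinant formula \eqref{eq:detDH}, and the canard criterion of Lemma~\ref{lemma_canards} --- but the case analysis differs in a way worth spelling out. The paper's proof consists of exactly two observations: the rescaling $\tau=t/\va'(x_2)$ reverses time orientation on the attracting sheet (where $\va'(x_2)<0$), and therefore \emph{both} alternatives produced by Lemma~\ref{lema:dH}, the folded saddle and the unstable node, yield canards via Lemma~\ref{lemma_canards}; the non-empty open parameter set is then simply the one supplied by Lemma~\ref{lema:dH}. You instead discard the unstable node on the grounds that it has no negative eigenvalue of $DH$. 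That is the literal reading of Lemma~\ref{lemma_canards} in the paper's $\tau$-orientation, but the point of the paper's time-reversal remark is that the orientation-consistent linearisation on the attracting side is $-DH$: the paper's ``unstable node'' is the classical folded node (two negative eigenvalues after the sign flip), which is the canard case par excellence --- indeed it is the case realised numerically in Figure~\ref{fig:canard} and exploited in Theorem~\ref{teo:doubleFb0}. Your exclusion of it is therefore based on the wrong orientation, though it is harmless here: the proposition only asks for \emph{some} non-empty open set, and the folded-saddle set you construct works. The saddle case is also robust to the convention ambiguity --- a saddle of $DH$ is a saddle of $-DH$, so it satisfies the eigenvalue criterion either way, and a folded saddle always carries exactly one genuine singular canard (alongside one faux canard), so the orientation worry you flag at the end resolves positively whichever eigendirection carries the true canard. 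Finally, your openness and non-emptiness argument (continuity of $x_1^*$ in $(k,b,c)$ on a fixed branch, and driving the sign of $k\sigma\Phi$ by varying $c$, where $\Phi$ denotes the bracket in \eqref{eq:detDH}) is sound and is corroborated by the saddle rows of Table~\ref{table:folded}.
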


\begin{proof}
   First of all note that the time rescaling $\tau=t/\va'(x_2)$ we have used reverses time orientation when $\left(x_1^*,x_2^*,\va(x_1^*), \va(x_2^*)\right)$ is in the attracting part of $C_0$ (note that here we have $\va'(x_2^*)<0$).  
   Therefore when $(x_1^*,x_2^*)$ is either a saddle or an unstable node for \eqref{eq:slowx2} there is at least one trajectory of \eqref{eq:2FHN} that goes across $\left(x_1^*,x_2^*,\va(x_1^*), \va(x_2^*)\right)$ generating a canard.
   This is a consequence of Lemma \ref{lemma_canards}, where it is stated that canards correspond to, at least, one negative eigenvalue of $DH$ associated to \eqref{eq:slowx2} at a fold point.   
\end{proof}

Examples are shown in Figures~\ref{fig:canard}, \ref{fig:MMO1} and \ref{fig:MMO2}. In the first of these examples, before the trajectory approaches a synchronous periodic orbit it makes some transient small oscillations around the fold point (``canard behaviour-type'') that are visible in Figure~\ref{fig:canard}~(E). In this example there is a folded node, see Table~\ref{table:folded} below. 
 
Note that from equation \eqref{eq:detDH} it follows that for any given $k\ne 0$ and $b\in \RR$ there is a value of the parameter $c\in \RR$ for which $\det DH(x_1^*,x_2^*)=0$.
This corresponds to saddle-node bifurcations.
According to \cite{Krupa2014} this gives rise to \emph{mixed-mode oscillations} like those shown in Figures~\ref{fig:MMO1} and \ref{fig:MMO2}, i.e., trajectories that combine small oscillations and large oscillations of relaxation type, both recurring in an alternating manner.

\subsection{The case $b=0$}\label{subsec:b0}
As an illustration of Lemmas~\ref{lema:foldEq} and  \ref{lema:dH} we describe their contents in a particular case.
More information on the equilibria of the equation \eqref{eq:slowx2} may be obtained if we make the simplifying assumption $b=0$.
Indeed the expression of \eqref{eq:FHN} in the particular case $b=0$ has been used in the analysis of two FHN coupled in the fast equations by Pedersen {\sl et al.}  \cite{PedersenEtal2022}, in experimental results with an electrical circuit by Kulminskiy {\sl et al.}  \cite{Kulminskiy} and with double coupling fast--to--fast and slow--to--slow by Krupa {\sl et al.} 2014  \cite{Krupa2014}.
Thus this particular case is interesting for comparing the outcomes of different types of couplings.
We obtain sufficient conditions on the parameters $c$ and $k\neq 0$ for the existence of a folded saddle.
Since by Lemma~\ref{lema:dH} this holds in an open set of parameters, then the folded saddles will persist for small $b\ne 0$.

From expression \eqref{eq:x1*} the folded equilibrium $(x_1^*,x_2^*)$ of \eqref{eq:slowx2} with $x_2^*=2\sigma/\sqrt{3}$, $\sigma=\pm1$, satisfies the equality
\begin{equation}\label{eq:dva}
   k\left(\va(x_1^*)-\va(x_2^*)\right)=c-x_2^*.
\end{equation}
Substituting into \eqref{eq:detDH} we get
\begin{equation}\label{eq:detDHb0}
    \det DH(x_1^*,x_2^*)= -k\va''(x_2^*)\left(x_1^*-k\va(x_1^*)+k\va(x_2^*)-c\right)=
    -k\va''(x_2^*)\left[x_1^*+x_2^*-2c\right].   
\end{equation}

The folded equilibria $(x_1^*,2\sigma/\sqrt{3})$ may then be classified, in some cases under additional conditions on the sign of $\va(x_2^*)-\va(x_1^*)$, as shown in Table~\ref{table:folded}.
To do this we divide the region $A$ in four components $A=A_1\cup A_2\cup A_3 \cup A_4$, as depicted in Figure \ref{fig:VarCrit}, where:
\begin{eqnarray*}
   A_1&=&\left\{(x_1,x_2,y_1,y_2) \in\RR^4:\  x_1>2/\sqrt{3}, \quad x_2>2/\sqrt{3}, \quad y_i=\va(x_i),\quad i=1,2\right\}\\
   A_2&=&\left\{(x_1,x_2,y_1,y_2) \in\RR^4:\  x_1<-2/\sqrt{3}, \quad x_2>2/\sqrt{3}, \quad y_i=\va(x_i),\quad i=1,2\right\}\\
   A_3&=&\left\{(x_1,x_2,y_1,y_2) \in\RR^4:\  x_1<-2/\sqrt{3}, \quad x_2<-2/\sqrt{3}, \quad y_i=\va(x_i),\quad i=1,2\right\}\\
   A_4&=&\left\{(x_1,x_2,y_1,y_2) \in\RR^4:\  x_1>2/\sqrt{3}, \quad x_2<-2/\sqrt{3}, \quad y_i=\va(x_i),\quad i=1,2\right\}.
\end{eqnarray*}
We will use the notation $A_i^*$ for the sets $A_i^*=\left\{(x_1,x_2):\ \left(x_1,x_2,\va(x_1),\va(x_2)\right)\in A_i\right\}$, $i=1,\ldots,4$.

In Table~\ref{table:folded}, we present sufficient conditions on $c$ for the existence and classification of a folded equilibrium of \eqref{eq:slowx2} with $b=0$ on the components with $x_2^*= \pm 2/\sqrt{3}$ of $\partial A_i^*$, $i=1,\ldots,4$, the boundary of the attracting region of the critical manifold. 
The analysis for folded equilibria with $x_1^*= \pm 2/\sqrt{3}$ follows by symmetry.
We present the computations for the cases where the folded equilibrium $\left(x_1^*,x_2^*\right)$ lies on the boundaries of $A_1^*$ or $A_2^*$.
The computations for the other cases in Table~\ref{table:folded} run along the same lines. For $(x_1^*,x_2^*)\in \partial A_1^*$ or $\partial A_3^*$ the classification is simpler, since $\va(x_1^*)-\va(x_2^*)$ has constant sign.

\subsection*{Case $\partial A_1^*$}  
If the folded equilibrium $\left(x_1^*,x_2^*\right)=\left(x_1^*,2/\sqrt{3}\right)$ lies on the boundary, $\partial A_1^*$, of region $A_1^*$ then the following conditions hold:
$$
\varphi''(x_2^*)=-6x_2^*<0
\qquad
x_1^*-x_2^*>0
\qquad
x_1^*+x_2^*>4/\sqrt{3}>0
\qquad
\varphi(x_1^*)-\varphi(x_2^*)<0.
$$
We start by obtaining sufficient conditions on the parameter $c$ for the existence of a folded equilibrium, and then proceed to classify it:
\begin{enumerate}
   \item  If $c<2/\sqrt{3}=x_2^*$ then \eqref{eq:dva} implies that $k>0$.
   Therefore $\det H(x_1^*, x_2^*)=-k\va''(x_2^*)\left[x_1^*+x_2^*-2c\right] >0$, and hence $\left(x_1^*,x_2^*\right)$ is either a node or an unstable focus of \eqref{eq:slowx2}.
 
   \item If $c>2/\sqrt{3}=x_2^*$, then \eqref{eq:dva} implies that $k<0$.
   In this case there are the following possibilities:
   \begin{enumerate}
      \item If $c>\left(x_1^*+2/\sqrt{3}\right)/2$ then $\det H(x_1^*, x_2^*)>0$ and  $\left(x_1^*,x_2^*\right)$ is either a node or an unstable focus of \eqref{eq:slowx2}. 
      \item If $c<\left(x_1^*+2/\sqrt{3}\right)/2$ then $\det H(x_1^*, x_2^*)>0$ and  $\left(x_1^*,x_2^*\right)$ is a saddle.
   \end{enumerate}
\end{enumerate}

\subsection*{Case $\partial A_2^*$} 
Similarly to what we did before, if the folded equilibrium $\left(x_1^*,x_2^*\right)=\left(x_1^*,2/\sqrt{3}\right)$ lies on the boundary, $\partial A_2^*$, of region $A_2^*$ then we have
$$
\varphi''(x_2^*)=-6x_2^*<0
\qquad
x_1^*<-2/\sqrt{3}=-x_2^*.
$$
We divide the calculation in two main cases:
\begin{enumerate}
   \item If $\va(x_1^*)-\va(x_2^*)>0$, then there are three sufficient conditions on the parameter $c$ for the existence of a folded equilibrium of \eqref{eq:slowx2}. 
   \begin{enumerate}
      \item If $c>2/\sqrt{3}$ then $c-x_2^*>0$ and $x_1^*+x_2^*-2c<0$. 
      Therefore $k>0$ (by \ref{eq:dva}) and hence $\det DH(x_1^*,x_2^*)<0$. Then $(x_1^*,x_2^*)$ is a saddle of \eqref{eq:slowx2}.
      \item If $0<c<2/\sqrt{3}$ then $c-x_2^*<0$ and $x_1^*+x_2^*-2c<0$.
      Therefore $k<0$, $\det DH(x_1^*,x_2^*)>0$  and $(x_1^*,x_2^*)$ is either a node or an unstable focus of \eqref{eq:slowx2}.
      \item If $x_1^*-c>0$ then  $c<0$ and $x_1^*+x_2^*-2c>0$.
      From \eqref{eq:dva} it follows that $k<0$ implying $\det DH(x_1^*,x_2^*)>0$ and $(x_1^*,x_2^*)$ is either a node or an unstable focus of \eqref{eq:slowx2}.
   \end{enumerate}
   \item If $\va(x_1^*)-\va(x_2^*)<0$ then the conditions above on $c$ are applicable, but $\det DH(x_1^*,x_2^*)$ has the opposite sign as discussed above.
\end{enumerate}

\begin{table}[h]
   \begin{center}
      \begin{tabular}{ccclccc}
         region	&	$x_1^*$	&	$x_2^*$	&	$c$	&	other 	&	$k$	&	equilibrium	\\
            &		&		&		&	condition	&		&	type	\\ \hline
         $\partial A_1^*$	&	$x_1^*>2/\sqrt{3}$	&	$+2/\sqrt{3}$	&	$c<2/\sqrt{3}$	&	-	&	$k>0$	&	node or focus	\\
            &		&		&	$2/\sqrt{3}<c<\left(x_1^*+x_2^*\right)/2$	&	-	&	$k<0$	&	saddle	\\
            &		&		&	$c>\left(x_1^*+x_2^*\right)/2$	&		&	$k<0$	&	node or focus	\\ \hline
         $\partial A_2^*$	&	$x_1^*<-2/\sqrt{3}$	&	$+2/\sqrt{3}$	&	$c>2/\sqrt{3}$	&	$\va(x_1^*)>\va(x_2^*)$	&	$k>0$	&	saddle	\\
            &		&		&	$0<c<2/\sqrt{3}$	&	$\va(x_1^*)>\va(x_2^*)$	&	$k<0$	&	node or focus	\\
            &		&		&	$c<x_1^*$	&	$\va(x_1^*)>\va(x_2^*)$	&	$k<0$	&	node or focus	\\
            &		&		&	$c>2/\sqrt{3}$	&	$\va(x_1^*)<\va(x_2^*)$	&	$k<0$	&	node or focus	\\
            &		&		&	$0<c<2/\sqrt{3}$	&	$\va(x_1^*)<\va(x_2^*)$	&	$k>0$	&	saddle	\\
            &		&		&	$c<x_1^*$	&	$\va(x_1^*)<\va(x_2^*)$	&	$k>0$	&	saddle	\\ \hline
         $\partial A_3^*$	&	$x_1^*<-2/\sqrt{3}$	&	$-2/\sqrt{3}$	&	$c>-2/\sqrt{3}$	&	-	&	$k>0$	&	node or focus	\\
            &		&		&	$\left(x_1^*+x_2^*\right)/2<c<-2/\sqrt{3}$	&	-	&	$k<0$	&	saddle	\\
            &		&		&	$c<\left(x_1^*+x_2^*\right)/2$	&		&	$k<0$	&	node or focus	\\ \hline
         $\partial A_4^*$	&	$x_1^*>2/\sqrt{3}$	&	$-2/\sqrt{3}$	&	$c<-2/\sqrt{3}$	&	$\va(x_1^*)>\va(x_2^*)$	&	$k<0$	&	node or focus	\\
            &		&		&	$-2/\sqrt{3}<c<0$	&	$\va(x_1^*)>\va(x_2^*)$	&	$k>0$	&	saddle	\\
            &		&		&	$c>x_1^*$	&	$\va(x_1^*)>\va(x_2^*)$	&	$k>0$	&	node or focus	\\
            &		&		&	$c<-2/\sqrt{3}$	&	$\va(x_1^*)<\va(x_2^*)$	&	$k>0$	&	saddle	\\
            &		&		&	$-2/\sqrt{3}<c<0$	&	$\va(x_1^*)<\va(x_2^*)$	&	$k<0$	&	node or focus	\\
            &		&		&	$c>x_1^*$	&	$\va(x_1^*)<\va(x_2^*)$	&	$k<0$	&	saddle	\\ 
      \end{tabular}
   \end{center}
   \caption{ Sufficient conditions on $c$ for the existence of a folded equilibrium of \eqref{eq:slowx2} with $b=0$ on  the components  $\partial A_i^*$, $i=1,\ldots,4$ of the boundary  of the attracting region of the critical manifold and classification of the folded equilibrium. 
   The regions are those of Figure~\ref{fig:VarCrit}.}
   \label{table:folded}
\end{table}

In the two examples shown in Figures~\ref{fig:MMO1} and \ref{fig:MMO2}, the trajectory goes near a folded node in the boundary of region $A_3$.
The canard persists as a high frequency oscillation of small amplitude that alternates with the large amplitude relaxation oscillation of lower frequency in a {\em mixed-mode oscillation}.
The small oscillations remain close to the synchrony plane while the large ones make alternate visits to the regions $A_2$ and $A_4$.
The small oscillations take place close to a double fold point, that we proceed to discuss.

\begin{figure}
   \parbox{0.3\linewidth}{\begin{center}
      \includegraphics[width=\linewidth]{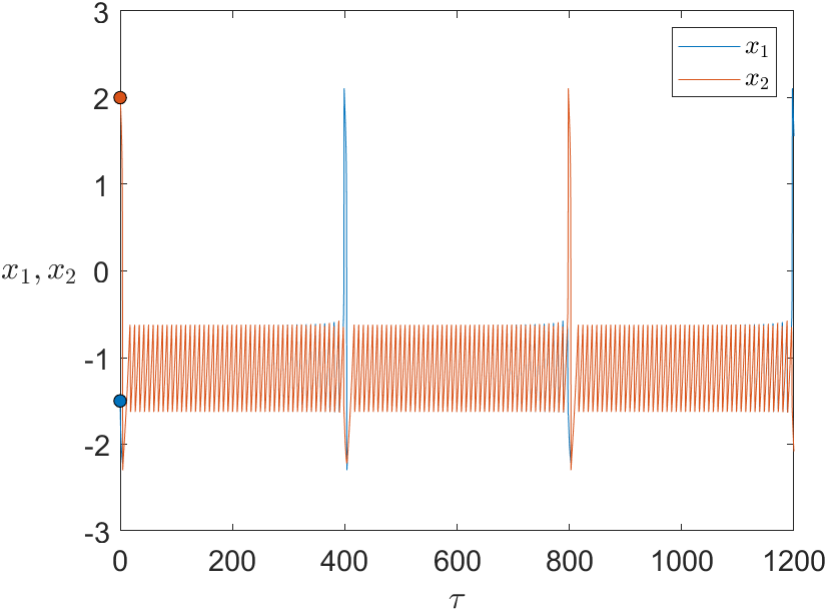}
		\\ (A)
   \end{center}}\quad
   \parbox{0.3\linewidth}{\begin{center}
		\includegraphics[width=\linewidth]{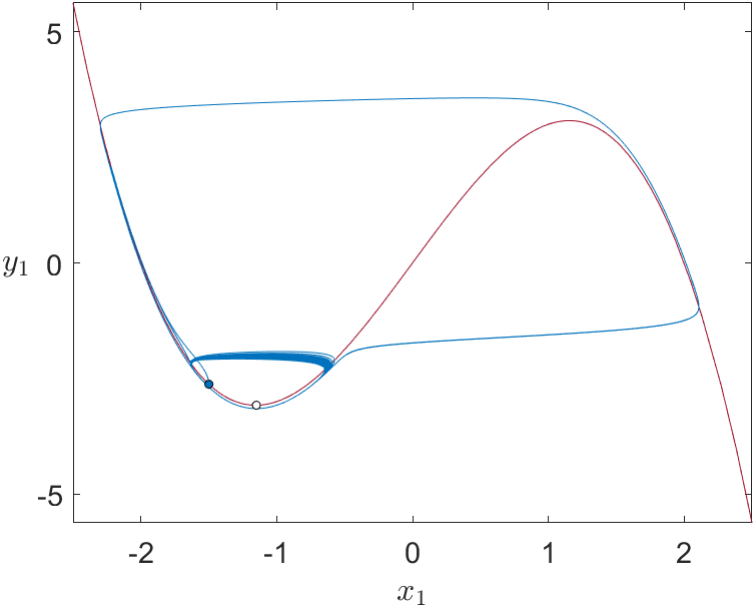}
		\\ (B)
   \end{center}}\quad
   \parbox{0.3\linewidth}{\begin{center}
      \includegraphics[width=\linewidth]{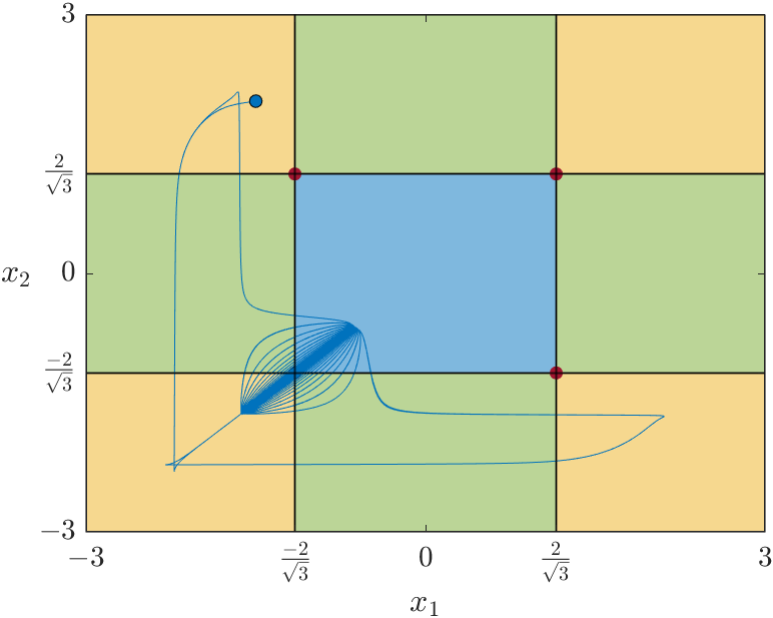}
      \\ (C)
   \end{center}}
	\\
   \parbox{0.3\linewidth}{\begin{center}
      \includegraphics[width=\linewidth]{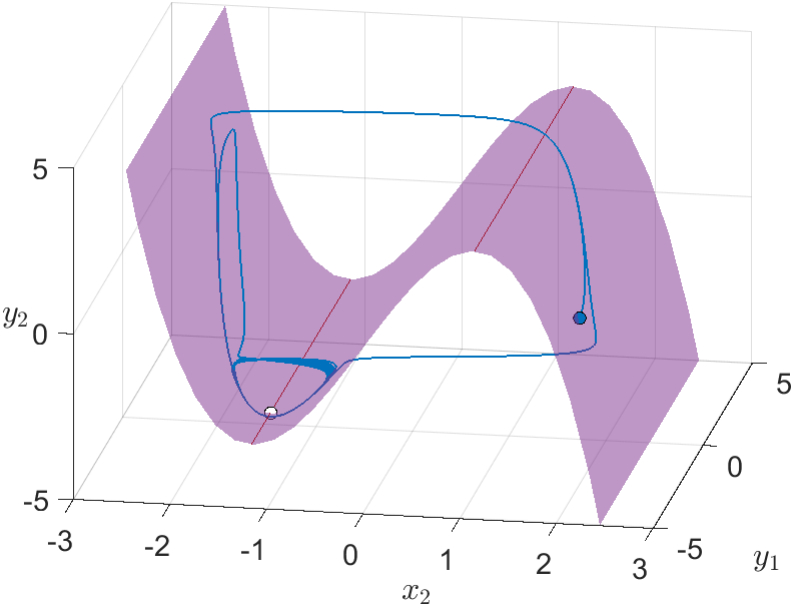}
		\\ (D)
   \end{center}}\quad
   \parbox{0.3\linewidth}{\begin{center}
		\includegraphics[width=\linewidth]{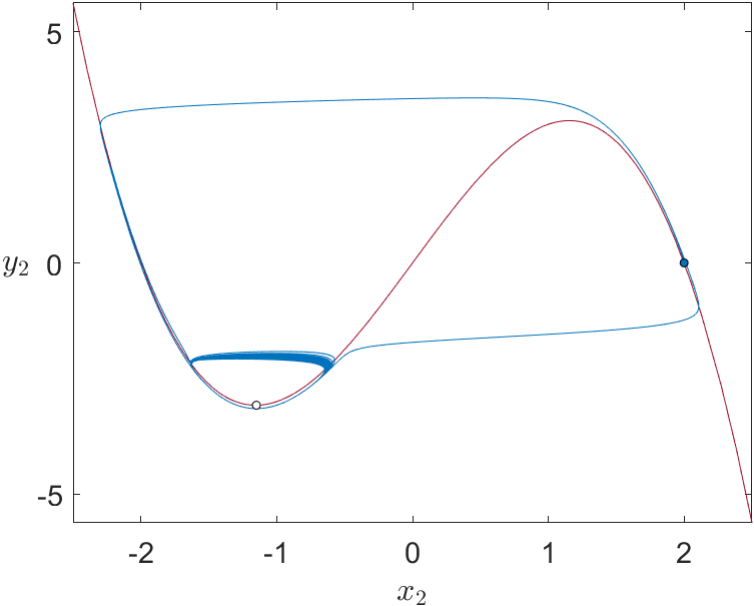}
		\\ (E)
   \end{center}}\quad
   \parbox{0.3\linewidth}{\begin{center}
		\includegraphics[width=\linewidth]{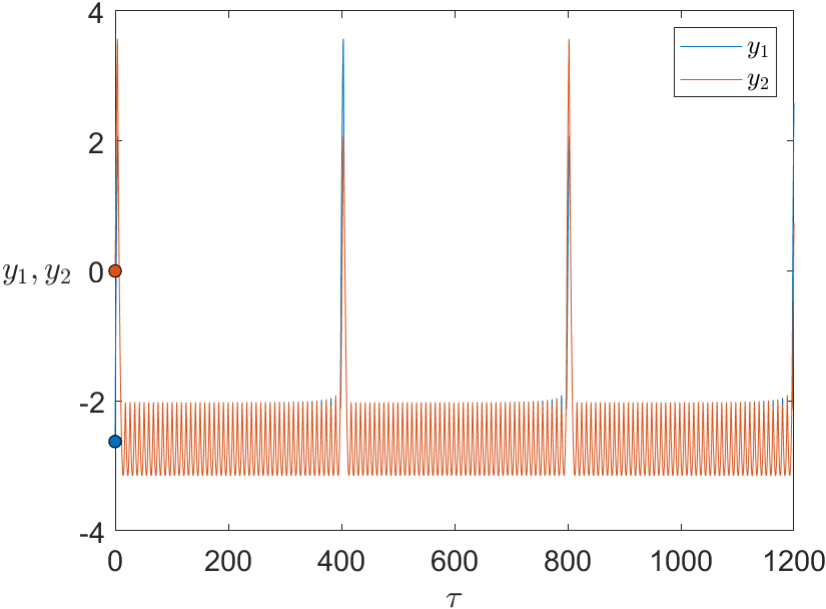}
		\\ (F)
   \end{center}}
	\caption{\small Illustration of Theorem~\ref{teo:doubleFb0}. 
            Mixed-mode oscillations arising from a canard on a solution of \eqref{eq:2FHN} near a folded node, parameters $b=0$, $c=-1.150079575$, $k=1$ and $\varepsilon=0.5$, initial condition $\left(x_{1},x_{2},y_{1},y_{2}\right)=\left(-1.5,2,\va(-1.5),\va(2)\right)$. These numerics illustrate the combination of large and small oscillations in a persistent and regular way.
            (A) - Time course for  $x_1(t)$ (blue) and $x_2(t)$ (orange). 
            (B) - Projection of the trajectory (blue) on the $\left(x_1,y_1\right)$ plane, initial condition on the blue dot,  red critical manifold $C_0$.  
            (C) - Projection of the trajectory (red) on the $\left(x_1,x_2\right)$ plane, initial condition on the blue dot.
            (D) - Projection of the trajectory (blue) on the $\left(x_2,y_1,y_2\right)$ space, initial condition on the blue dot, purple critical manifold $C_0$. 
            (E) - Projection of the trajectory (blue) on the $\left(x_2,y_2\right)$ plane, conventions as in (B), white dot folded equilibrium. 
            (F) - Time course for  $y_1(t)$ (blue) and $y_2(t)$ (orange).
         }
	\label{fig:MMO1}
\end{figure}

\begin{figure}
   \parbox{0.3\linewidth}{\begin{center}
      \includegraphics[width=\linewidth]{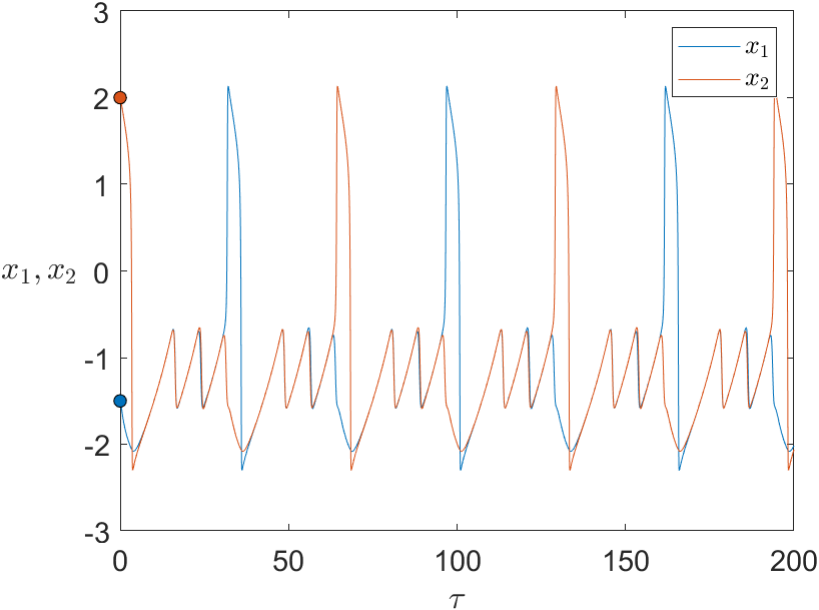}
		\\ (A)
   \end{center}}\quad
   \parbox{0.3\linewidth}{\begin{center}
		\includegraphics[width=\linewidth]{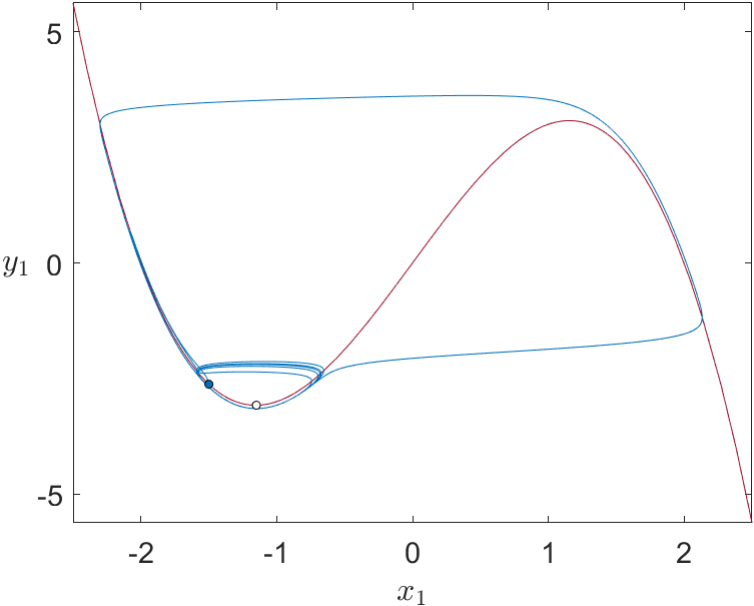}
		\\ (B)
   \end{center}}\quad
   \parbox{0.3\linewidth}{\begin{center}
      \includegraphics[width=\linewidth]{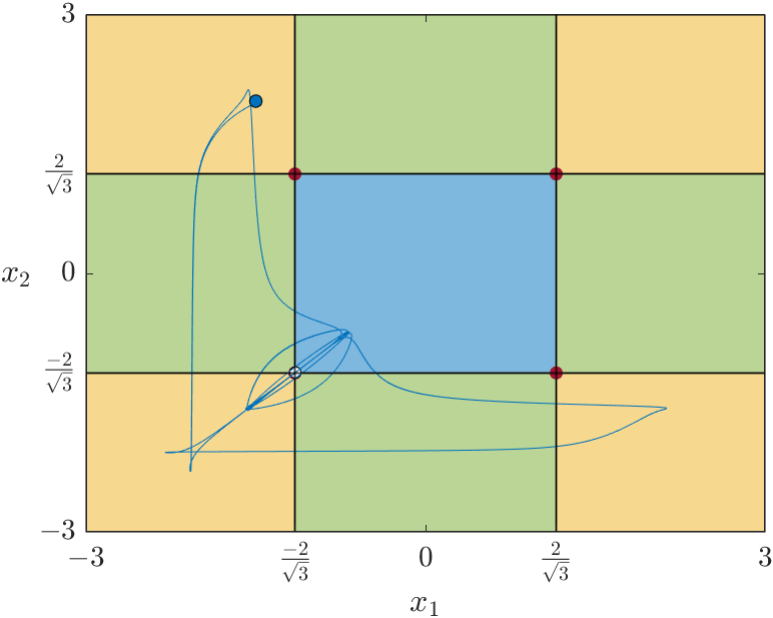}
      \\ (C)
   \end{center}}
	\\
   \parbox{0.3\linewidth}{\begin{center}
		\includegraphics[width=\linewidth]{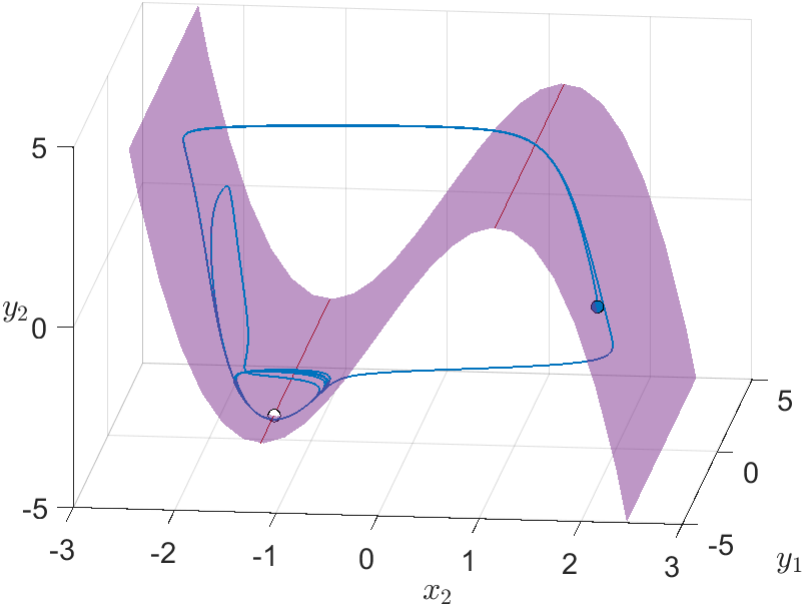}
		\\ (D)
   \end{center}}\quad
   \parbox{0.3\linewidth}{\begin{center}
		\includegraphics[width=\linewidth]{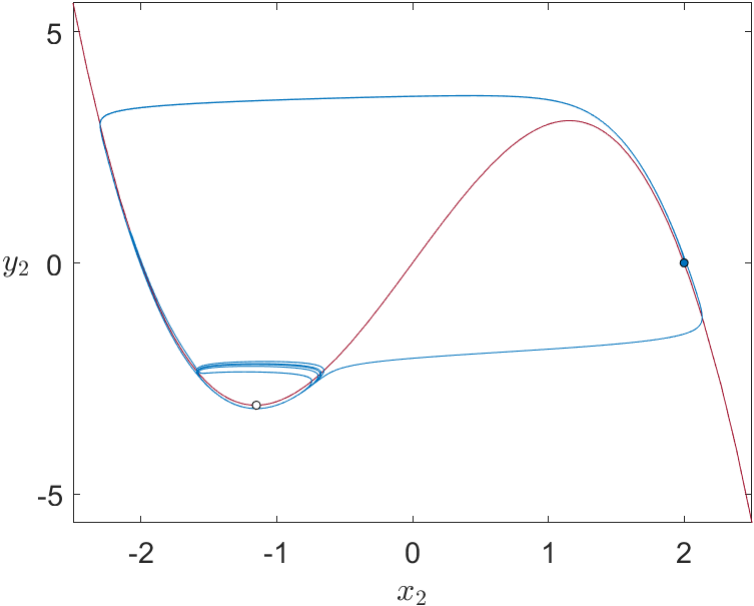}
		\\ (E)
   \end{center}}\quad
   \parbox{0.3\linewidth}{\begin{center}
		\includegraphics[width=\linewidth]{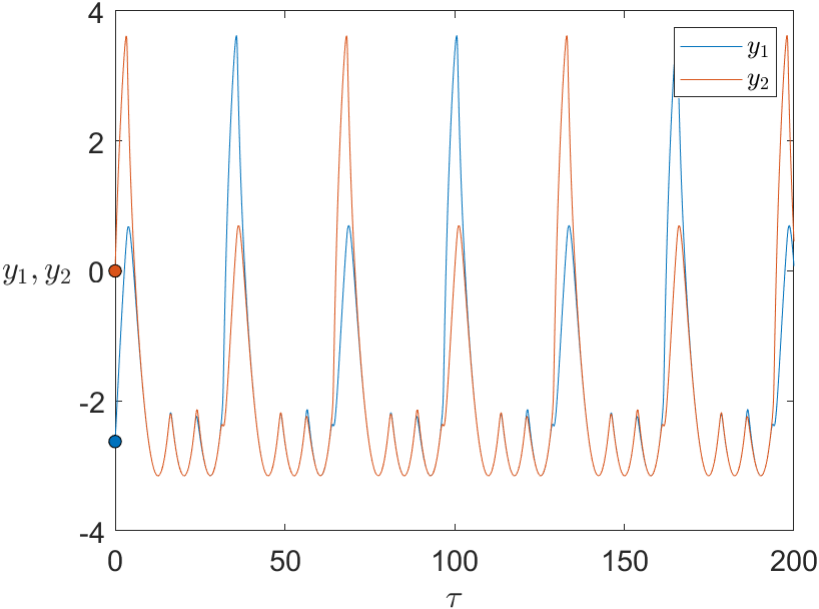}
		\\ (F)
   \end{center}}
	\caption{\small Illustration of Theorem~\ref{teo:doubleFb0}. 
            Mixed mode oscillations arising from a canard on a solution of \eqref{eq:2FHN} near a folded node, parameters $b=0$, $c=-1.1501075$, $k=0.5$ and $\varepsilon=0.5$, initial condition $\left(x_{1},x_{2},y_{1},y_{2}\right)=\left(-1.5,2,\va(-1.5),\va(2)\right)$. 
            (A) - Time course for $x_1(t)$ (blue) and $x_2(t)$ (orange). 
            (B) - Projection of the trajectory (blue) on the $\left(x_1,y_1\right)$ plane, initial condition on the blue dot, red critical manifold $C_0$.  
            (C) - Projection of the trajectory (blue) on the $\left(x_1,x_2\right)$ plane, initial condition on the blue dot.
            (D) - Projection of the trajectory (blue) on the $\left(x_2,y_1,y_2\right)$ space, initial condition on the blue dot, purple critical manifold $C_0$. 
            (E) - Projection of the trajectory (blue) on the $\left(x_2,y_2\right)$ plane, conventions as in (B). 
            (F) - Time course for $y_1(t)$ (blue) and $y_2(t)$ (orange).
         }
	\label{fig:MMO2}
\end{figure}

\subsection{Double fold points}\label{subsec:doublefolds}
\phantom{lixo}

The treatment of folded equilibria of Subsection~\ref{sec:foldedEq} above cannot be applied around the double fold points where two fold lines cross. 
This is because at these points both $\va'(x_1^*)=0$ and $\va'(x_2^*)=0$ and hence the first equation in \eqref{eq:slowx2} is not well defined, so these points have to be analysed separately.
In this section we deal with the dynamics around the double fold points $X=\left(x_1^*,x_2^*\right)=\left(x^*,\pm x^*\right)$, $x^*= 2\sigma/\sqrt{3}$, $\sigma=\pm 1$.

We start with the equations \eqref{eq:slowx1} and introduce a time rescaling of $\tau=t/\va'(x_1^*)\va'(x_2^*)$, that is singular at $x_i=x_i^*$, $i=1,2$.
Then the equations \eqref{eq:slowx1} transform into
\begin{equation}\label{eq:slowx3}
   \left\{\begin{array}{lclcl}
   x_1'&=&\va'(x_2)\left(
   x_1-(b+k)\va(x_1)+k\va(x_2)-c\right)&=&F_1(x_1,x_2)\\
   x_2'&=&\va'(x_1)\left(x_2-(b+k)\va(x_2)+k\va(x_1)-c\right)&=&F_2(x_1,x_2) \ .
   \end{array}\right.
\end{equation}
Note that $F_2(x_1,x_2)=F_1(x_2,x_1)$. 
In what follows, let $F(x_1,x_2)=\left(F_1(x_1,x_2),F_2(x_1,x_2)\right)$, be defined in $(x_1, x_2)\in \RR^2\backslash\{(x_1, x_2): \varphi'(x_1)\varphi'(x_2)=0\}$.

Recall that we are using  the notation $A_i^*$ for the sets  
$$
A_i^*=\left\{(x_1,x_2) \in\RR^2:\ \left(x_1,x_2,\va(x_1),\va(x_2)\right)\in A_i\right\}, \quad i=1,\ldots,4,
$$  
we make the analogous convention for $S_i^*$, $i=1,\ldots,4$ and $R^*$ (see Figure~\ref{fig:VarCrit}).
For $(x_1,x_2)\in A_i^*$ we have $\va'(x_j)<0$, $j=1,2$ hence the time rescaling preserves time orientation inside the $A_i^*$. 
Since $\va'(x)>0$ for $-2/\sqrt{3}<x<2/\sqrt{3}$ then the time rescaling also preserves time orientation in $R^*$ and reverses it in the $S_i^*$, $i=1,\ldots,4$.
The main result of this section is the following:

\begin{theorem}\label{teo:doubleFold}
   Let $X=\left(x^*,\pm x^*\right)$, $x^*= 2\sigma/\sqrt{3}$, $\sigma=\pm 1$ and let $\phi(X)=\left(\va(x^*),\pm\va(x^*)\right)$.
   It is possible to have a canard for \eqref{eq:2FHN} near the double fold point $\left(X,\phi(X)\right)$ under the following conditions on the parameters of \eqref{eq:2FHN}:
   \begin{enumerate}
      \renewcommand{\theenumi}{\roman{enumi}}
      \renewcommand{\labelenumi}{({\theenumi})}
      \item\label{item:canardA13} for $X=\left(x^*,x^*\right)$ (in either $\partial A_1^*$ or $\partial A_3^*$) if $\sigma c<2/\sqrt{3}-b\va(2/\sqrt{3})$;
      \item\label{item:canardA24} for $X=\left(x^*,-x^*\right)$ (in either $\partial A_2^*$ or $\partial A_4^*$) if $b+2k<3/8$ and $|c|< 2/\sqrt{3}-(b+2k)\va(2/\sqrt{3})$.
      \setcounter{lixo}{\value{enumi}}
   \end{enumerate}
   There are no canards in a neighbourhood of the double fold point $\left(X,\phi(X)\right)$ 
   under the following conditions on the parameters of \eqref{eq:2FHN}:
   \begin{enumerate}
      \renewcommand{\theenumi}{\roman{enumi}}
      \renewcommand{\labelenumi}{({\theenumi})}
      \setcounter{enumi}{\value{lixo}}
      \item\label{item:noCanardA13} for $X=\left(x^*,x^*\right)$ (in either $\partial A_1^*$ or $\partial A_3^*$) if $\sigma c>2/\sqrt{3}-b\va(2/\sqrt{3})$;
      \item\label{item:noCanardA24} for $X=\left(x^*,-x^*\right)$ (in either $\partial A_2^*$ or $\partial A_4^*$) if $b+2k>3/8$ and $|c|<(b+2k)\va(2/\sqrt{3})-2/\sqrt{3}$.
   \end{enumerate}
\end{theorem}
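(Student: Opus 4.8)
The plan is to mimic Proposition~\ref{cor:opensetcanard}, analysing the linearisation of the desingularised field $F$ from \eqref{eq:slowx3} at the double fold point $X=(x^*,\pm x^*)$, but now tracking carefully the orientation of the time rescaling. The first observation is that, since $\va'(x_1^*)=\va'(x_2^*)=0$ at such a point, both $F_1$ and $F_2$ vanish there, so $X$ is \emph{automatically} an equilibrium of \eqref{eq:slowx3}; no analogue of \eqref{eq:x1*} is required. Differentiating and using $\va'(x^*)=0$ annihilates the diagonal entries, leaving the trace-free matrix
\[
   DF(X)=\begin{pmatrix} 0 & P\\ Q & 0\end{pmatrix},
   \qquad
   P=\va''(x_2^*)\bigl(x_1^*-(b+k)\va(x_1^*)+k\va(x_2^*)-c\bigr),
\]
with $Q$ obtained from $P$ by exchanging $x_1^*\leftrightarrow x_2^*$ (because $F_2(x_1,x_2)=F_1(x_2,x_1)$). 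Its eigenvalues are $\pm\sqrt{PQ}$, so the folded equilibrium is a saddle when $PQ>0$ and a folded focus when $PQ<0$; in the focus case there is no canard.

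The subtle point, and the main obstacle, is orientation. For a single fold the rescaling reverses time on $A$, whereas here the factor $\va'(x_1)\va'(x_2)$ is \emph{positive} on both $A$ and $R$, precisely the two sheets a canard must join, so \eqref{eq:slowx3} keeps the real-time orientation there. Hence the literal reading of Lemma~\ref{lemma_canards} (``$DF$ has a negative eigenvalue'') is not decisive: a saddle always has one, yet not every saddle yields a canard. The correct requirement is that the eigendirection joining the attracting quadrant to the diagonally opposite repelling quadrant be \emph{stable}, so that the reduced flow is funnelled into the fold from the attracting side and shadowed into the repelling side. Identifying this ``corridor'' direction is where the two geometries differ.

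For $X=(x^*,x^*)$ (on $\partial A_1^*$ or $\partial A_3^*$) the attracting and repelling quadrants are opposite along the synchrony direction $(1,1)$, which is a genuine eigenvector because $x_1^*=x_2^*$ forces $P=Q$; its eigenvalue is $P$, so a canard exists iff $P<0$. For $X=(x^*,-x^*)$ (on $\partial A_2^*$ or $\partial A_4^*$) the corridor is the antisynchrony direction $(1,-1)$, which is an eigenvector only when $c=0$; in general I would instead single out the separatrix of negative slope, the only one joining the two quadrants, and compute its eigenvalue $-\operatorname{sgn}(P)\sqrt{PQ}$. Stability of the corridor then reads $P<0$ in the first case and $P>0$ in the second.

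It remains to convert these sign conditions, together with $PQ>0$, into the stated inequalities, using $\va''(x^*)=-6x^*$, $x^*=2\sigma/\sqrt3$ and the oddness of $\va$. In the diagonal case $P=Q=\va''(x^*)\bigl(x^*-b\va(x^*)-c\bigr)$ and a short computation gives $P<0\iff \sigma c<2/\sqrt3-b\va(2/\sqrt3)$, which is exactly part (\ref{item:canardA13}) against part (\ref{item:noCanardA13}). In the antidiagonal case, writing $\mu=2/\sqrt3-(b+2k)\va(2/\sqrt3)$, one finds $P=\tfrac{12}{\sqrt3}(\mu-\sigma c)$ and $Q=\tfrac{12}{\sqrt3}(\mu+\sigma c)$, hence $PQ=48(\mu^2-c^2)$ and $\operatorname{sgn}(P)=\operatorname{sgn}(\mu)$ on $PQ>0$; since $\va(2/\sqrt3)=16/(3\sqrt3)$ we have $\mu>0\iff b+2k<3/8$, so the saddle-with-stable-corridor regime $PQ>0$, $P>0$ is precisely $b+2k<3/8$ and $|c|<\mu$, giving the canard case (\ref{item:canardA24}), while $PQ>0$, $P<0$ gives the no-canard case (\ref{item:noCanardA24}). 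Finally the instances on $\partial A_3^*$ and $\partial A_4^*$ follow from those on $\partial A_1^*$ and $\partial A_2^*$ through the sign of $\sigma$ and the symmetry $\delta$, so it suffices to treat $\sigma=+1$.
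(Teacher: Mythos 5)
Your proposal is correct and takes essentially the same route as the paper's proof: the same desingularised field \eqref{eq:slowx3}, the same trace-free linearisation at the double fold with off-diagonal entries (your $P$, $Q$ are the paper's $\alpha$-type quantities and your $\mu$ is the paper's $\beta$), the same geometric criterion --- that the separatrix joining the attracting quadrant to the diagonally opposite repelling quadrant be the stable one, which the paper phrases as the stable manifold of the folded saddle intersecting $A_i^*$ --- and the same sign computations producing the four inequalities. The only cosmetic deviations are that the paper calls the $PQ<0$ case a centre (of the linearisation) and merely excludes it rather than asserting no canard there, and that your closing reduction to $\sigma=+1$ via $\delta$ is redundant since your computations already carry $\sigma$ throughout.
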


The parameters for the mixed-mode oscillations in Figures~\ref{fig:MMO1} and \ref{fig:MMO2} satisfy condition \eqref{item:canardA13}.
Note that condition \eqref{item:canardA24} only holds if $2/\sqrt{3}-(b+2k)\va(2/\sqrt{3})>0$ and that \eqref{item:noCanardA24} needs that $2/\sqrt{3}-(b+2k)\va(2/\sqrt{3})<0$.

A canard exists if there is a trajectory of \eqref{eq:2FHN} starting on the attracting part of the slow manifold $C_\varepsilon$ that crosses the fold line into the part of $C_\varepsilon$ that is not attracting.
The idea of the proof of Theorem~\ref{teo:doubleFold} is to show that under any of the conditions \eqref{item:canardA13} and \eqref{item:canardA24} the point $X$ is a saddle for $F$ and its stable manifold intersects the corresponding $A_i^*$, creating the possibility of canards near $X$, as in \cite[Lemma 3.1]{SW01}. 
For conditions \eqref{item:noCanardA13} and \eqref{item:noCanardA24} the idea is to show that although the point $X$ is also a saddle, its stable manifold does not intersect the corresponding $A_i^*$, while its unstable manifold does. Hence canards are not possible around that point.

\begin{proof}
   Let $X=\left(x^*,\pm x^*\right)$, $x^*= 2\sigma/\sqrt{3}$, $\sigma=\pm 1$.
   Since $\va'(x^*)=0$, then $\dfrac{\partial F_1}{\partial x_1}(X)=\dfrac{\partial F_2}{\partial x_2}(X)=0$.
   Therefore, $\tr DF(X)=0$ and hence, unless $\det DF(X)=0$,  the point $X$ is either a saddle or a centre for the linearisation of the rescaled equations \eqref{eq:slowx3}.
   We treat separately the two cases $X=\left(x^*, x^*\right)$ and $X=\left(x^*,- x^*\right)$, $x^*= 2\sigma/\sqrt{3}$, $\sigma=\pm 1$.

   \subsection*{First case}
   First, let $X=\left(x^*,x^*\right)$, $x^*= 2\sigma/\sqrt{3}$, $\sigma=\pm 1$ be the double folded equilibrium of \eqref{eq:slowx3}, lying in either $\partial A_1^*$ or $\partial A_3^*$.
   The derivative $DF(X)$ is given by
   \begin{equation}\label{eq:DFX}
      DF(X)=\begin{pmatrix}
      0
      &\va''(x^*)(x^*-b\va(x^*)-c)\\
      \va''(x^*)(x^*-b\va(x^*)-c)
      &0
      \end{pmatrix} .
   \end{equation}
   The matrix $DF(X)$ is symmetric and hence its eigenvalues are real.
   Also
   $$
   \det DF(X)=-\left(\va''(x^*)\right)^2(x^*-b\va(x^*)-c)^2\le 0
   $$ 
   and hence, unless $c=x^*-b\va(x^*)$, the point $X$ is a saddle.
   The eigenvalues of the matrix $DF(X)$ are $\lambda_1= \va''(x^*)(x^*-b\va(x^*)-c)$ and $\lambda_2=-\lambda_1$ with eigenspaces $$V_1=\left\{\left(s,s\right)\ s\in\RR\right\} \quad \text{and} \quad V_2=\left\{\left(s, -s\right)\ s\in\RR\right\}$$, respectively.
   Thus locally one of the trajectories of \eqref{eq:slowx3} that is tangent to $V_1$ lies in $ A_1^*$ or $ A_3^*$, according to the case in question and all the trajectories of \eqref{eq:slowx3} that are tangent to $V_2$ lie outside $ A_1^*$ and $ A_3^*$.

   Therefore, if $\lambda_1<0$ the stable manifold of $X$ intersects the corresponding $ A_j^*$, $j=1,3$ and canards are possible.
   Since for $x^*=2\sigma/\sqrt{3}$, $\sigma=\pm 1$ the sign of $\va''(x^*)$ is that of $-\sigma$, then $\lambda_1<0$ if and only if $\sigma c<2/\sqrt{3}- b\va(2/\sqrt{3})$.
   In this case then the trajectory tangent to $V_1$ that lies in $A_i^*$, $i=1,3$ goes into $X$, so a canard is possible and assertion \eqref{item:canardA13} follows.
   
   On the other hand, if $\lambda_1>0$, then the trajectory tangent to $V_1$ that lies in $A_i^*$, $i=1,3$ goes to the interior of $A_i^*$ in positive time.
   Moreover, at all points near $X$ near the boundary $\partial A_i^*$ the vector field associated to \eqref{eq:slowx3} points into the interior of $A_i^*$.
   Since in both $A_1^*$ and $A_3^*$ the time rescaling preserves time orientation, there cannot be a canard proving assertion \eqref{item:noCanardA13}.

   \subsection*{Second case}
   Now we address the case when the double folded equilibrium of \eqref{eq:slowx3} is $X=(x^*,-x^*)$, $x^*= 2\sigma/\sqrt{3}$, $\sigma=\pm 1$, lying in either $\partial A_2^*$ or $\partial A_4^*$.
   The matrix of the derivative $DF(X)$ at $X=(x^*,-x^*)$ is not symmetric.
   It is given by
   \begin{equation}\label{eq:DFxtil}
      DF(X)=\begin{pmatrix}
      0&
      -\va''(x^*)(x^*-(b+2k)\va(x^*)-c)
      \\
      -\va''(x^*)(x^*-(b+2k)\va(x^*)+c)&0
      \end{pmatrix} 
   \end{equation}
   hence $\tr DF(X)=0$, and $\det DF(X)=-{\va''(x^*)}^2\left[ (x^*-(b+2k)\va(x^*))^2-c^2\right]$.
   If $\det DF(X)>0$, then $DF(X)$ has a pair of purely imaginary eigenvalues.
   The point $X$ is a centre for the linearisation of the desingularised equations \eqref{eq:slowx3}.
   This happens when $|c|>\left|(b+2k)\va(x^*)-x^*\right|$, we do not include this situation in our analysis.

   If $\det DF(X)<0$, then the point $X$ is a saddle, since the eigenvalues of $DF(X)$ are $\lambda_\pm=\pm\sqrt{-\det DF(X)}$.
   This happens if and only if 
   \begin{equation}\label{eq:condSaddle}
      |c|<\left| x^*-(b+2k)\va(x^*)\right|=\left|  2/\sqrt{3}-(b+2k)\va( 2/\sqrt{3})\right|
   \end{equation}
   and in this case the two non-zero entries in $DF(X)$ have the same sign.

   Using the same arguments of the case $X=(x^*,x^*)$, it is possible to have canards if one branch of the stable manifold of $X$ lies in $A_i^*$, $i=2,4$.
   \emph{Canards} are not possible if the condition fails.
   This is determined by the directions of the eigenspace associated to the negative eigenvalue $\lambda_-$, which is the set $V_-=\{(s(\alpha,\lambda_-): s\in\RR\}$ where $$\alpha=-\va''(x^*)(x^*-(b+2k)\va(x^*)-c)=\dfrac{\partial F_1}{\partial x_2}(X).$$
   There are no canards if $\alpha<0$, they are possible if $\alpha>0$.

   It remains to show that the conditions $\alpha>0$ and \eqref{eq:condSaddle} are equivalent to assertion \eqref{item:canardA24} in the statement and that $\alpha<0$ and \eqref{eq:condSaddle} are equivalent to the assertion \eqref{item:noCanardA24}.

   To do this, let $\beta=2/\sqrt{3}-(b+2k)\va( 2/\sqrt{3})$, and note that $\alpha=-\va''(2/\sqrt{3})\left(\beta-\sigma c\right)$. 
   Since $\va''(2/\sqrt{3})=-12/\sqrt{3}<0$ then $\alpha$ has the same sign as $\beta-\sigma c$.
   Also note that because $\sigma =\pm 1$ then condition \eqref{eq:condSaddle} is equivalent to $|\sigma c|<|\beta|$.

   If $\beta>0$, then condition \eqref{eq:condSaddle} is equivalent to $-\beta<\sigma c<\beta$ and this implies that $\alpha>0$.  
   Since $\va(2/\sqrt{3})=16/3\sqrt{3}$ then $\beta>0$ if and only if $b+2k<3/8$.

   If $\beta<0$, then condition \eqref{eq:condSaddle} is equivalent to $\beta<\sigma c<-\beta$ and this implies that $\alpha<0$. 
   Also $\beta<0$ if and only if $b+2k>3/8$.

   We have established that $\alpha>0$ (implying canards are possible) if and only if \eqref{eq:condSaddle} holds and $\beta>0$. 
   The necessary conditions are that $b+2k<3/8$ and $|c|<|\beta|=\beta$, as in assertion \eqref{item:canardA24}.
   We have also established that $\alpha<0$ (implying canards are not possible) if and only if \eqref{eq:condSaddle} holds and $\beta<0$, and the necessary conditions are that $b+2k>3/8$ and $|c|<|\beta|=-\beta$, as in assertion \eqref{item:noCanardA24}.
\end{proof}

In the first part of Theorem~\ref{teo:doubleFold} we only claim that \emph{canards} are possible because it is not clear that trajectories of \eqref{eq:slowx3} that tend to the boundary of the $A_i^*$ correspond to trajectories of \eqref{eq:2FHN} that continue into $C_0\backslash A$.
This is because the time rescaling reverts time orientation in the $S_i$, the components of $C_0$ where the equilibria of the fast equation are saddles, as shown in Figure~\ref{fig:DFA1-}.
Deciding if canards exist in each case requires a detailed analysis that is beyond the scope of this article, as mentioned in Section \ref{sec:discussion}.
In the special case $b=0$ the results of Subsection~\ref{subsec:b0} may be used to improve the result to cover the cases shown in Figures~\ref{fig:MMO1} and \ref{fig:MMO2}.

\begin{figure}
   \parbox{.35\linewidth}{
   \begin{center}
      \includegraphics[width=\linewidth]{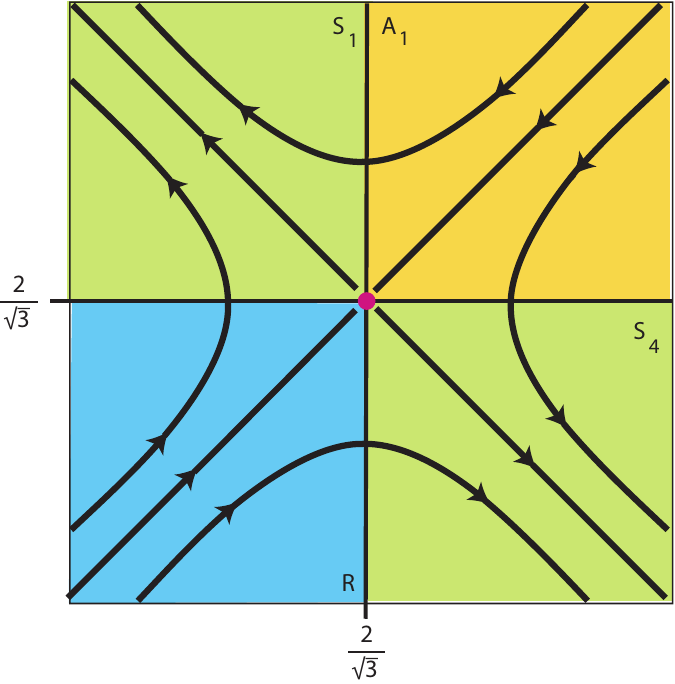}\\
      time $\tau$
   \end{center}
   }
   \qquad
   \parbox{.35\linewidth}{
   \begin{center}
      \includegraphics[width=\linewidth]{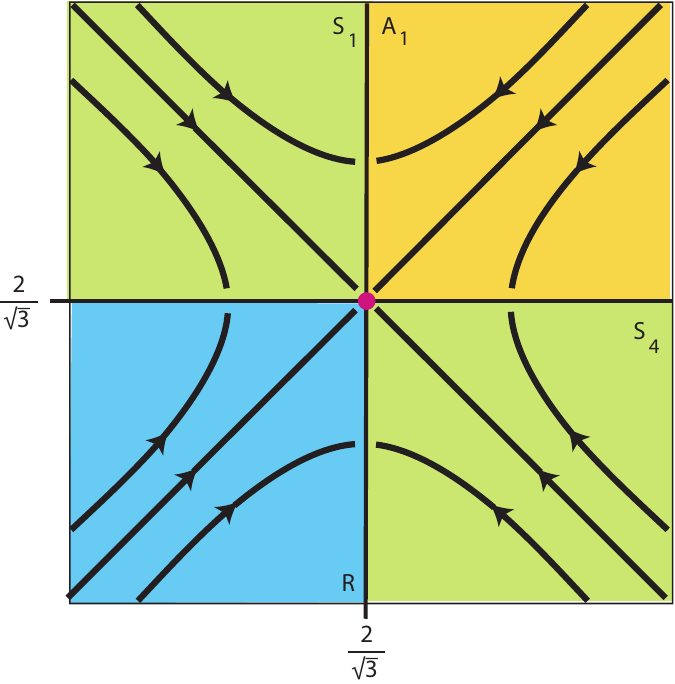}\\
      time $t$
   \end{center}
   }
   \caption{\small On the left, trajectories of \eqref{eq:slowx3} on the $\left(x_1,x_2\right)$ plane, near the double fold point $X=\left(2/\sqrt{3},2/\sqrt{3}\right)\in\partial A_1^*$ for $\lambda_1<0$, in the rescaled time $\tau$.
            On the right, trajectories of \eqref{eq:slowx1} near the same point, in the original time $t$. 
            The rescaling inverts the time orientation in regions $S_j$ (green), $j=1,\ldots,4$ and preserves it in regions $R$ (blue) and $A_j$ (yellow), $j=1,\ldots,4$. 
            The dynamics of \eqref{eq:slowx1} is not well defined when $x_i=2/\sqrt{3}$, $i=1,2$.
         }
   \label{fig:DFA1-}
\end{figure}

\begin{theorem}\label{teo:doubleFb0}
   Let $X=\left(x^*,x^*\right)$, $x^*= 2\sigma/\sqrt{3}$, $\sigma=\pm 1$ be a double fold point.
   If $b=0$, $k>0$ and $\sigma c<2/\sqrt{3}$, then for $\sigma c$ close to $2/\sqrt{3}$ and small $\varepsilon>0$ there is a canard for \eqref{eq:2FHN} close to $X$.
\end{theorem}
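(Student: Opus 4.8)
The plan is to deduce a \emph{genuine} canard at the double fold as a limit of the single-fold canards already produced by Proposition~\ref{cor:opensetcanard}, instead of analysing the desingularised system \eqref{eq:slowx3} directly. The point is that Theorem~\ref{teo:doubleFold} only shows canards are \emph{possible} at a double fold, the obstruction being that the rescaling in \eqref{eq:slowx3} reverses time in the saddle regions $S_i$; by contrast, Proposition~\ref{cor:opensetcanard} yields an \emph{actual} canard whenever a single folded equilibrium on $\partial A$ is a saddle or an unstable node. So the strategy is to exhibit, for $\sigma c$ just below $2/\sqrt3$, a single folded equilibrium $(x_1^*,2\sigma/\sqrt3)$ on $\partial A_1^*$ (for $\sigma=+1$) or $\partial A_3^*$ (for $\sigma=-1$) that is an unstable node and that limits to $X$ as $\sigma c\to 2/\sqrt3$.

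First I would locate this folded equilibrium. With $b=0$ its defining relation \eqref{eq:dva} reads $k\bigl(\va(x_1^*)-\va(x^*)\bigr)=c-x^*$ with $x^*=2\sigma/\sqrt3$. On $\partial A_1^*$ (resp. $\partial A_3^*$) one has $\va(x_1^*)-\va(x^*)<0$ (resp. $>0$), so $k>0$ forces $\sigma c<2/\sqrt3$, consistent with the hypothesis; and as $\sigma c\to 2/\sqrt3^-$ the right-hand side tends to $0$, whence $\va(x_1^*)\to\va(x^*)$ and therefore $x_1^*\to x^*$ along the attracting branch $|x_1^*|>2/\sqrt3$. Thus the single fold point $X^*=(x_1^*,x^*,\va(x_1^*),\va(x^*))$ approaches the double fold $(X,\phi(X))$.

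Next I would classify $X^*$ using \eqref{eq:detDHb0}. Since $\tr DH=1$, the equilibrium is an unstable node precisely when $0<\det DH\le 1/4$. Writing $\sigma c=2/\sqrt3-\delta$ with $\delta>0$ small and using the quadratic vanishing $\va(x_1^*)-\va(x^*)\sim\tfrac12\va''(x^*)(x_1^*-x^*)^2$, the relation \eqref{eq:dva} gives $|x_1^*-x^*|\sim\sqrt{\delta}$, so the bracket $x_1^*+x^*-2c$ in \eqref{eq:detDHb0} behaves like $\pm|x_1^*-x^*|\sim\sqrt{\delta}$ (the remaining $\mathcal O(\delta)$ terms being subdominant); a short sign check in each case shows the bracket carries exactly the sign that, combined with $-k\va''(x^*)$, makes $\det DH>0$. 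Hence $\det DH\to 0^+$ as $\delta\to0$, so for $\sigma c$ close enough to $2/\sqrt3$ one has $0<\det DH<1/4$, the eigenvalues are real and positive, and $X^*$ is an unstable node.

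By Proposition~\ref{cor:opensetcanard} there is then a canard of \eqref{eq:2FHN} through $X^*$ for small $\varepsilon>0$, and since $X^*\to(X,\phi(X))$ this canard lies close to $X$, as claimed. The main obstacle is the eigenvalue discriminant: Table~\ref{table:folded} only records ``node or focus'' on these boundaries, and a focus carries complex eigenvalues and yields no single-fold canard. The content of the theorem is precisely that the degeneration $\det DH\to 0^+$ as $\sigma c\to 2/\sqrt3$ selects the real (node) branch, upgrading the merely \emph{possible} canards of Theorem~\ref{teo:doubleFold} to genuine ones in a neighbourhood of the double fold. A secondary point to verify is that $x_1^*$ tends to $x^*$ while staying on the correct sheet $\partial A_j^*$, which the sign analysis of \eqref{eq:dva} above confirms.
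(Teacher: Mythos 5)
Your proposal is correct and follows essentially the same route as the paper's own proof: both locate the single-fold folded equilibrium via \eqref{eq:dva}, show it accumulates on the double fold point as $\sigma c\to 2/\sqrt{3}$, use \eqref{eq:detDHb0} to see $\det DH\to 0^+$ so that $0<\det DH<1/4$ with $\tr DH=1$ forces an (unstable) node rather than a focus, and then invoke the folded-node canard result (your Proposition~\ref{cor:opensetcanard} is the paper's packaging of Lemma 2.3 of \cite{SW01}, which the paper cites directly). Your explicit $\sqrt{\delta}$ asymptotics for $|x_1^*-x^*|$ and the bracket $x_1^*+x^*-2c$ is a slightly more quantitative rendering of the paper's limit argument, but it is the same idea.
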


\begin{proof}
   In Table~\ref{table:folded} the conditions of the statement correspond to the first rows in the regions $\partial A_1^*$ and $\partial A_3^*$, where the point $\left(x_1^*,x_2^*\right)$, with $x_2=x^*$ and $\sigma x_1^*>2/\sqrt{3}$ is either a node or a focus.
   The idea of the proof is to show  that:
   \begin{enumerate}
      \renewcommand{\theenumi}{(\alph{enumi})}
      \renewcommand{\labelenumi}{{\theenumi}}
      \item\label{item:a} as $c\to 2\sigma/\sqrt{3}$ these points accumulate on $X$, and 
      \item\label{item:b} for $\sigma c$ close to $2/\sqrt{3}$  the point is a node.
   \end{enumerate}
   Then it will follow, either by Lemma 2.3 of \cite{SW01}, or by the results of \cite{GuckenheimerHaduc} and \cite{Guckenheimer}, that there are solutions of \eqref{eq:2FHN} starting close to $X$ in the attracting part of $C_\varepsilon$ that follow the repelling part of $C_\varepsilon$ for some time.

   For \ref{item:a} we use the fact that the restriction of the function $\va(x)$ to $\sigma x>2/\sqrt{3}$ is a diffeomorphism.
   Therefore, for any $\delta_1>0$ there exists a $\delta_2>0$ such that if $\sigma x_2>2/\sqrt{3}$ and $\left| \va(x_1^*)-\va(x_2^*) \right|=\delta_2$, then $\left|x_1^*-x_2^*\right|<\delta_1$.
   Using \eqref{eq:dva}, if $|c-x_2^*|<k\delta_2$  then $\left| \va(x_1^*)-\va(x_2^*) \right|<\delta_2$ and the result follows.

   For \ref{item:b} we have already established in Subsection~\ref{subsec:b0} that in this case $\det DH\left(x_1^*,x_2^*\right)>0$.
   From the expression \eqref{eq:detDHb0} and assertion (a) it follows that when $c$ tends to $2\sigma/\sqrt{3}$ then $\det DH\left(x_1^*,x_2^*\right)$ tends to 0. 
   Therefore, for $c$ close to $2\sigma/\sqrt{3}$ we have $0<\det DH\left(x_1^*,x_2^*\right)<1/4$ and since $\tr DH\left(x_1^*,x_2^*\right)=1$, the point $\left(x_1^*,x_2^*\right)$ is a node.
\end{proof}

\section{Discussion}\label{sec:discussion}
This article is an analysis of the dynamics of two identical FitzHugh--Nagumo equations symmetrically coupled through the slow equations. 
In this section we compare our results to findings by other authors using different types of coupling.
Most authors only deal with the fast-to-fast coupling, probably motivated by its neurophysiological interpretation as an electrical synapse.
The coupling described in the present article does not have a natural interpretation in neurophysiology but can be realised experimentally in electrical circuits, as is  the case of the simultaneous slow-to-slow and fast-to-fast connections in Kawato {\sl et al.} \cite{KawatoEtAl1979}.
We have focused on {\em synchrony}, {\em bistability} and on {\em mixed-mode oscillations} generated by {\em canards}.

\subsection{Synchrony and bistability}
\phantom{lixo}

When two identical equations are coupled symmetrically there are always synchronous solutions, as is immediate from the symmetry. 
The interesting questions are whether the synchrony space contains non trivial dynamics, whether the  synchronous solutions are asymptotically stable and hence visible, and whether approximate synchrony persists when the symmetry is broken.
Similarly, coupling equations that have some additional symmetry always yields antisynchronous solutions to which analogous comments are applicable.
Section~\ref{sec:synchro} of this article is concerned with providing an answer to these questions in the present context.
\bigbreak

We use the symmetry of the coupled equations to obtain in Theorem~\ref{teo:synchro} attracting persistent synchronous solutions for all values of the parameters $b$ and $c$ in FHN and for sufficiently large coupling strengh $k$.
In particular we show in Corollary~\ref{cor:Relax} that if $b>1/4$ there are attracting synchronous periodic solutions even with negative coupling strength.
We also show in Corollary \ref{cor:synchroPersists} that approximate synchrony will persist if the symmetry is broken either by taking slightly different coupling constants or by a small change in the equations governing one of the cells.

Synchronised periodic solutions were also observed analytically by Campbell and Waite \cite{CampbellWaite} and numerically in passing by Hoff {\sl et al.} \cite{HoffEtAl2014} when two FHN were coupled symmetrically and bidirectionally through the fast equations, but the latter do not report on persistence under symmetry breaking, their focus being on chaotic solutions.
Kawato  {\sl et al.} \cite{KawatoEtAl1979} obtain both the synchrony and its persistence under symmetry breaking perturbations for two FHN with simultaneous slow-to-slow and fast-to-fast connection.
Pedersen {\sl et al.} \cite{PedersenEtal2022} provide bifurcation diagrams in the synchrony plane for two FHN  coupled symmetrically and bidirectionally through the fast equations, showing steady-state and Hopf bifurcations that create both stable and unstable limit cycles.
Experimental results on synchrony of a radio engineering circuit that simulates two FHN with different values of the parameter $c$ coupled through the fast equations with time delay are presented in both Ponomarenko {\sl et al.} \cite{Ponomarenko} and Kulminskiy  {\sl et al.} \cite{Kulminskiy}, with the latter also containing numerical simulations. 
Both papers observe synchrony both in the symmetric case and in a large region in parameter space where the symmetry is broken. 
However, since their connection has a time delay, most of their findings are more similar to the antisynchrony situation discussed below.

Perfect and approximate synchrony are obtained analytically by Plotnikov and Fradkov \cite{Plotnikov} for two FHN differing on the parameter $c$ and coupled through the fast equations.
They use a Lyapunov function to estimate the precision of synchronisation (cf. Definition~\ref{def:appsynchro}) for large differences in the two values of the parameter $c$.
For two identical FHN they also obtain perfect synchrony.

It is clear from all the results discussed above and ours that asymptotically stable synchrony arises from all types of linear coupling, even very far from the symmetric situation.
An interesting future task is to extend the analysis done by Plotnikov and Fradkov \cite{Plotnikov} to a complete bifurcation study of all the types of coupling and of symmetry breaking.
\bigbreak

When the parameter $c$ is zero the individual FitzHugh--Nagumo equations becomes symmetrical under change of sign of both variables.
This provides an additional symmetry to the coupled equations, with a flow-invariant fixed point subspace. 
From this we obtain Theorem~\ref{teo:antisynchro} that implies the coexistence of different stable solutions.
We show that the two sets of stable solutions persist when the additional symmetry is broken for small $c\ne 0$.
The additional symmetry does not hold when the equations are coupled through the fast equations, so the bistability stated in Corollary \ref{cor:bistabilityPersists} is a characteristic feature of the type of coupling analysed here.

Antisynchrony, in the sense of oscillations with a half period difference, was observed by Ponomarenko {\sl et al.} \cite{Ponomarenko} and by Kulminskiy {\sl et al.} \cite{Kulminskiy} in experiments with two FHN electrical circuits coupled through the fast equations with a time delay.
Although their system has $c\ne 0$ and $b=0$, apparently the time delay induces a similar effect to the symmetry. 
They use a delay that seems to be close to half the period of the relaxation oscillations and the effect is observed both in two identical FHN and in the case of systems with different values of the parameter $c$ persisting for a large difference in parameter values.

Bistability has been found in the work by Kawato {\sl et al.} \cite{KawatoEtAl1979} coupling together the slow variables and the fast variables to each other in two FHN systems. 
When only the slow variables are coupled, they have obtained the coexistence of synchrony and antisynchrony through Hopf bifurcation in a parameter $I$ added to the fast equations.
They also indicate that the bistability persists when the fast variables are also coupled with a small coupling constant.

Multistability has been found not associated to antisynchrony by Campbell and Waite \cite{CampbellWaite} who considered electrical coupling of two FHN models through the fast equation. 
They show that when the magnitude of the coupling is strengthened, periodic orbits undergo several bifurcations (namely resonant Hopf-Hopf interactions) leading to the coexistence of a chaotic attractor with an attracting periodic orbit.
This indicates that there might be other interesting dynamical structures that do not arise from the symmetry, a direction for further research.
Note that neither \cite{CampbellWaite} nor \cite{KawatoEtAl1979} used the fast-slow structure.

\subsection{Mixed-mode oscillations}
\phantom{lixo}

The symmetry in the model we have analysed limits the possible dynamic outcomes, and yet it still allows a number of interesting features.
The presence of a canard induces small amplitude symmetry breaking oscillations before a solution converges to a synchronised periodic solution of large amplitude, shown in Figure~\ref{fig:canard}. 
Oscillations that alternate between different amplitudes, called {\em mixed-mode}, have been associated to {\em canards} ever since they were first described in the van der Pol equations in Beno\^it {\sl et al.} \cite{Canard}.
A detailed description of the way they appear in fast-slow systems can be found in Desroches {\sl et al.} \cite{Desroches2012} and for the forced van der Pol equations in a recent article by J.~Guckenheimer \cite{Guckenheimer2025}.

Santana {\sl et al.} \cite{SantanaEtal2021} report ``transient chaos'', a complicated and long transient, in their numerical description of two FHN with different parameter values coupled through the fast equations.
Canard-induced small amplitude transients were also found by Krisitiansen and Pedersen \cite{KristiansenPedersen} in two identical FHN coupled through the fast equations.
In their case the attracting periodic solution does not lie in the synchrony plane.
The small amplitude transients arise from a canard at a cusp point in the critical manifold.
In our case this manifold does not have cusps, so the origin of the transient oscillations is not the same, here they arise at a folded node -- Proposition~\ref{cor:opensetcanard}.

Sustained mixed-mode oscillations arising from canards, like those in Figures~\ref{fig:MMO1} and \ref{fig:MMO2}, are ubiquitous in coupled FHN.
They appear in asymmetric coupling of the fast equation to the slow one in Doss-Bachelet {\sl et al.} \cite{DFP2003} and in Krupa {\sl et al.} \cite{Krupa2012} where three different time scales are considered.
Desroches {\sl et al.} \cite{DKO2008} find them in self-coupled FHN analogous to unidirectional coupling through the fast equation.
Krupa {\sl et al.} \cite{Krupa2014} find them with both the slow equations and the fast equations coupled together, when the two FHN have different parameter values. 
It is worth to read Yu \emph{et al.} \cite[pp. 364]{yu2023canard}, where the authors investigate a Hopf bifurcation and the emergence of two types of mixed mode oscillations in a periodically-perturbed Bonhoeffer-van der Pol circuit.  
We also refer the reader to \cite{chen2022parametrically} where the fast-slow dynamics of a parametrically driven oscillator with triple-well potential has been studied. 
For some parameter values, the perturbed model combined with multiple frequency components may produce fast large oscillations and slow small oscillations clustered into the singular periodic attractor. 
Authors have shown that the critical manifold is folded with attracting and repelling branches separated by a fold.
Such folded singularities can lead to oscillation behaviours with separated amplitudes -- pairs of small-amplitude oscillations about stable manifold alternate with pairs of large-amplitude oscillations that occur due to jumps from one stable branch to the other.

By combining the dynamics within the synchrony and antisynchrony subspaces, we prove bistability in \eqref{eq:2FHN}, where two types of solution coexist as hyperbolic attractors. 
They persist under small perturbation of the parameters. 
The symmetric coupling allows a manageable analysis of the system. 
Symmetry also plays the role of an ``organizing center'' -- some hyperbolic structures in the full equivariant case persist in the asymmetric flow.
 
To the best of our knowledge this is the first time sustained mixed-mode oscillations appear in symmetric coupling of identical equations.
It is possible that this is a consequence of the bistability arising from the additional symmetry of the equations: the large relaxation oscillations make excursions close to the antisynchrony plane, while the small amplitude canard oscillations remain in a neighbourhood of the synchrony plane.
If this is the case, then it will not be found when two identical FHN are coupled symmetrically through the fast equations.
The sustained mixed-mode oscillations arise close to the point where the two fold lines cross transversely as we established in Theorem~\ref{teo:doubleFb0}. 
The general analysis of the dynamics in the neighbourhood of such a point is one of the tasks we intend to pursue in the near future.

\subsection{One directional coupling}
\phantom{lixo}

Because our results are based on normal hyperbolicity, they will persist under small symmetry breaking perturbations, as pointed out in Corollaries~\ref{cor:bistabilityPersists} and \ref{cor:synchroPersists}.
However, asymmetric coupling, where one FHN is forcing the other, as in
\begin{equation}\label{eq:asym}
   \left\{
   \begin{array}{rcl}
               \varepsilon\dot{x}_1 &=& -y_1+\va(x_1)\\
               \varepsilon\dot{x}_2 &=&-y_2+\va(x_2)\\
               \dot{y}_1 &=&x_1-by_1-c +ky_2\\
               \dot{y}_2 &=& x_2-by_2-c	
   \end{array}
   \right.
   \qquad
   \va(x)=4x-x^3
   \qquad b,c,k\in \RR\quad k\ne 0 \ 
\end{equation}
is not part of this scenario.
Numerical evaluation of the Lyapunov spectrum of \eqref{eq:asym}, presented in \cite{HoffEtAl2014} for unidirectional coupling of two FHN through the fast equation, finds regions in the parameter space with frequency-locked solutions and other regions with chaotic behaviour.
In Figure~\ref{fig:chaoticMMO} we show a numerical solution of \eqref{eq:asym} that looks like a chaotic mixed-mode oscillation.
This is another research direction we intend to pursue, but it is beyond the scope of the present article.
\bigbreak

After the present study, we hope this work improves the research about theoretical and experimental applications of coupled FHN models. 
Possible directions should consider the importance of transient dynamics and the possibility of finding multiple attractors coexisting for the same parameter combinations.  
Additionally, one potential natural extension of this work is the investigation of the persistence of transient chaotic dynamics and multistability when several identical FHN models are coupled through different schemes. 

\begin{figure}
   \parbox{0.3\linewidth}{\begin{center}
      \includegraphics[width=\linewidth]{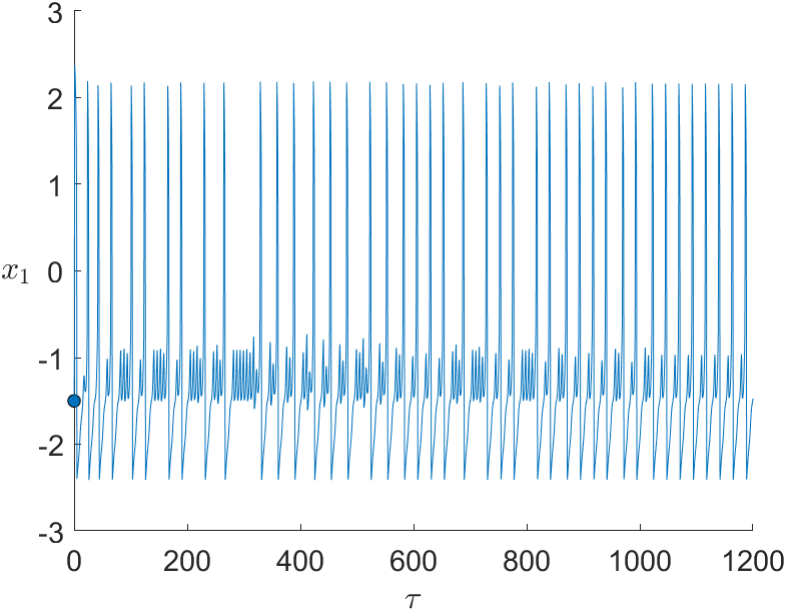}
      \\ (A)
   \end{center}}\quad
   \parbox{0.3\linewidth}{\begin{center}
      \includegraphics[width=\linewidth]{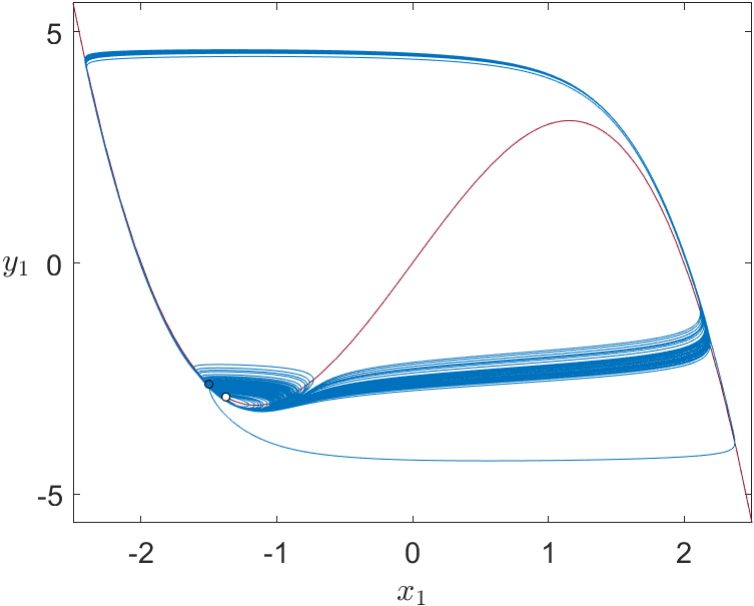}
      \\ (B)
   \end{center}}\quad
   \parbox{0.3\linewidth}{\begin{center}
      \includegraphics[width=\linewidth]{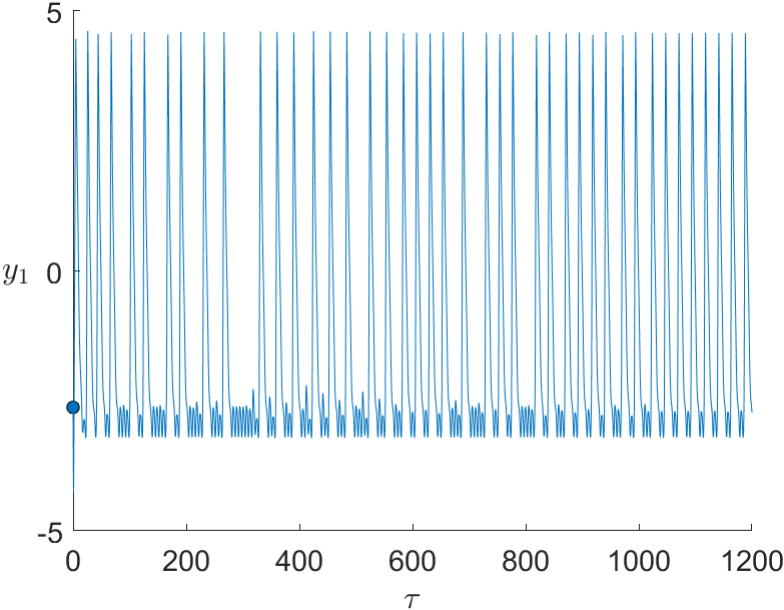}
      \\ (C)
   \end{center}}
   \\
   \parbox{0.3\linewidth}{\begin{center}
      \includegraphics[width=\linewidth]{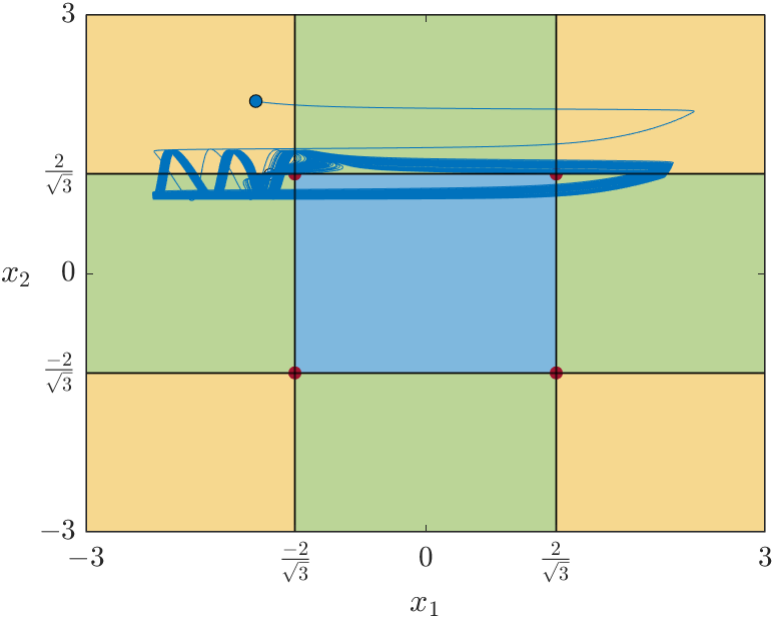}
      \\ (D)
   \end{center}}\quad
   \parbox{0.3\linewidth}{\begin{center}
      \includegraphics[width=\linewidth]{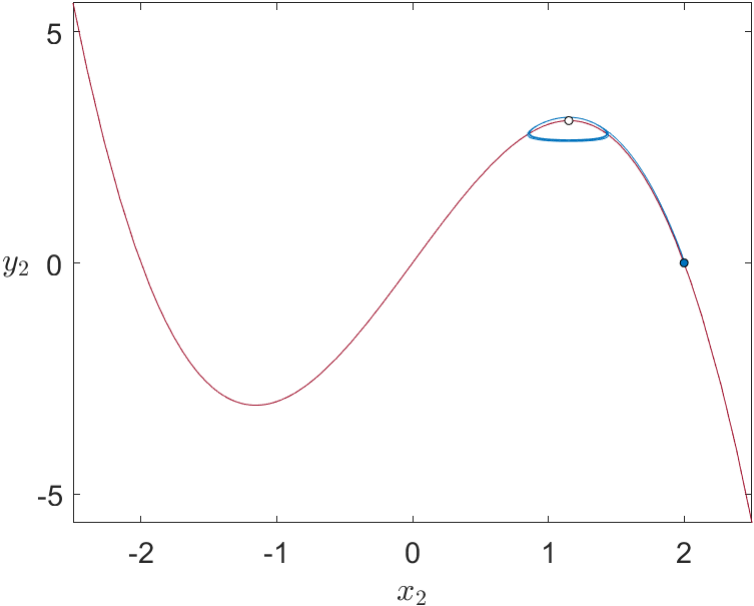}
      \\ (E)
   \end{center}}\quad
   \parbox{0.3\linewidth}{\begin{center}
      \includegraphics[width=\linewidth]{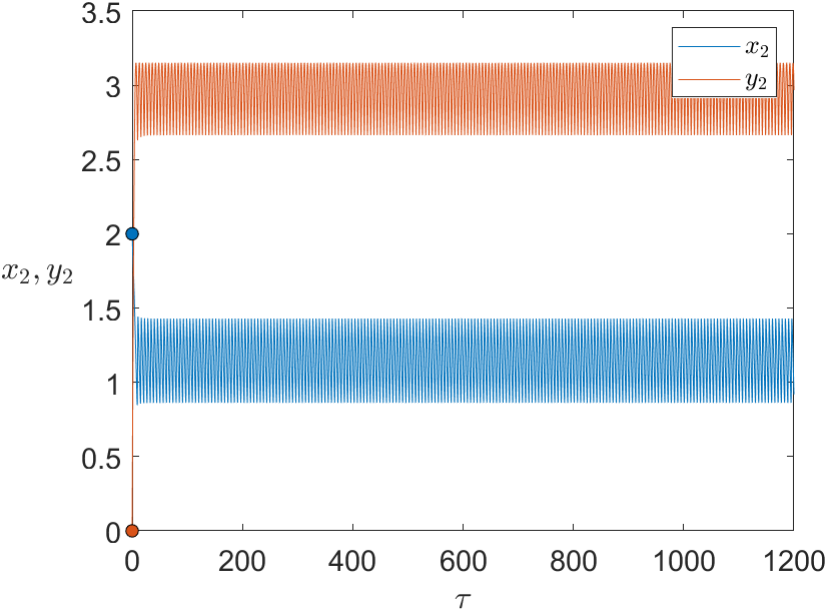}
      \\ (F)
   \end{center}}
   \caption{\small Chaotic mixed-mode oscillations in \eqref{eq:asym}, parameters $b=0$, $c=1.1505466726$, $k=0.820125$ and $\varepsilon=0.5$, initial condition $\left(x_{1},x_{2},y_{1},y_{2}\right)=\left(2,2,0,0\right)$. These numerics illustrate the combination of large and small oscillations in a persistent and irregular way, possibly ``chaotic'' in a sense that deserves to be explored.
            (A) - Time course for $x_1(t)$. 
            (B) - Projection of the trajectory (blue) on the $\left(x_1,y_1\right)$ plane, initial condition on the blue dot, red critical manifold $C_0$. 
            (C) - Time course for $y_1(t)$.
            (D) - Projection of the trajectory (blue) on the $\left(x_1,x_2\right)$ plane, initial condition on the blue dot.
            (E) - Projection of the trajectory (blue) on the $\left(x_2,y_2\right)$ plane, initial condition on the blue dot, red critical manifold $C_0$, equilibrium on the white dot. 
            (F) - Time course for $x_2(t)$ (blue) and $y_2(t)$ (orange). 
         }
   \label{fig:chaoticMMO}
\end{figure}

\section*{Acknowledgments}
The first and second authors had financial support from CMUP, member of LASI, (UIDP/00144/2020),  financed by Funda\ca o para a Ci\^encia e a Tecnologia, Portugal (FCT/MCTES) through national funds. 
The third author has been supported by the Project CEMAPRE/REM (UIDB/05069/2020) financed by FCT/MCTES through national funds. The authors are also indebted to two reviewers for the suggestions which helped to improve the readability of this manuscript.



\begin{thebibliography}{99}
   \bibitem{Canard}
   E. Beno\^it, J.F. Callot, F. Diener, M. Diener,
   {\sl Chasse au canard}, 
   Collectanea Mathematica, {\bf 31--32 (1--3)}  37--119, 1981


   \bibitem{BinczakEtal2006}
   S. Binczak, S. Jacquir, J.-M. Bilbault, V.B. Kazantsev, V.I. Nekorkin,
   {\sl Experimental study of electrical FitzHugh--Nagumo neurons with modified excitability},
   Neural Networks {\bf 19} 684--693, 2006


   \bibitem{CampbellWaite}
   S.A. Campbell, M. Waite, 
   {\sl Multistability in coupled Fitzhugh-Nagumo oscillators},
   Proceedings of the Third World Congress of Nonlinear Analysts, Part 2 (Catania, 2000), Vol. 47, 2001, 1093--1104, 2001


   \bibitem{CPRG2024}
   D. Cebri\'an-Lacasa, P. Parra-Rivas, D. Ruiz-Reyn\'es, L. Gelens, 
   {\sl Six decades of the FitzHugh--Nagumo model:
   A guide through its spatio-temporal dynamics and influence across disciplines} 
   Physics Reports {\bf 1096} 1--39, 2024


   \bibitem{chen2022parametrically}
   D. Chen, N. Wang, Z. Chen, Y. Yu, 
   {\sl Parametrically excited vibrations in a nonlinear damped triple-well oscillator with resonant frequency},
   Journal of Vibration Engineering \& Technologies 10(2), 781--788, 2022


   \bibitem{DKO2008}
   M. Desroches, B. Krauskopf, H. Osinga,
   {\sl Mixed-mode oscillations and slow manifolds in the self-coupled {FitzHugh--Nagumo} system},
   Chaos {\bf 18} 015107, 2008


   \bibitem{Desroches2012}
   M. Desroches, J. Guckenheimer, B. Krauskopf, C. Kuehn, H. Osinga, M. Wechselberger,
   {\sl Mixed-mode oscillations with multiple time scales},
   SIAM Review {\bf 54 (2)} 211--288, 2012


   \bibitem{Govaerts}
   A. Dhooge, W. Govaerts, Yu.A. Kuznetsov, H.G.E. Meijer, B. Sautois, 
   {\sl New features of the software MatCont for bifurcation analysis of dynamical systems}, Mathematical and Computer Modelling of Dynamical Systems {\bf 14 no.~2} 147--175, 2008.


   \bibitem{Dmitrichev}
   A.S. Dmitrichev, D.V. Kasatkin, V. V. Klinshov, S.Yu. Kirillov, O.V. Maslenikov, D.S. Shchapin, V.I. Nekorkin,
   {\sl Nonlinear dynamical models of neurons: Review}
   Izvestia VUZ Applied Nonlinear Dynamics {\bf 26(4)} 5--58, 2018


   \bibitem{DFP2003}
   C. Doss-Bachelet, J.-P. Fran\c coise,  C. Piquet,
   {\sl Bursting oscillations in two coupled FitzHugh--Nagumo systems}
   ComPlexUs, {\bf 1(3)} 01--111, 2003


   \bibitem{Egorov}
   N.M. Egorov, I.V. Sysoev, V.I. Ponomarenko, M.V. Sysoeva,
   {\sl Complex regimes in electronic neuron-like oscillators with sigmoid coupling},
   Chaos, Solitons and Fractals {\bf 160} 112171, 2022


   \bibitem{Fenichel71}
   N.~Fenichel, 
   {\sl Persistence and Smoothness of Invariant Manifolds for Flows},
   Indiana University Mathematics Journal, {\bf 21(3)} 193--226, 1971


   \bibitem{Fenichel}
   N.~Fenichel, 
   {\sl Geometric singular perturbation theory for ordinary  differential equations}, 
   Journal of Differential Equations {\bf 31(1)} 53--98,  1979


   \bibitem{Fitzhugh1}
   R.~FitzHugh, 
   {\sl Thresholds and plateaus in the Hodgkin-Huxley nerve equations}, 
   Journal of General Physiology {\bf 43 no.~5} 867--896, 1960


   \bibitem{Fitzhugh2}
   R.~Fitzhugh, 
   {\sl Impulses and physiological states in theoretical models of  nerve membrane}, 
   Biophysical Journal {\bf 1  no.~6} 445--466, 1961


   \bibitem{GLR2024}
   B.F.F.~Gon\c calves, I.S.~Labouriau, A.A.P.~Rodrigues,
   {\sl Bifurcations and canards in the {FitzHugh--Nagumo} system: a tutorial of fast-slow dynamics}, 
   International Journal of Bifurcation and Chaos {\bf 8 no.~35} 2350017, 2025


   \bibitem{Guckenheimer2025}
   J.~Guckenheimer,
   {\sl The legacy of the Cartwright-Littlewood collaboration},
   arXiv 2506.06889v1 [math.DS], 2025


   \bibitem{GuckenheimerHaduc}
   J.~Guckenheimer, R.~Haiduc,
   {\sl Canards at folded nodes},
   Moscow Mathematical Journal {\bf 5 no.~1} 91--103, 2005


   \bibitem{Guckenheimer}
   J.~Guckenheimer,
   {\sl Return maps of folded nodes and folded saddles},
   Chaos {\bf 8} 015108, 2008


   \bibitem{HoffEtAl2014}
   A. Hoff, J.V. dos Santos, C. Manchein,  H.A. Albuquerque,
   {\sl Numerical bifurcation analysis of two coupled FitzHugh--Nagumo oscillators},
   Eur. Phys. J. B {\bf 87}: 151, 2014


   \bibitem{KawatoEtAl1979}
   M. Kawato, M. Sokabe, M. Suzuki,
   {\sl Synergism and Antagonism of Neurons Caused by an Electrical Synapse},
   Biol. Cybernetics {\bf 34} 81--89, 1979


   \bibitem{KristiansenPedersen}
   K.U. Kristiansen, M.G. Pedersen,
   {\sl Mixed-mode oscillations in coupled FitzHugh--Nagumo oscillators: blow-up analysis of cusped singularities},
   SIAM J. Appl. Dyn. Syst. {\bf 22 (2)} 1383--1422 2023


   \bibitem{Krupa2014}
   M. Krupa, B. Ambrosio, M.A. Aziz--Alaoui,
   {\sl Weakly coupled two--slow--two--fast systems, folded singularities and mixed mode oscillations},
   Nonlinearity {\bf 27} 1555--1574, 2014


   \bibitem{Krupa2012}
   M. Krupa, A. Vidal, M. Desroches,  F. Cl\'ement, 
   {\sl Mixed-mode oscillations in a multiple time scale phantom bursting system},
   SIAM J. Applied Dynamical Systems, {\bf 11(4)} 1458--1498,  2012


   \bibitem{Kuehn}
   C. Kuehn,
   {\sl Multiple time scale dynamics}, Springer-Verlag 2015


   \bibitem{Kulminskiy}
   D.D. Kulminskiy, V.I. Ponomarenko, M.D. Prokhorov, A.E. Hramov,
   {\sl Synchronization in ensembles of delay-coupled nonidentical
   neuronlike oscillators},
   Nonlinear Dyn {\bf 98} 735--748, 2019


   \bibitem{LabouriauR2003}
   I.S.~Labouriau, H.M.~ Rodrigues,
   {\sl Synchronization of coupled equations of Hodgkin-Huxley type}, 
   Dynamics of Continuous Discrete and Impulsive Systems --- Series A --- Mathematical Analysis {\bf 10}, 463--476, 2003


   \bibitem{Nagumo}
   J.~Nagumo, S.~Arimoto, and S.~Yoshizawa, 
   {\sl An active pulse transmission  line simulating nerve axon}, Proceedings of the IRE {\bf 50,no.~10}, 2061--2070, 1962


   \bibitem{PedersenEtal2022}
   M.G. Pedersen, M. Br\o{}ns, and M.P. S\o{}rensen, 
   {\sl Amplitude-modulated spiking as a novel route to bursting: Coupling-induced mixed- mode oscillations by symmetry breaking}, Chaos, {\bf 32}   013121, 2022


   \bibitem{Plotnikov}
   S.A. Plotnikov, A. Fradkov,
   {\sl On synchronization in heterogeneous FitzHugh--Nagumo neworks},
   Chaos Solitons \& Fractals {\bf 121} 85--91, 2019


   \bibitem{Ponomarenko}
   V.I. Ponomarenko, D.D. Kulminskii, M.D. Prokhorov,
   {\sl An experimental study of synchronization of nonidentical neuronlike oscillators
   with an adaptive delayed coupling},
   Technical Physics Letters {\bf 44} 761--764, 2018


   \bibitem{Saha2017}
   A. Saha, U. Feudel,
   {\sl Extreme events in FitzHugh--Nagumo oscillators coupled with two time delays},
   Phys. Rev. E, {\bf 95} 062219, 2017. 


   \bibitem{SantanaEtal2021}
   L. Santana, R.M. da Silva, H.A. Albuquerque, C. Manchein,
   {\sl Transient dynamics and multistability in two electrically interacting FitzHugh--Nagumo neurons}
   Chaos {\bf 31} 053107, 2021


   \bibitem{SW01}
   P. Szmolyan,   M.  Wechselberger,
   {\sl  Canards in $\RR^3$}, 
   Journal of Differential Equations {\bf 177 (2)}  419--453, 2001


   \bibitem{Tikhonov}
   A.N. Tikhonov,
   {\sl Systems of differential equations, containing small parameters at the derivatives} (in Russian),
   Math. Sbornik {\bf 31 (3)} 327--361, 1952


   \bibitem{Wasow}
   W. Wasow,
   {\sl Asymptotic expansions for ordinary differential equations},
   Dover, 1965
   

   \bibitem{yu2023canard} 
   Y. Yu, C. Zhang, Z. Chen,   Z. Zhang,
   {\sl Canard-induced mixed mode oscillations as a mechanism for the Bonhoeffer-van der Pol circuit under parametric perturbation},
   Circuit World, {\bf 49 (3)}, 361--368, 2023
\end{thebibliography}
\end{document}